\newtheorem{theorem}{Theorem}[section]
\newtheorem{corollary}[theorem]{Corollary}
\newtheorem{lemma}[theorem]{Lemma}
\newtheorem{proposition}[theorem]{Proposition}
\newtheorem{assume}{Assumption}
\theoremstyle{definition}
\newtheorem{definition}[theorem]{Definition}
\newtheorem{remark}[theorem]{Remark}
\newtheorem*{notation}{Notation}
\numberwithin{equation}{section}
\DeclareMathOperator{\AC}{AC}
\DeclareMathOperator{\CE}{CE}
\DeclareMathOperator{\CCEE}{\mathbb{CE}}
\DeclareMathOperator{\Expect}{E}
\DeclareMathOperator{\loc}{loc}
\DeclareMathOperator{\var}{var}
\DeclareMathOperator{\Div}{div}
\newcommand\bra[1]{\left({#1}\right)}
\newcommand\set[1]{\left\{#1\right\}}
\newcommand\abs[1]{\left|#1\right|}
\newcommand\skp[2]{\left\langle  #1 , #2 \right\rangle}
\newcommand{\vecfield}{\vec}
\newcommand{\doititle}[1]{\emph{#1}}
\newcommand{\arXiv}[1]{\emph{arXiv: #1}}
\newcommand{\R}{\mathbf{R}}
\newcommand{\N}{\mathbf{N}}
\newcommand{\eps}{\varepsilon}
\newcommand{\cA}{\mathcal{A}}
\newcommand{\cF}{\mathcal{F}}
\newcommand{\cH}{\mathcal{H}}
\newcommand{\cK}{\mathcal{K}}
\newcommand{\cI}{\mathcal{I}}
\newcommand{\cJ}{\mathcal{J}}
\newcommand{\cP}{\mathcal{P}}
\newcommand{\cT}{\mathcal{T}}
\newcommand{\cW}{\mathcal{W}}
\newcommand{\cX}{\mathcal{X}}
\newcommand{\be}{\bm{e}}
\newcommand{\bS}{\bm{S}}
\newcommand{\bx}{\bm{x}}
\newcommand{\bX}{\bm{X}}
\newcommand{\by}{\bm{y}}
\newcommand{\EX}{\mathbb{E}}
\renewcommand{\AA}{\mathbb{A}}
\newcommand{\CC}{\mathbb{C}}
\newcommand{\FF}{\mathbb{F}}
\newcommand{\GG}{\mathbb{G}}
\newcommand{\II}{\mathbb{I}}
\newcommand{\JJ}{\mathbb{J}}
\newcommand{\MM}{\mathbb{M}}
\newcommand{\NN}{\mathbb{N}}
\newcommand{\PPsi}{\mathbb{\Psi}}
\renewcommand{\SS}{\mathbb{S}}
\newcommand{\VV}{\mathbb{V}}
\newcommand{\vv}{\mathbb{v}}
\newcommand{\WW}{\mathbb{W}}
\date{\today}
\title[Discrete mean-field gradient flows]{Gradient flow structure for McKean-Vlasov equations on discrete spaces}
\subjclass[2010]{Primary: 60J27; Secondary: 34A34, 49J40, 49J45}
\keywords{Gradient flow structure,
weakly interacting particles systems,
nonlinear Markov chains,
McKean-Vlasov dynamics,
mean-field limit,
evolutionary Gamma convergence,
transportation metric}
\author{Matthias Erbar}
\address{University of Bonn, Germany}
\email{erbar@iam.uni-bonn.de}
\author{Max Fathi}
\address{University of California, Berkeley}
\email{maxf@berkeley.edu}
\author{Vaios Laschos}
\address{Weierstrass Institute}	
\email{Vaios.laschos@wias-berlin.de}
\author{Andr\'e Schlichting}
\address{University of Bonn, Germany}
\email{schlichting@iam.uni-bonn.de}
\begin{document}

\begin{abstract} In this work, we show that a family of non-linear
  mean-field equations on discrete spaces can be viewed as a gradient
  flow of a natural free energy functional with respect to a certain
  metric structure we make explicit. We also prove that this gradient
  flow structure arises as the limit of the gradient flow structures
  of a natural sequence of $N$-particle dynamics, as $N$ goes to
  infinity.
\end{abstract}

\maketitle

\section{Introduction}
In this work, we are interested in the gradient flow structure of
McKean-Vlasov equations on finite discrete spaces.
They take the form
\begin{equation}\label{eq_evol_nonlin}
\dot{{c}}(t)={c}(t)Q({c}(t))
\end{equation}
where ${c}(t)$ is a flow of probability measures on a fixed finite set
$\mathcal{X}=\{1,\dots,d\}$, and $Q_{xy}(\mu)$ is collection of
parametrized transition rates, that is for each $\mu\in\cP(\cX)$,
$Q(\mu)$ is a Markov transition kernel.

Such non-linear equations arise naturally as the scaling limit for the
evolution of the empirical measure of a system of $N$ particles
undergoing a linear Markov dynamics with mean field interaction. Here
the interaction is of mean field type if the transition rate
$Q^N_{i;x,y}$ for the $i$-th particle to jump from site $x$ to $y$
only depends on the empirical measure of the particles.

Mean-field systems are commonly used in physics and biology to model
the evolution of a system where the influence of all particles on a
tagged particle is the average of the force exerted by each particle
on the tagged particle.  In the recent work \cite{Budhiraja2014b}, it
was shown that whenever $Q$ satisfies suitable conditions a free
energy of the form
\begin{equation}\label{e:intro:FreeEnergy}
\mathcal{\cF}(\mu)=\sum_{x\in \cX} \mu_{x} \log \mu_{x} + \sum_{x\in \cX} \mu_{x} K_{x}(\mu)
\end{equation}
for some appropriate potential $K:\cP(\cX)\times \cX \to \R$ (see
Definition~\ref{def:GibbsPotential}) is a Lyapunov function for the
evolution equation \eqref{eq_evol_nonlin}, i.e. it decreases along the
flow.

In this work, we show that this monotonicity is actually a consequence
of a more fundamental underlying structure. Namely, we exibit a novel
geometric structure on the space of probability measure $\cP(\cX)$
that allows to view the evolution equation \eqref{eq_evol_nonlin} as
the gradient flow of the free energy $\cF$.

This gradient flow structure is a natural non-linear extension of the
discrete gradient flow structures that were discovered in
\cite{Maas2011} and \cite{Mielke2013} in the context of linear
equations describing Markov chains or more generally in
\cite{Erbar2014} in the context of L\'evy processes.

Moreover, we shall show that our new gradient flow structure for the
non-linear equation arises as the limit of the gradient flow structures
associated to a sequence of mean-field $N$ particle Markov chains. As
an application, we use the stability of gradient flows to show
convergence of these mean-field dynamics to solutions of the
non-linear equation \eqref{eq_evol_nonlin}, see Theorem
\ref{thm:PStoMF}.

\subsection{Gradient flows in spaces of probability measures}

Classically, a gradient flow is an ordinary differential equation of the form
\[
  \dot{x}(t) = -\nabla \cF(x(t)).
\]
By now there exists an extensive theory, initiated by De Giorgi and his
collaborators \cite{DeGiorgi1980}, giving meaning to the notion of
gradient flow when the curve $x$ takes values in a metric space.

Examples of these generalized gradient flows are the famous results by
Otto \cite{Jordan1998,Otto2001} stating that many diffusive partial
differential equations can be interpreted as gradient flows of
appropriate energy functionals on the space of probability measures on
$\R^d$ equipped with the $L^2$ Wasserstein distance. These include the
Fokker--Planck and the pourous medium equations. An extensive
treatment of these examples in the framework of De Giorgi was
accomplished in \cite{Ambrosio2008}.

Gradient flow structures allow to better understand
the role of certain Lyapunov functionals as thermodynamic free
energies. Recently, also unexpected connections of gradient flows to large deviations
have been unveiled \cite{Adams2011} \cite{Dirr2012},
\cite{Duong2013b}, \cite{Erbar2015}, \cite{Fathi2014}.

Since the heat equation is the PDE that governs the evolution of the
Brownian motion, a natural question was whether a similar structure
can be uncovered for reversible Markov chains on discrete spaces. This
question was answered positively in works of Maas \cite{Maas2011} and
Mielke \cite{Mielke2013}, which state that the evolution equations
associated to reversible Markov chains on finite spaces can be
reformulated as gradient flows of the entropy (with respect to the
invariant measure of the chain) for a certain metric structure on the
space of probability measures over the finite space. In
\cite{ErbarMaas2014}, a gradient flow structure for discrete porous
medium equations was also uncovered, based on similar ideas.

In Section 2, we shall highlight a gradient flow structure for
\eqref{eq_evol_nonlin}, which is a natural non-linear
generalization of the structure discovered in \cite{Maas2011} and
\cite{Mielke2013} for such non-linear Markov processes.
This structure explains why the non-linear entropies of
\cite{Budhiraja2014b} are Lyapunov functions for the non-linear ODE.
Moreover, we shall show in Section 3 that this structure is compatible
with those of \cite{Maas2011} and \cite{Mielke2013}, in the sense that
it arises as a limit of gradient flow structures for $N$-particle
systems as $N$ goes to infinity.

\subsection{Convergence of gradient flows}

Gradient flows have proven to be particularly useful for the study of convergence of sequences of evolution
equations to some limit since they provide a very rigid structure.
Informally, the philosophy can be summarized as follows: consider a sequence of
gradient flows, each associated to some energy functional
$\mathcal{F}^{N}$ and some metric structure. If the sequence
$\mathcal{F}^{N}$ converges in some sense to a limit
$\mathcal{F}^{\infty}$ and if the metric structures converge to some
limiting metric, then one would expect the sequence of gradient flows
to converge to a limit that can be described as a gradient flow of
$\mathcal{F}^{\infty}$ for the asymptotic metric structure.

There are several ways of rigorously implementing this philosophy
to actually prove convergence in concrete situations. The one we shall
be using in this work is due to Sandier and Serfaty in
\cite{Sandier2004}, and was later generalized
in~\cite{Serfaty2011}. Other methods, based on discretization schemes,
have been developed in~\cite{AmbrosioSavareZambotti2009}
and~\cite{Daneri2010}. See also the recent survey~\cite{Mielke2014} for an
extension of the theory to generalized gradient systems.
In the context of diffusion equations,
arguments similar to those of~\cite{Serfaty2011} have been used
in~\cite{Fathi2014} to study large deviations.

In the discrete setting, we can combine the framework
of~\cite{Maas2011} and~\cite{Mielke2013} with the method
of~\cite{Serfaty2011} to study scaling limits of Markov chains on
discrete spaces. In this work, we shall use this method to study
scaling limits of $N$-particle mean-field dynamics on finite
spaces. While the convergence statement could be obtained
through more classical techniques, such as those of
\cite{Oelschlager84,Sznitman1989}, our focus here is on justifying
that the gradient flow structure we present is the natural one,
since it arises as the limit of the gradient flow structures for the
$N$-particle systems.

While we were writing this article, we have been told by Maas and
Mielke that they have also successfully used this technique to study
the evolution of concentrations in chemical reactions. We also mention
the work \cite{GigliMaas}, which showed that the metric associated to
the gradient flow structure for the simple random walk on the discrete
torus $\mathbf{Z}/N\mathbf{Z}$ converges to the Wasserstein structure
on $\cP(\mathbf{T})$, establishing compatibility of the discrete and
continuous settings in a typical example. The technique can also be
used to prove convergence of interacting particle systems on lattices,
such as the simple exclusion process (see \cite{FathiSimon2015}). The
technique is not restricted to the evolution of probability measures
by Wasserstein-type gradient flows, but can be also applied for
instance to coagulation-fragmentation processes like the
Becker-D\"oring equations, where one can prove macroscopic limits (see
\cite{Schlichting2016}).

\subsection{Continuous mean-field particle dynamics}

Let us briefly compare the scaling limit for discrete
mean-field dynamics considered in this paper with the more classical
analogous scaling limit for particles in the continuum decribed by
McKean-Vlasov equations.

$N$-particle mean-field dynamics describe the behavior of $N$
particles given by some spatial positions $x_1(t),\dots,x_N(t)$, where
each particle is allowed to interact through the empirical measure of
all other particles.

In nice situations, when the number of particles goes to infinity, the
\emph{empirical measure} of the system $\frac{1}{N}\sum
\delta_{x_i(t)}$ converges to some probability measure $\mu(t)$, whose
evolution is described by a McKean-Vlasov equation. In the continuous
setting, with positions in~$\R^d$, this can be for example a PDE of
the form
\[
  \partial_t \mu(t) = \Delta\mu(t) + \operatorname{div}(\mu(t)(\nabla W *\mu(t)))
\]
where $\nabla W * \mu$ is the convolution of $\mu$ with an interaction
that derives from a potential~$W$. The according free energy in this
case is given by
\[
  \cF(\mu) = \begin{cases}\int \frac{d\mu}{dx}(x)\log \frac{d\mu}{dx}(x)\; dx + \frac{1}{2} \int \int W(x-y)\mu(dx)\mu(dy), & \mu \ll \mathcal{L},\\ \infty ,& \text{otherwise}, \end{cases}
\]
i.e.~formally $K_{x}(\mu) = \frac{1}{2} (W * \mu)(x)$
in~\eqref{e:intro:FreeEnergy}. More general PDEs, involving diffusion
coefficients and confinement potentials, are also possible. We refer
to \cite{Dobrushin1979,Sznitman1989} for more information on
convergence of $N$-particle dynamics to McKean-Vlasov equations. We
also refer to~\cite{Dawson1987, dPdH1996} for the large deviations
behavior. An important consequence of this convergence is that, for
initial conditions for which the particles are exchangeable, there is
\emph{propagation of chaos}: the laws of two different tagged particles
become independent as the number of particles goes to infinity
\cite[Proposition 2.2]{Sznitman1989}.

It has been first noted in \cite{Carrillo2003} that McKean-Vlasov
equations on $\R^d$ can be viewed as gradient flows of the free energy
in the space of probability measures endowed with the Wasserstein
metric. This fact has been useful in the study of the long-time
behavior of these equations
(cf.~\cite{Carrillo2003,Carrillo2006,Cattiaux2008,Malrieu2003} among
others). The study of long-time behavior of particle systems on finite
spaces has attracted recent interest (see for example
\cite{LevinLuczakPeres2010} for the mean-field Ising model), and we
can hope that curvature estimates for such systems may be useful to
tackle this problem, as they have been in the continuous
setting. Since lower bounds on curvature are stable, the study of
curvature bounds for the mean field limit (which is defined as
convexity of the free energy in the metric structure for which the
dynamics is a gradient flow, see for example \cite{EM11}) can shed
light on this problem. We leave this issue for future work. We must
also mention that Wasserstein distances have also been used to
\emph{quantify} the convergence of mean-field systems to their scaling
limit, see for example \cite{BolleyGuillinVillani2007}.

\subsection{Outline}
In Section~\ref{S:GF:MF}, we introduce the gradient flow structure of
the mean-field system~\eqref{eq_evol_nonlin} on discrete spaces.  In
Section~\ref{S:Limit}, we will obtain this gradient flow structure as
the limit of the linear gradient flow structure associated with an
$N$-particle system with mean-field interaction. The nonlinear
gradient flow structure comes with a metric, whose properties will be
studied in Section~\ref{S:metric}.  We close the paper with two
Appendices~\ref{S:stirling} and~\ref{S:variance}, in which auxiliary
results for the passage to the limit are provided.

\section{Gradient flow structure of mean-field systems on discrete spaces}\label{S:GF:MF}

In this section, we derive the gradient flow formulation for the
mean-field system~\eqref{eq_evol_nonlin}. First, we introduce the
metric concept of gradient flows in Section~\ref{sec:metric-gf}. Then
in Section~\ref{S:GF:discrete} we turn to the discrete setting. We
give the precise assumptions on the Markov transition kernel $Q(\mu)$
in the system~\eqref{eq_evol_nonlin} and state several necessary
definitions for later use. In Section~\ref{S:GF:CE}, we define curves
of probability measures via a continuity equation and associate to
them an action measuring their velocity. Based on this, we can
introduce in Section~\ref{S:GF:metric} a transportation distance on
the space of probability measure on $\cX$. The gradient flow
formulation is proven in Section~\ref{S:GF:GF} as curves of maximal
slope. Finally, the gradient structure is lifted to the space of
randomized probability measures in Section~\ref{S:GF:lift}, which is a
preparation for the passage to the limit.

\subsection{Gradient flows in a metric setting}\label{sec:metric-gf}

Let briefly recall the basic notions concerning gradient flows in
metric spaces. For an extensive treatment we refer to
\cite{Ambrosio2008}.

Let $(M,d)$ be a complete metric space. A curve $(a,b)\ni t \mapsto
{u}(t) \in M$ is said to be locally \emph{$p$-absolutely continuous} if there exists $m\in L^p_{\loc}((a,b))$ such that
\begin{equation}\label{e:def:ACcurve}
  \forall a \leq s < t \leq b: \qquad d({u}(s),{u}(t)) \leq \int_{s}^{t} m(r) \; dr .
\end{equation}
We write for short $u\in\AC^p_{\loc}\!\big((a,b),(M,d)\big)$. For any such curve the
metric derivative is defined by
\begin{align*}
  |{u}'(t)|=\lim_{s\rightarrow t}\frac{d({u}(s),{u}(t))}{|s-t|}.
\end{align*}
The limit exists for a.e.~$t\in(a,b)$ and is the smallest $m$ in
\eqref{e:def:ACcurve}, see \cite[Thm.~1.1.2]{Ambrosio2008}.

Now, let $\varPhi:M\to\R$ be lower semicontinuous function. The metric
analogue of the modulus of the gradient of $\varPhi$ is given by the
following definition.

\begin{definition}[Strong upper gradient]
  A function $G:M\rightarrow[0,\infty],$ is a
  strong upper gradient for $\varPhi$ if for every absolutely
  continuous curve $u:(a,b)\rightarrow M,$ the function $G(u)$ is
  Borel and
  \[|\varPhi(u(s))-\varPhi(u(t))|\leq\int_{s}^{t}G(u(r))|u'|(r)dr,
  \hspace{16pt}\forall a<s\leq t < b.\]
\end{definition}
\noindent By Young's inequality, we see that the last inequality implies that
\begin{align*}
    \varPhi(u(s))-\varPhi(u(t)) \leq \frac12\int_{s}^{t}|u'|^2(r)dr + \frac12\int_{s}^{t}G^2(u(r))dr\;,
\end{align*}
for any absolutely continuous curve $u$ provided the $G$ is a strong
upper gradient.

The following definition formalizes what it means for a curve to be a
gradient flow of the function $\varPhi$ in the metric space
$(M,d)$. Shortly, it is a curve that saturates the previous
inequality.
\begin{definition}[Curve of maximal slope]
  A locally absolutely continuous curve $u:(a,b)\to M$ is called a curve of
  maximal slope for $\varPhi$ with respect to its strong upper
  gradient $G$ if for all $a\leq s\leq t\leq b$ we have the energy
  identity
\begin{align}\label{eq:max-slope}
  \varPhi(u(s))-\varPhi(u(t)) = \frac12\int_s^t|u'|^2(r) d r + \frac12 \int_s^t G^2(u(r)) dr\;.
\end{align}
\end{definition}

When $\varPhi$ is bounded below one has a convenient estimate on the
modulus of continuity of a curve of maximal slope $u$. By H\"older's
inequality and \eqref{eq:max-slope} we infer that for all $s< t$ we
have
\begin{align*}
   d(u(s),u(t)) &\leq \int_s^t|u'|(r) dr \leq \sqrt{t-s}\;\left(\int_s^t|u'|^2(r)dr\right)^{\frac12}\\
                &\leq \sqrt{t-s}\; \sqrt{2 \big(\varPhi\big(u(0)\big)-\varPhi_{\min}\big)}\;.
\end{align*}

\subsection{Discrete setting}\label{S:GF:discrete}

Let us now introduce the setting for the discrete McKean--Vlasov
equations that we consider.

In the sequel, we will denote with $\cP(\cX)$, the space of
probability measures on $\cX$, and $\cP^{*}(\cX)$ the set of all
measures that are strictly positive, i.e.
\[
\mu\in\cP^{*}(\cX) \qquad \text{iff} \qquad \forall x\in\cX: \mu_{x}>0,
\]
and finally with $\cP^{a}(\cX),$ the set of all measures that have
everywhere mass bigger than~$a$, i.e.
\[
\mu\in\cP^{a}(\cX) \qquad \text{iff} \qquad \forall x\in\cX: \mu_{x}\geq a.
\]
As in ~\cite{Budhiraja2014a,Budhiraja2014b}, we shall consider
equations of the form \eqref{eq_evol_nonlin} where $Q$ is Gibbs with
some potential function $K$. Here is the definition of such transition
rates, taken from \cite{Budhiraja2014b}:
\begin{definition}\label{def:GibbsPotential}
  Let $K:\mathcal{P}(\mathcal{X})\times\mathcal{X}\rightarrow\R$ be
  such that for each $x\in \mathcal{X},
  K_{x}:\mathcal{P}(\mathcal{X})\rightarrow\R$ is a twice continuously
  differentiable function on $\mathcal{P}(\mathcal{X})$. A family of
  matrices~$\{Q(\mu)\in \R^{\cX\times \cX}\}_{\mu\in\cP(\cX)}$ is
  Gibbs with potential function $K$, if for each $\mu\in \cP(\cX)$,
  $Q(\mu)$ is the rate matrix of an irreducible, reversible ergodic
  Markov chain with respect to the probability measure
  \begin{equation}\label{e:def:piH}
    \pi_{x}(\mu) = \frac{1}{Z(\mu)}\exp(-H_{x}(\mu))\;,
  \end{equation}
  with 
  \begin{equation}\nonumber
      H_{x}(\mu) = \frac{\partial}{\partial \mu_x}U(\mu)\;,\hspace{4pt}\hspace{4pt} U(\mu)=\underset{x \in \cX}{\sum} \; \mu_x K_x(\mu)\;.
  \end{equation}
  In particular $Q(\mu)$ satisfies the detailed balance condition
  w.r.t.~$\pi(\mu)$, that is for all~$x,y\in \cX$
  \begin{equation}\label{e:DBC}
    \pi_{x}(\mu) Q_{xy}(\mu) = \pi_{y}(\mu) Q_{yx}(\mu)
  \end{equation}
  holds. Moreover, we assume that for each $x,y\in \cX$ the map
  $\mu\mapsto Q_{xy}(\mu)$ is Lipschitz continuous over~$\cP(\cX)$.
\end{definition}
In the above definition and in the following we use the convention that a function $F(\cdot): \cP(\cX)
\to \R$ is regular if and only if it can be extended
in an open neighborhood of $\cP(\cX) \subset \R^{d}$ in which it is
regular in the usual sense. Hereby, regular could be continuous, Lipschitz or differentiable. In particular, we use this for the (twice) continuously differentiable function $K_x$ and the Lipschitz continuous function $Q_{xy}$ from above.
\begin{remark}\label{rem:MarkovKernel}
  There are many ways of building a Markov kernel that is reversible
  with respect to a given probability measure. The most widely used
  method is the Metropolis algorithm, first introduced in
  \cite{MRRTT53}:
  \begin{equation*}
    Q_{xy}^{\operatorname{MH}}(\mu) := \min\left(\frac{\pi_{y}(\mu)}{\pi_{x}(\mu)},1 \right) = e^{-\bra{H_{y}(\mu) - H_{x}(\mu)}_+} , \quad\text{with}\quad (a)_+ := \max\set{0, a} .
  \end{equation*}
  By this choice of the rates it is only necessary
  to calculate $H(\mu)$ and not the partition sum~$Z(\mu)$
  in~\eqref{e:def:piH}, which often is a costly computational
  problem.

  A general scheme for obtaining rates satisfying the detailed balance
  condition with respect to $\pi$ in~\eqref{e:def:piH} is to consider
  \begin{equation*}
    Q_{xy}(\mu) = \frac{\sqrt{\pi_{y}(\mu)}}{\sqrt{\pi_{x}(\mu)}} \ A_{xy}(\mu) ,
  \end{equation*}
  where $\set{A(\mu)}_{\mu\in \cP(\cX)}$ is a family of irreducible symmetric matrices. If we choose
  $A_{xy}(\mu) = \alpha_{x,y} \min
  \left(\frac{\sqrt{\pi_{y}(\mu)}}{\sqrt{\pi_{x}(\mu)}},\frac{\sqrt{\pi_{x}(\mu)}}{\sqrt{\pi_{y}(\mu)}}\right)$
  with $\alpha \in \set{0,1}^{\cX\times \cX}$ an irreducible symmetric adjacency matrix, we recover
  the Metropolis algorithm on the corresponding graph.
\end{remark}

We will be interested in the non-linear evolution equation
\begin{equation}\label{e:main-eq}
\dot{c}_x(t) = \sum_{y\in\cX}{c}_y(t)Q_{yx}({c}(t))\; ,
\end{equation}
with the convention $Q_{xx}(\mu) = -\sum_{y\ne x} Q_{xy}(\mu)$.
By the Lipschitz assumption on $Q$ this equation has a unique solution.

One goal will be to express this evolution as the gradient flow of the
associated free energy functional $\cF:\cP(\cX)\to\R$ defined by
\begin{equation}\label{e:def:MF:FreeEnergy}
  \cF(\mu):=\sum_{x\in\mathcal{X}}\mu_{x}\log\mu_{x} + U(\mu),
  \qquad\text{with}\qquad
  U(\mu):=\sum_{x\in\mathcal{X}}\mu_{x}K_{x}(\mu)\;.
\end{equation}

To this end, it will be convenient to introduce the so-called
\emph{Onsager operator} $\cK(\mu):\R^\cX\to\R^\cX$.
It is defined as follows:

Let $\Lambda:\R_{+}\times\R_{+}\rightarrow
\R_{+},$ denote the \emph{logarithmic mean} given by
\begin{align*}
  \Lambda(s,t):=\int_0^1s^\alpha t^{1-\alpha} \,d\alpha = \frac{s-t}{\log s - \log t} \; .
\end{align*}

$\Lambda$ is continuous, increasing in both variables, jointly concave
and $1$-homogeneous. See for example \cite{Maas2011, EM11} for more
about properties of this logarithmic mean. In the sequel we are going
to use the following notation
\begin{equation}\label{e:MF:weights}
w_{xy}(\mu) := \Lambda(\mu_{x}Q_{xy}(\mu),\mu_{y}Q_{yx}(\mu))
\end{equation}
since this term will appear very often. By the definition of $\Lambda$
and the properties of $Q$ we get that $w_{xy}$ is uniformly bounded on
$\cP(\cX),$ by a constant $C_{w}$.

Now, we can define
\begin{equation}\label{e:def:OnsagerOperator}
  \cK(\mu) := \frac{1}{2}\sum_{x,y}w_{xy}(\mu)\; \bra{e_x - e_y} \otimes \bra{e_x - e_y} ,
\end{equation}
where $\set{e_x}_{x\in \cX}$ is identified with the standard basis of
$\R^\cX$. More explicitly, we have for $\psi\in\R^\cX$:
\begin{align*}
  \big(\cK(\mu)\psi\big)_x = \sum_yw_{xy}(\mu)\big(\psi_x-\psi_y\big)\;.
\end{align*}

With this in mind, we can formally rewrite the evolution \eqref{e:main-eq}
in gradient flow form:
\begin{equation}\label{e:GF:KDF}
  \dot{{c}}(t) = - \cK({c}(t)) D\cF({c}(t)),
\end{equation}
where $D\cF(\mu)\in\R^\cX$ is the differential of $\cF$ given by
$D\cF(\mu)_x=\partial_{\mu_x}\cF(\mu)$.

Finally, let us introduce the \emph{Fisher information}
$\cI:\cP(\cX)\rightarrow[0,\infty]$ defined for $\mu\in\cP^*(\cX)$ by
  \begin{equation}\label{fisher}
    \mathcal{I}(\mu) := \frac{1}{2} \sum\limits_{(x,y)\in E_{\mu}} w_{xy}(\mu)\left(\log(\mu_{x}Q_{xy}(\mu))-\log(\mu_{y}Q_{yx}(\mu))\right)^2
  \end{equation}
where, for $\mu\in\cP(\cX)$, we define the edges of possible jumps by
\begin{equation}\label{e:def:allowsTrans}
    E_\mu := \set{ (x,y)\in \cX\times \cX : Q_{xy}(\mu) > 0 } .
\end{equation}
For $\mu\in\cP(\cX)\setminus\cP^*(\cX)$ we set $\cI(\mu)=+\infty$.

$\cI$ gives the dissipation of $\cF$ along the evolution, namely, if
$c$ is a solution to \eqref{e:main-eq} then
\begin{align*}
  \frac{d}{dt}\cF(c(t)) = - \cI(c(t))\;.
\end{align*}

\subsection{Continuity equation and action}\label{S:GF:CE}

In the sequel we shall use the notation for the discrete
gradient. Given a function $\psi\in\R^{\cX}$ we define
$\nabla\psi\in\R^{\cX\times\cX}$ via $\nabla_{xy}\psi:=\psi_y-\psi_x$
for $x,y\in\cX$. We shall also use a notion of discrete divergence,
given $v\in\R^{\cX\times\cX}$, we define $\delta v\in\R^\cX$ via
$(\delta v)_x=\frac12\sum_y(v_{xy}-v_{yx})$.

\begin{definition}[Continuity equation]\label{def:MF:continuity_equ}
  Let $T>0$ and $\mu,\nu\in \cP(\cX)$. A pair $({c},v)$ is called a
  solution to the continuity equation, for short $(c,v)\in
  \vecfield\CE_T(\mu,\nu)$, if
  \begin{enumerate}[ (i) ]
  \item ${c} \in C^0([0,T],\cP(\cX))$, i.e.\ $\forall x\in \cX:$ $t\mapsto c_x(t) \in C^0([0,T],[0,1])$;
  \item ${c}(0) = \mu ;\, {c}(T)=\nu$;
  \item $v:[0,T]\rightarrow \R^{\cX\times\cX}$ is
    measurable and integrable;
  \item The pair $({c}, v)$ satisfies the continuity equation
    for $t\in (0,T)$ in the weak form, i.e.~for all $\varphi\in
    C^1_c((0,T),\R)$ and all $x\in \cX$ holds
    \begin{equation} \label{e:MF:continuity_equ:weak}\begin{split}
        &\int_0^T\Big[ \dot\varphi(t)\; {c}_{x}(t)- \varphi(t)\;(\delta v)_x(t)\Big]dt = 0 .
      \end{split}\end{equation}
  \end{enumerate}
  In a similar way, we shall write $({c},\psi)\in \CE_T(\mu,\nu)$ if
  $(c,w(c)\nabla\psi)\in \vecfield\CE_T(\mu,\nu)$ for $\psi:
  [0,T]\to \R^\cX$ and $(w(c)\nabla \psi)_{xy}(t) := w_{xy}(c(t))
  \nabla_{xy}\psi(t)$ defined pointwise. In the case $T=1$ we will
  often neglect the time index in the notation setting
  $\vecfield\CE(\mu,\nu):= \vecfield\CE_1(\mu,\nu)$. Also, the
  endpoints $(\mu,\nu)$ will often be suppressed in the notation.
\end{definition}
To define the action of a curve it will be convenient to introduce the
function $\alpha:\R\times\R_{+}\to\R_{+}$ defined by
\begin{equation}\label{e:def:alpha}
  \alpha(v,w) := \begin{cases}
    \frac{v^2}{w} &, w>0 \\
    0 &, v=0=w \\
    +\infty &, \text{else }
  \end{cases} \ \ .
\end{equation}
Note that $\alpha$ is convex and lower semicontinuous.
\begin{definition}[Curves of finite action]
  Given $\mu\in\cP(\cX)$, $v\in\R^{\cX\times\cX}$ and $\psi\in\R^\cX$, we define the
  \emph{action} of $(\mu,v)$ and $(\mu,\psi)$ via
  \begin{align}\label{e:def:MF:action:vec}
    \vecfield\cA(\mu,v) &:=\frac{1}{2}\sum_{x,y}  \alpha\!\bra{v_{xy},w_{xy}(\mu)}\;,\\
    \cA(\mu,\psi) &:= \vecfield \cA(\mu,w(\mu)\nabla\psi) = \frac{1}{2} \sum_{x,y} \bra{\psi_y - \psi_x}^2 w_{xy}(\mu)\;.\notag
  \end{align}
  Moreover, a solution to the continuity equation $(c,v)\in \CE_T$
  is called \emph{a curve of finite action} if
  \begin{equation*}
    \int_0^T \vecfield\cA({c}(t),v(t)) \; dt < \infty\;.
  \end{equation*}
\end{definition}
It will be convenient to note that for a given solution $(c,v)$ to the
continuity equation we can find a vector field $\tilde v=w(c)\nabla\psi$ of
gradient form such that $(c,\tilde v)$ still solves the continuity equation
and has lower action.
\begin{proposition}[Gradient fields]\label{prop:Gradient fields}
  Let $({c},v)\in \vecfield\CE_T(\mu,\nu)$ be a curve of finite
  action, then there exists $\psi : [0,T]\to \mathbb{R}^{\cX}$
  measurable such that $({c},\psi)\in \CE_T(\mu,\nu)$ and
  \begin{equation}\label{e:GradientFields:Ainequ}
    \int_0^T \cA({c}(t),\psi(t))  \; dt \leq \int_0^T \vecfield\cA({c}(t),v(t))  \; dt.
  \end{equation}
\end{proposition}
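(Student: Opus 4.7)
The plan is to construct $\psi(t)$ pointwise for a.e.\ $t\in[0,T]$ by orthogonally projecting the vector field $v(t)$ onto gradient fields in a $c(t)$-dependent weighted inner product, and to verify Borel measurability of the resulting selection.

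As a preliminary reduction, I would antisymmetrize $v$: the field $\tilde v_{xy}:=(v_{xy}-v_{yx})/2$ has the same divergence as $v$, so $(c,\tilde v)\in\vec\CE_T(\mu,\nu)$, while convexity and evenness of $\alpha(\cdot,w)$ combined with the symmetry $w_{xy}(\mu)=w_{yx}(\mu)$ (inherited from $\Lambda$) yield $\vec\cA(\mu,\tilde v)\le\vec\cA(\mu,v)$ upon pairing the $(x,y)$ and $(y,x)$ summands. Hence I may assume $v(t)$ is antisymmetric for a.e.\ $t$.

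Next, for each fixed $t$, I would solve the equation $\cK(c(t))\psi(t)=-\delta v(t)$ by a Hilbert space projection. Let $E^*(t):=\{(x,y):w_{xy}(c(t))>0\}$; the finite-action assumption forces $v_{xy}(t)=0$ off $E^*(t)$. On the space of antisymmetric fields supported in $E^*(t)$ equip the inner product
\[
  \langle v,v'\rangle_{c(t)}:=\tfrac12\sum_{(x,y)\in E^*(t)}\frac{v_{xy}\,v'_{xy}}{w_{xy}(c(t))},
\]
so that $\vec\cA(c(t),\cdot)=\tfrac12\|\cdot\|_{c(t)}^2$. Gradient fields $\{w(c(t))\nabla\psi:\psi\in\R^\cX\}$ form a closed subspace, and summation by parts (using antisymmetry of $v$) gives the identity
\[
  \langle v,w(\mu)\nabla\psi\rangle_\mu=-\sum_{x}\psi_x(\delta v)_x,
\]
showing that the orthogonal complement of this subspace is exactly the set of divergence-free antisymmetric fields. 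Consequently, orthogonal projection of $v(t)$ onto gradient fields both preserves its divergence and does not increase its $\|\cdot\|_{c(t)}$-norm, producing some $\psi(t)$ with $\cA(c(t),\psi(t))\le\vec\cA(c(t),v(t))$. The compatibility condition $\sum_{x\in C}(\delta v(t))_x=0$ on each connected component $C$ of $(\cX,E^*(t))$, needed for solvability, is automatic from antisymmetry of $v(t)$ and its vanishing outside $E^*(t)$.

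The main obstacle is the measurable dependence on $t$, since $\ker\cK(c(t))$ may jump with the connectivity of $(\cX,E^*(t))$. I would handle this by the canonical choice
\[
  \psi(t):=-\cK(c(t))^+\,\delta v(t),
\]
where $\cK(\mu)^+$ denotes the Moore--Penrose pseudo-inverse. Since $\mu\mapsto\cK(\mu)$ is continuous on $\cP(\cX)$ (by continuity of $\mu\mapsto w_{xy}(\mu)$), since $c(\cdot)$ and $v(\cdot)$ are Borel, and since the pseudo-inverse of a symmetric matrix is a Borel function of its entries (e.g.\ as the pointwise limit of the continuous functions $A\mapsto(A^2+\varepsilon I)^{-1}A$ as $\varepsilon\downarrow0$), it follows that $\psi(\cdot)$ is Borel measurable. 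Integrating the pointwise action inequality over $[0,T]$ yields \eqref{e:GradientFields:Ainequ}, while the divergence identity $\delta(w(c(t))\nabla\psi(t))=\delta v(t)$ shows $(c,\psi)\in\CE_T(\mu,\nu)$, completing the proof.
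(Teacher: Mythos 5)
Your proof is correct and follows essentially the same route as the paper's: both construct $\psi(t)$ for a.e.\ $t$ as the orthogonal projection of $v(t)$ onto gradient fields with respect to a $c(t)$-weighted inner product, using the decomposition of $\R^{\cX\times\cX}$ into gradient fields and divergence-free fields. Your preliminary antisymmetrization of $v$ is harmless but not necessary, since the projection onto $\mathrm{Ran}(\nabla)$ automatically discards the symmetric part. What you genuinely add is an explicit argument for Borel measurability of $t\mapsto\psi(t)$ via the Moore--Penrose pseudo-inverse and its Tikhonov approximation $\cK^+=\lim_{\eps\downarrow 0}(\cK^2+\eps I)^{-1}\cK$; the paper's proof leaves this point implicit, and it is not entirely trivial because the rank of $\cK(c(t))$ can jump as the support of $w(c(t))$ changes with $t$. (One small normalization slip: with your inner product $\langle v,v\rangle_{c(t)}=\tfrac12\sum v_{xy}^2/w_{xy}$ one has $\vec\cA(c(t),v)=\|v\|_{c(t)}^2$, not $\tfrac12\|v\|_{c(t)}^2$; this does not affect the argument.)
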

\begin{proof}
  Given ${c}\in\cP(\cX)$ we will endow $\R^{\cX\times\cX}$ with the
  weighted inner product
  \begin{align*}
    \langle\Psi,\Phi\rangle_{\mu}:=\frac12\sum_{x,y}\Psi_{xy}\Phi_{xy}w_{xy}({\mu})\;,
  \end{align*}
  such that $\vec\cA({\mu},v)=|\Psi|_{\mu}^2$ if $v_{xy}=w_{xy}\Psi_{xy}$. Denote by
  $\mathrm{Ran}(\nabla):=\{\nabla\psi:\psi\in\R^\cX\}\subset\R^{\cX\times\cX}$
  the space of gradient fields. Moreover, denote by
  $$\mathrm{Ker}(\nabla^*_{\mu}):=\left\{\Psi\in\R^{\cX\times\cX}:\sum_{x,y}(\Psi_{yx}-\Psi_{xy})w_{xy}({\mu})=0\right\}$$
  the space of divergence free vector fields. Note that we have the
  orthogonal decomposition
  \begin{align*}
    \R^{\cX\times\cX}=\mathrm{Ker}(\nabla^*_{\mu})\oplus^\perp \mathrm{Ran}(\nabla)\;.
  \end{align*}
  Now, given $({c},v)\in \vecfield\CE_T(\mu,\nu)$, we have
  $\vec\cA(c(t),v(t))<\infty$ for a.e.~$t\in [0,T]$. Thus, from
  ~\eqref{e:def:alpha} we see that for a.e.~$t$ and all $x,y$ we have
  that $v_{xy}(t)=0$ whenever $w_{xy}(c(t))=0$. Hence, we can define
  \begin{align*}
    \Psi_{xy}(t) := \frac{v_{xy}(t)}{w_{xy}(c(t))}\qquad\text{for a.e.~$t\in[0,T]$.}
  \end{align*}
  Then $\psi:[0,T]\to\R^\cX$ can be given by setting $\nabla\psi(t)$ to be the
  orthogonal projection of $\Psi(t)$ onto $\mathrm{Ran}(\nabla)$
  w.r.t.~$\langle\cdot,\cdot\rangle_{{c}(t)}$. The orthogonal
  decomposition above then implies immediately that
  $({c},w\nabla\psi)\in\vecfield\CE_T(\mu,\nu)$ and that
  $|\nabla\psi(t)|^2_{{c}(t)}\leq |\Psi(t)|^2_{{c}(t)} = \vec \cA(c(t),v(t))$ for a.e.~$t\in[0,T]$. This yields~\eqref{e:GradientFields:Ainequ}.
\end{proof}
\subsection{Metric}\label{S:GF:metric}
We shall now introduce a new transportation distance on the space
$\cP(\cX)$, which will provide the underlying geometry for the gradient
flow interpretation of the mean field evolution equation
\eqref{eq_evol_nonlin}.
\begin{definition}[Transportation distance]\label{def:def:W}
  Given $\mu,\nu\in\cP(\cX)$, we define
  \begin{equation}\label{e:def:W}
    \cW^{2}(\mu, \nu) := \inf \set{ \int_0^1 \cA({c}(t), \psi(t)) \; dt : ({c},\psi)\in  \CE_{1}(\mu,\nu)} .
  \end{equation}
\end{definition}

\begin{remark}\label{rem:equiv-formulation}
  As a consequence of Proposition \ref{prop:Gradient fields} and the
  fact that for any $\mu\in\cP(\cX)$ and $\psi\in\R^\cX$ give rise to
  $v\in\R^{\cX\times\cX}$ via $v_{xy}=w_{xy}(\mu)\nabla_{xy}\psi$ such
  that $\cA(\mu,\psi)=\vec\cA(\mu,v)$ we obtain an equivalent
  reformulation of the function $\cW$:
  \begin{equation*}
    \cW^{2}(\mu, \nu) = \inf \set{ \int_0^1 \vec\cA({c}(t), v(t)) \; dt : ({c},v)\in  \vec\CE_{1}(\mu,\nu)} .
  \end{equation*}
\end{remark}
It turns out that $\cW$ is indeed a distance.
\begin{proposition} \label{prop_metric} The function $\cW$ defined in Definition
  \ref{def:def:W} is a metric and the metric space $(\cP(\cX),\cW)$
  is seperable and complete. Moreover, any two points
  $\mu,\nu\in\cP(\cX)$ can be joined by a constant speed
  $\cW$-geodesic, i.e.~there exists a curve $(\gamma_t)_{t\in[0,1]}$
  with $\gamma_0=\mu$ and $\gamma_1=\nu$ satisfying
  $\cW(\gamma_s,\gamma_t)=|t-s|\cW(\mu,\nu)$ for all $s,t\in[0,1]$.
\end{proposition}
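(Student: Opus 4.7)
The plan is to follow the now-classical strategy of Benamou--Brenier / Maas \cite{Maas2011} in four stages: verify the metric axioms, establish that $\cW$ controls (and is controlled by, on suitable sets) the Euclidean distance on $\cP(\cX)$, then bootstrap to completeness and separability, and finally prove existence of geodesics by the direct method.

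\textbf{Metric axioms.} Symmetry is immediate from time reversal: if $(c,v)\in\vecfield\CE(\mu,\nu)$, then $\tilde c(t):=c(1-t)$, $\tilde v(t):=-v(1-t)$ lies in $\vecfield\CE(\nu,\mu)$ with the same integrated action, and by Remark \ref{rem:equiv-formulation} the same holds for $\cW$. The triangle inequality follows by concatenation and reparametrization: given curves $(c^1,v^1)\in\vec\CE(\mu,\nu)$ on $[0,\tau]$ and $(c^2,v^2)\in\vec\CE(\nu,\rho)$ on $[\tau,1]$, the glued pair lies in $\vec\CE(\mu,\rho)$; optimizing over $\tau$ (after rescaling each piece by the standard $1$-homogeneity of the action in the velocity and time reparametrization) yields $\cW(\mu,\rho)\le \cW(\mu,\nu)+\cW(\nu,\rho)$. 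The non-trivial metric axiom is non-degeneracy, treated next.

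\textbf{Comparison with the $\ell^{1}$ distance.} Using that $w_{xy}\leq C_{w}$ uniformly on $\cP(\cX)$ and Cauchy--Schwarz applied via $|v_{xy}|\le\sqrt{\alpha(v_{xy},w_{xy})\,w_{xy}}$, any $(c,v)\in\vec\CE(\mu,\nu)$ with finite action satisfies, for every $x\in\cX$,
\begin{align*}
|c_x(t)-c_x(s)| \leq \int_s^t |(\delta v)_x(r)|\,dr \leq C \int_s^t \sqrt{\vec\cA(c(r),v(r))}\,dr,
\end{align*}
with $C=C(|\cX|,C_w)$. Integrating and applying Cauchy--Schwarz gives $\|\mu-\nu\|_{\ell^{1}} \leq C\,\cW(\mu,\nu)$, which proves $\cW(\mu,\nu)=0\Rightarrow\mu=\nu$ and shows that $\cW$-convergence implies convergence in $\cP(\cX)$ with its Euclidean topology. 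Conversely, using irreducibility of $Q(\mu)$ at an interior point and choosing a linear interpolation as a competitor (as in \cite{Maas2011}), one shows that on each sublevel set $\cP^{a}(\cX)$ the distance $\cW$ is finite and bounded above by a multiple of $\|\mu-\nu\|_{\ell^{1}}$, so the two topologies coincide on the interior.

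\textbf{Completeness and separability.} Separability is automatic since $\cP(\cX)$ sits inside a finite-dimensional Euclidean space. For completeness, let $(\mu^{n})$ be $\cW$-Cauchy; by the lower bound above it is $\ell^{1}$-Cauchy, hence converges to some $\mu\in\cP(\cX)$. To show $\cW(\mu^{n},\mu)\to 0$, one uses lower semicontinuity of the action: choose near-optimal curves $(c^{n,m},v^{n,m})$ joining $\mu^{n}$ to $\mu^{m}$, extract via the uniform equicontinuity estimate (from the $\ell^{1}$-bound above) a subsequence converging uniformly in $c$ and weakly in $v$, and apply joint lower semicontinuity of $(\mu,v)\mapsto\vec\cA(\mu,v)$, which holds because $\alpha$ is convex and lower semicontinuous and $w_{xy}$ is continuous in $\mu$.

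\textbf{Existence of geodesics.} Apply the direct method. Take a minimizing sequence $(c^{n},v^{n})\in\vec\CE(\mu,\nu)$ with $\int_{0}^{1}\vec\cA(c^{n},v^{n})\,dt\to\cW^{2}(\mu,\nu)$. The $\ell^{1}$-estimate shows the $c^{n}$ are uniformly $1/2$-H\"older continuous into $\cP(\cX)$, hence equicontinuous and pointwise bounded; by Arzel\`a--Ascoli a subsequence converges uniformly to some $c\in C([0,1],\cP(\cX))$ with $c(0)=\mu$, $c(1)=\nu$. The action bound forces $v^{n}$ to be bounded in $L^{1}([0,1];\R^{\cX\times\cX})$ and tight in the appropriate sense, so along a further subsequence $v^{n}\rightharpoonup v$ weakly; passing to the limit in the weak continuity equation \eqref{e:MF:continuity_equ:weak} yields $(c,v)\in\vec\CE(\mu,\nu)$, and lower semicontinuity of the integrated action certifies that $(c,v)$ is a minimizer. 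Finally, constant speed is obtained by the standard argument: the triangle inequality on the restrictions $c|_{[s,t]}$ combined with the minimizing property forces $\cW(c_{s},c_{t})=|t-s|\cW(\mu,\nu)$ after a monotone reparametrization.

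The main technical obstacle is the joint lower semicontinuity of $\vec\cA$ under the combination of uniform convergence of $c$ and weak convergence of $v$; this is where the convexity and lower semicontinuity of $\alpha$ in \eqref{e:def:alpha} together with the continuity of $\mu\mapsto w_{xy}(\mu)$ (itself a consequence of the Lipschitz assumption on $Q$ and the continuity of $\Lambda$) are essential.
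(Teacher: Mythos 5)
Your skeleton is essentially the paper's own: symmetry and triangle inequality via time reversal, concatenation and reparametrization (the paper packages the last step as Lemma~\ref{lemma:sqrt}); the two-sided comparison with the Euclidean norm (the paper's Lemma~\ref{lemma:bounds}); and existence of constant-speed geodesics via the direct method and a compactness/lower-semicontinuity argument (the paper's Theorem~\ref{thm:MF:compactness}). Your completeness argument is slightly more direct than the paper's---you use lower semicontinuity of the action and a diagonal extraction rather than first proving equivalence of topologies---and that variant works.

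There is, however, a genuine gap at the point where you write that $\cW$ is ``finite and bounded above by a multiple of $\|\mu-\nu\|_{\ell^1}$'' on $\cP^a(\cX)$, ``so the two topologies coincide on the interior.'' That is correct as far as it goes, but it leaves untouched exactly the hard part: showing that $\cW(\mu,\nu)<\infty$ when one of the endpoints has a vanishing coordinate, i.e.\ lies on the boundary of the simplex. The upper bound from Lemma~\ref{lemma:bounds} degenerates as $a\to 0$ (the constant $C_a$ blows up), so it gives no control there. Without finiteness on all of $\cP(\cX)$, $\cW$ is only an extended pseudometric, and moreover your separability claim (``automatic since $\cP(\cX)$ sits inside a finite-dimensional Euclidean space'') does not follow: you have established $\|\mu-\nu\|\lesssim\cW(\mu,\nu)$, i.e.\ that $\cW$-convergence implies Euclidean convergence, but for density of a countable Euclidean-dense set in the $\cW$-topology you need the \emph{opposite} implication near the boundary. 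This is precisely the content of the paper's Lemma~\ref{lemma:connection}, whose proof compares $\cW$ locally with the two-point-space distance of \cite[Lemma~3.14]{Maas2011} via an explicit $\operatorname{arctanh}$ integral. The paper even flags that this step is more delicate here than in the linear case, because the edge set $E_\mu$ on which $Q_{xy}(\mu)>0$ depends on $\mu$, so one must first freeze the rates at $Q(\mu)$ on a small ball and prove a two-sided comparison of the logarithmic-mean weights (the paper's inequality~\eqref{e:lem:connection:p1}) before the two-point reduction is available. Your proposal does not contain any substitute for this argument, and the geodesic-existence step (``take a minimizing sequence'') also silently presupposes it, since otherwise there need not be any competitor of finite action connecting a boundary point to anything else.

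A smaller remark: the constant-speed conclusion at the end needs the reparametrization identity of Lemma~\ref{lemma:sqrt} together with the characterization of the metric derivative in Proposition~\ref{accurves}; you gesture at ``the standard argument,'' which is fine, but be aware that this is where those two auxiliary results enter.
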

We defer the proof of this statement until Section~\ref{S:metric}. Let us give a
characterization of absolutely continuous curves w.r.t.~$\cW$.
\begin{proposition}\label{accurves}
  A curve ${c}:[0,T]\rightarrow \mathcal{P}(\cX)$ is absolutely
  continuous w.r.t.~$\cW$ if and only if there exists
  $\psi:[0,T]\times\cX\to\R$ such that $(c,\psi)\in\CE_T,$ and
  $\int_0^T\!\!\sqrt{ \cA({c}(t), \psi(t))} \, dt<\infty$. Moreover, we
  can choose $\psi$ such that the metric derivative of $c$ is given as
  $|{c}'(t)|=\sqrt{\mathcal{A}({c}(t),\psi(t))}$ for a.e.~$t$.
\end{proposition}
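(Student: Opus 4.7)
The plan is to prove the two directions separately, following the Benamou--Brenier-type characterization of AC curves in a transportation metric; compare~\cite[Thm.~5.17]{Ambrosio2008} and~\cite[Prop.~2.7]{Maas2011}.

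\emph{Sufficiency.} Suppose $(c,\psi)\in\CE_T$ and $m(t):=\sqrt{\cA(c(t),\psi(t))}\in L^1(0,T)$. For $s<t$, set $L:=\int_s^t m\,dr$, define $\tau(r):=\int_s^r m\,du$, reparametrize $c$ by the inverse of $\tau$ on the set $\{m>0\}$, and set $\hat\psi:=\psi/m$. Using that $\cA(\mu,\cdot)$ is quadratic, one checks $(\hat c,\hat\psi)\in\CE_L(c(s),c(t))$ with constant action identically $1$. A further affine rescaling to $[0,1]$ produces an admissible competitor in Definition~\ref{def:def:W} with total action $L^2$, whence $\cW(c(s),c(t))\leq\int_s^t\sqrt{\cA(c(r),\psi(r))}\,dr$. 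This shows $c$ is $\cW$-absolutely continuous and, dividing by $t-s$ and sending $s\to t$ at Lebesgue points, yields $|c'|(t)\leq\sqrt{\cA(c(t),\psi(t))}$ a.e.

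\emph{Necessity.} Let $c$ be $\cW$-AC. After a standard arclength reparametrization I may assume $c$ is $\cW$-Lipschitz, so $|c'|\in L^\infty\subset L^2$. For each $n\in\N$ partition $[0,T]$ into equal pieces of length $h_n=T/n$ and, using the geodesics of Proposition~\ref{prop_metric}, interpolate between $c(kh_n)$ and $c((k+1)h_n)$; concatenation produces $(c^n,\psi^n)\in\CE_T$ with $c^n(kh_n)=c(kh_n)$ and, by the geodesic property combined with Cauchy--Schwarz applied to the AC estimate,
\begin{equation*}
 \int_0^T\cA(c^n(t),\psi^n(t))\,dt=\sum_{k=0}^{n-1}\frac{\cW^2(c(kh_n),c((k+1)h_n))}{h_n}\leq\int_0^T|c'|^2(r)\,dr.
\end{equation*}
Setting $v^n:=w(c^n)\nabla\psi^n$, finiteness of $\cX$ yields a subsequence along which $v^n\rightharpoonup v$ weakly in $L^2([0,T];\R^{\cX\times\cX})$, while $c^n\to c$ uniformly (using the $\cW$-H\"older bound on each piece together with agreement at the nodes). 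The weak form~\eqref{e:MF:continuity_equ:weak} is linear in $v$, so it passes to the limit and gives $(c,v)\in\vecfield\CE_T$.

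\emph{Lower-semicontinuity and closure.} Joint convexity and lower-semicontinuity of $(\mu,v)\mapsto\vecfield\cA(\mu,v)$---which rests on convexity of $\alpha$ together with continuity of $\mu\mapsto w(\mu)$---give
\begin{equation*}
 \int_0^T\vecfield\cA(c(t),v(t))\,dt\leq\liminf_n\int_0^T\vecfield\cA(c^n(t),v^n(t))\,dt\leq\int_0^T|c'|^2(r)\,dr.
\end{equation*}
In particular $v_{xy}(t)=0$ wherever $w_{xy}(c(t))=0$, so Proposition~\ref{prop:Gradient fields} produces $\psi:[0,T]\to\R^\cX$ with $(c,\psi)\in\CE_T$ and $\int_0^T\cA(c,\psi)\,dt\leq\int_0^T|c'|^2\,dr$. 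Applying the sufficiency direction to this $\psi$ yields $|c'|^2(t)\leq\cA(c(t),\psi(t))$ a.e., and combined with the integral inequality this forces $|c'|^2=\cA(c,\psi)$ a.e.; undoing the arclength reparametrization transfers the identity to the original curve.

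The main obstacle is the limit step in the necessity direction: one must simultaneously ensure that the weak $L^2$ limit $v$ remains coupled to $c$ by the continuity equation (a linearity argument, relying on uniform convergence of $c^n$) \emph{and} that $\vecfield\cA$ is jointly lower-semicontinuous in $(\mu,v)$, which is the crucial convexity input. Once these are in place, Proposition~\ref{prop:Gradient fields} reduces to gradient form and closes the argument, so the remaining identification of $|c'|^2$ with $\cA(c,\psi)$ is bookkeeping against the sufficiency bound.
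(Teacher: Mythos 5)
Your argument correctly reconstructs the Benamou--Brenier--type proof (reparametrize the curve, piecewise interpolate on a time partition, pass to a weak limit in the vector field, project onto gradient fields via Proposition~\ref{prop:Gradient fields}, then compare the two inequalities to force equality of $|c'|$ and $\sqrt{\cA(c,\psi)}$) that the paper invokes by citation to Dolbeault--Nazaret--Savar\'e~[Thm.~5.17], so the approach is essentially the same. One point to tighten: invoking ``the geodesics of Proposition~\ref{prop_metric}'' in the necessity step is circular as written, because the constant-speed geodesic assertion in that proposition is established in Section~\ref{S:metric} by appealing to Proposition~\ref{accurves}; to avoid this, either take $\eps_n$-optimal competitors $(\gamma^k,\psi^k)\in\CE_1(c(kh_n),c((k+1)h_n))$ directly from Definition~\ref{def:def:W} (which gives the same action bound up to a vanishing error), or appeal to the compactness result Theorem~\ref{thm:MF:compactness} together with Proposition~\ref{prop:Gradient fields}, which produce exact action minimizers without using Proposition~\ref{accurves}.
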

\begin{proof}
  The proof is identical to the one of
  \cite[Thm.~5.17]{Dolbeault2008}.
\end{proof}
\subsection{Gradient flows}\label{S:GF:GF}
In this section, we shall present the interpretation of the discrete
McKean-Vlasov equation as a gradient flow with respect to the distance~$\cW$.
We will use the abstract framework introduced in Section
\ref{sec:metric-gf} above, where $(M,d)=(\cP(\cX),\cW)$ and
$\varPhi=\cF$.
\begin{lemma}
  Let $\mathcal{I}:\cP(\cX)\rightarrow[0,\infty]$ defined in
  \eqref{fisher} denote the Fisher information and let
  $\cF:\cP(\cX)\to \R$ defined in \eqref{e:def:MF:FreeEnergy} denote
  the free energy. Then, $\sqrt{\cI}$ is a strong upper gradient for
  $\cF$ on $(\cP(\cX),\cW)$, i.e. for $(c,\psi)\in \CE_T$
  and $0\leq t_1 < t_2 \leq T$ holds
  \begin{equation}\label{e:strongupper}
    \abs{\cF(c(t_2)) - \cF(c(t_1))}
    \leq \int_{t_1}^{t_2} \sqrt{\cI(c(t))} \sqrt{\cA(c(t),\psi(t))} \; dt .
  \end{equation}
\end{lemma}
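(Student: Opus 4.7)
Proof plan.

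If the right-hand side is infinite there is nothing to show, so assume it is finite. By Cauchy-Schwarz in the integrand we immediately obtain $\cI(c(t))<\infty$ and $\cA(c(t),\psi(t))<\infty$ for a.e.~$t\in(t_1,t_2)$; in particular $c(t)\in\cP^*(\cX)$ for a.e.~$t$. Moreover, since $w_{xy}\leq C_w$, the bound $|v_{xy}|^2=w_{xy}|\nabla_{xy}\psi|^2\cdot w_{xy}\leq C_w\cdot 2\cA(c,\psi)$ shows $v\in L^1$, so each $c_x$ is absolutely continuous in $t$ and the continuity equation holds in the strong form $\dot c_x(t)=(\delta v)_x(t)$ a.e.

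The main algebraic step is the identification of $\cI$ as a weighted gradient norm of $D\cF$. For $\mu\in\cP^*(\cX)$ one has $D\cF(\mu)_x=\log\mu_x+1+H_x(\mu)$. The detailed balance condition~\eqref{e:DBC} together with the definition of $\pi(\mu)$ yields the identity
\begin{equation*}
\nabla_{xy}D\cF(\mu)\;=\;\log\bigl(\mu_y Q_{yx}(\mu)\bigr)-\log\bigl(\mu_x Q_{xy}(\mu)\bigr)\qquad\text{for } (x,y)\in E_\mu,
\end{equation*}
because $H_y-H_x=\log(Q_{yx}/Q_{xy})$. Since $w_{xy}(\mu)=0$ whenever $(x,y)\notin E_\mu$ (as $\Lambda(0,\cdot)=0$), the Fisher information rewrites as
\begin{equation*}
\cI(\mu)\;=\;\tfrac12\sum_{x,y}w_{xy}(\mu)\bigl(\nabla_{xy}D\cF(\mu)\bigr)^2,
\end{equation*}
with the understanding that the $\nabla_{xy}D\cF$ factor is irrelevant where $w_{xy}$ vanishes.

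Now fix any $t$ such that $c(t)\in\cP^*(\cX)$, $\dot c(t)$ exists, and both $\cI(c(t))$ and $\cA(c(t),\psi(t))$ are finite. Applying the chain rule to the smooth function $\cF$ at the interior point $c(t)$ and using $\dot c=\delta v$ with $v=w(c)\nabla\psi$, a discrete summation-by-parts (using the skew-symmetry built into $\delta$ and the symmetry $w_{xy}=w_{yx}$) gives
\begin{equation*}
\frac{d}{dt}\cF(c(t))\;=\;\sum_x D\cF(c(t))_x(\delta v)_x(t)\;=\;-\tfrac12\sum_{x,y}w_{xy}(c(t))\,\nabla_{xy}D\cF(c(t))\,\nabla_{xy}\psi(t).
\end{equation*}
Applying the Cauchy-Schwarz inequality in the inner product $\langle\cdot,\cdot\rangle_{c(t)}$ used in Proposition~\ref{prop:Gradient fields}, the right-hand side is bounded in absolute value by $\sqrt{\cI(c(t))}\sqrt{\cA(c(t),\psi(t))}$.

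It remains to integrate this pointwise bound from $t_1$ to $t_2$. The main obstacle is that the differential calculation above is valid only where $c(t)\in\cP^*(\cX)$, i.e.~on the open full-measure set $J:=\{t\in[t_1,t_2]:c(t)\in\cP^*(\cX)\}$. On each connected component $(a_i,b_i)$ of $J$ the argument above combined with the fundamental theorem of calculus yields
\begin{equation*}
|\cF(c(b_i))-\cF(c(a_i))|\;\leq\;\int_{a_i}^{b_i}\sqrt{\cI(c(t))}\sqrt{\cA(c(t),\psi(t))}\,dt.
\end{equation*}
Since $\cF$ is continuous on all of $\cP(\cX)$ (with the convention $0\log 0=0$) and $c$ is continuous, summing these estimates over the countably many components, and noting that the complement $[t_1,t_2]\setminus J$ has Lebesgue measure zero while $\cF\circ c$ is continuous, yields~\eqref{e:strongupper}. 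The delicate point is ensuring that no variation of $\cF\circ c$ is lost on the measure-zero exceptional set; this is handled by a standard approximation/telescoping argument, or equivalently by first establishing the bound on $[t_1+\delta,t_2-\delta]$ with endpoints chosen in the full-measure set where $c\in\cP^*$, and then passing to the limit $\delta\downarrow 0$ using the continuity of $\cF\circ c$.
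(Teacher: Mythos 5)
Your algebraic core is essentially the same as the paper's: you identify $\nabla_{xy}D\cF(\mu) = \log(\mu_y Q_{yx}(\mu)) - \log(\mu_x Q_{xy}(\mu))$ via detailed balance, recognize $\cI$ as the weighted squared gradient norm of $D\cF$, and apply discrete summation-by-parts plus Cauchy--Schwarz to bound $\frac{d}{dt}\cF(c(t))$ by $\sqrt{\cI(c(t))}\sqrt{\cA(c(t),\psi(t))}$. That part is correct and matches the paper in spirit.

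Where you diverge, and where there is a genuine gap, is the passage from the pointwise bound on the open set $J = \{t : c(t)\in\cP^*(\cX)\}$ to the integrated inequality on all of $[t_1,t_2]$. You propose summing the estimate over the countably many components $(a_i,b_i)$ of $J$ and appealing to continuity of $\cF\circ c$ plus the fact that $K := [t_1,t_2]\setminus J$ is Lebesgue-null. This does not close the argument. The problem is that $\cF\circ c$ is continuous but a priori not absolutely continuous: its derivative $D\cF$ blows up as $c$ approaches $\partial\cP(\cX)$, so $\cF\circ c$ could, in principle, carry singular (Cantor-like) variation concentrated on the null closed set $K$. In exactly that situation, $\sum_i |\cF(c(b_i)) - \cF(c(a_i))| $ would not control $|\cF(c(t_2)) - \cF(c(t_1))|$, and your replacement suggestion (moving the endpoints into $J$ and letting $\delta\downarrow 0$) does not help either, since the obstruction is the exceptional set in the interior, not at the endpoints. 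The ``standard approximation/telescoping argument'' you invoke is precisely what needs to be proved here, and it is not automatic.

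The paper's proof sidesteps this by regularizing: it introduces $\cF_\delta(\mu) = \sum_x(\mu_x+\delta)\log(\mu_x+\delta) + U(\mu)$, which is globally Lipschitz on $\cP(\cX)$, so $\cF_\delta\circ c$ is automatically absolutely continuous and the fundamental theorem of calculus applies without worrying about $\partial\cP(\cX)$. It then shows that the resulting integrand is dominated uniformly in $\delta$ by $\sqrt{\cA(c)}\sqrt{2\cI(c) + C_H^2\sum_{x,y}w_{xy}(c)}$ and passes to the limit $\delta\to 0$ by uniform convergence of $\cF_\delta\to\cF$ and dominated convergence. If you want to keep your structure, you would need to either adopt this regularization or otherwise prove directly that $\cF\circ c$ has no singular part — e.g., by establishing absolute continuity of $\cF\circ c$ from the finiteness of $\int\sqrt{\cI(c)}\sqrt{\cA(c,\psi)}\,dt$; as written, that step is missing.

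As a minor point, your reduction in the first paragraph deducing $v\in L^1$ from $\cA$ is not needed (integrability of $v$ is already part of the definition of $\CE_T$) and would in any case require $\int_0^T\sqrt{\cA(c(t),\psi(t))}\,dt<\infty$, which does not follow from finiteness of the right-hand side of~\eqref{e:strongupper} alone. Also, with the sign conventions of Definition~\ref{def:MF:continuity_equ}, the strong form of the continuity equation reads $\dot c_x = -(\delta v)_x$, not $\dot c_x = (\delta v)_x$; this is harmless for the final estimate because of the absolute value, but worth fixing.
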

\begin{proof}
  Let ${c}:(a,b)\rightarrow(\cP(\cX),\cW)$ be a $\cW-$absolutely
  continuous curve with $\psi$ the associated gradient potential such
  that $({c},\psi)\in \CE(c(a),c(b))$ and
  $|{c}'|(t)=\sqrt{\cA({c}(t),\psi(t))}$ for a.e.~$t\in (a,b)$. We can
  assume w.l.o.g.\ that the r.h.s.\ of~\eqref{e:strongupper} is finite.
  For the proof, we are going to define the auxiliary
  functions \[\cF_{\delta}(\mu)=\sum_{x\in\cX}(\mu_{x}+\delta)\log(\mu_{x}+\delta)
  + U(\mu).\] The function $\cF_{\delta}(\mu)$ are Lipschitz
  continuous and converge uniformly to $\cF$, as $\delta\rightarrow
  0$. By Lemma \ref{lemma:bounds}, ${c}$ is also absolutely
  continuous with respect to Euclidean distance. Therewith, since
  $\cF_{\delta}$ are Lipschitz continuous, we have that
  $\cF_{\delta}({c}):(a,b)\rightarrow\mathbb{R}$ is absolutely
  continuous and hence
  \[\cF_{\delta}({c}(t_{2}))-\cF_{\delta}({c}(t_{1}))=\int_{t_{1}}^{t_{2}}
  \frac{d}{dt}
  \cF_{\delta}({c})(t)dt=\int_{t_{1}}^{t_{2}}D\cF_{\delta}({c}(t))\dot{{c}}(t)\;
  dt ,\] where $D\cF_{\delta}({c}(t))$ is well-defined for a.e.~$t$
  and given in terms of
  \[\partial_{e_x} \cF_{\delta}({c}(t)) = H_{x}({c}(t)) + \log
  ({c}_{x}(t)+\delta) + 1.\] Now, we have by using the Cauchy-Schwarz
  inequality
  \begin{align*}
    &\int_{t_{1}}^{t_{2}}|D\cF_{\delta}({c}(t))\dot{{c}}(t)|\; dt\leq\int_{t_{1}}^{t_{2}}\bigg|\frac{1}{2}\sum_{x,y\in\cX}(\psi_{x}-\psi_{y})(\partial_{e_x} \cF_{\delta}({c})-\partial_{e_y} \cF_{\delta}({c}))w_{xy}({c})\bigg|\; dt\\
    &\leq \int_{t_{1}}^{t_{2}}\sqrt{\frac{1}{2}\sum_{x,y\in\cX}\big(\nabla_{xy}\psi\big)^{2}w_{xy}({c})}  \sqrt{\frac{1}{2}\sum_{x,y\in\cX}\big(\partial_{e_x} \cF_{\delta}({c})-\partial_{e_y} \cF_{\delta}({c})\big)^{2}w_{xy}({c})}\; dt\\
    &= \int_{t_{1}}^{t_{2}}\sqrt{\cA({c},\psi)} \ \ \times\\
    &\qquad\qquad\sqrt{\frac{1}{2}\sum_{x,y\in\cX} \big( \log( {c}_{x}+\delta)  + H_{x}({c}) -\log ({c}_{y}+\delta)    -H_{y}({c})\big)^{2}w_{xy}({c})}\; dt. \\
    &\leq
    \int_{t_{1}}^{t_{2}}\sqrt{\cA({c},\psi)}\sqrt{2\cI({c})+C^2_H \sum_{x,y\in\cX}
      w_{xy}(c) }\; dt ,
  \end{align*}
  where we dropped the $t$-dependence on $c$ and $\psi$.
  For the last inequality, we observe that since $H_{x}$ for $x
  \in\cX$ are uniformly bounded and for $a<b, \delta>0$ it holds
  $\frac{b}{a}\geq\frac{b+\delta}{a+\delta}$, it is easy to see that
  the quantity $ |\log ({c}_{x}(t)+\delta) + H_{x}({c}(t))
  -\log({c}_{y}(t)+\delta) -H_{y}({c}(t))|$ is bounded by $|\log
  ({c}_{x}(t)) + H_{x}({c}(t)) -\log({c}_{y}(t)) -H_{y}({c}(t))|+C_H$
  with $C_H$ only depending on $H$. Moreover, we observe that by
  definitions of $H_x$ and $\pi$ from~\eqref{e:def:piH}, it holds
  \begin{align*}
    &\abs{\log ({\mu}_{x})  + H_{x}({\mu}) -\log({\mu}_{y}) -H_{y}({\mu})} = \abs{\log\frac{\mu_x}{\pi_x(\mu)}- \log\frac{\mu_y}{\pi_y(\mu)}} \\
    &= \abs{\log\bra{\mu_x Q_{xy}(\mu)} - \log\bra{\pi_x(\mu) Q_{xy}(\mu)} -
      \log\bra{\pi_y(\mu) Q_{yx}(\mu)} - \log\bra{\mu_y Q_{yx}(\mu)}} .
  \end{align*}
  Then, by the detailed balance condition~\eqref{e:DBC} the two middle
  terms cancel and we arrive at $\cI(\mu)$. Since, we assumed
  $\int_{t_{1}}^{t_{2}}\sqrt{\cA({c}(t),\psi(t))}\sqrt{\cI({c}(t))}\;
  dt$ to be finite, we can apply the dominated convergence theorem and
  get the conclusion.
\end{proof}
\begin{proposition}
  \label{prop_max_slope_nonlin}
  For any absolutely continuous curve $({c}(t))_{t \in [0,T]}$ in
  $\mathcal{P}(\cX)$ holds
  \begin{align}\label{e:LDfunctional}
    \cJ({c}) := \cF({c}(T)) - &\cF({c}(0)) +
    \frac{1}{2}\int_0^T{\mathcal{I}({c}(t))\; dt} +
    \frac{1}{2}\int_0^T{\mathcal{A}({c}(t), \psi(t))\; dt} \geq 0
  \end{align}
  Moreover, equality is attained if and only if $({c}(t))_{t \in
  [0,T]}$ is a solution to \eqref{eq_evol_nonlin}. In this case
  ${c}(t)\in\cP^{*}(\cX)$ for all $t>0$.
\end{proposition}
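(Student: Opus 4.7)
The plan is to derive $\cJ(c)\geq 0$ from the strong upper gradient bound just established combined with Young's inequality, and then read off the equality case by unwinding the intermediate Cauchy--Schwarz step: this will force $\nabla\psi$ to coincide with $-\nabla D\cF(c)$ in the $w(c)$-weighted inner product, which upon substitution into the continuity equation reproduces \eqref{e:main-eq}.

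For the inequality, let $c$ be $\cW$-absolutely continuous on $[0,T]$ and take $\psi$ via Proposition~\ref{accurves} so that $(c,\psi)\in\CE_T$ and $|c'(t)|^2=\cA(c(t),\psi(t))$ a.e. Applying the strong upper gradient lemma followed by Young's inequality $ab\leq \tfrac12 a^2+\tfrac12 b^2$ to the integrand,
\begin{equation*}
\cF(c(0))-\cF(c(T))\;\leq\;\int_0^T\!\!\sqrt{\cI(c(t))\,\cA(c(t),\psi(t))}\,dt\;\leq\;\tfrac{1}{2}\int_0^T\!\!\cI(c(t))\,dt+\tfrac{1}{2}\int_0^T\!\!\cA(c(t),\psi(t))\,dt,
\end{equation*}
which rearranges to $\cJ(c)\geq 0$.

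For the equality case, $\cJ(c)=0$ saturates both inequalities. Young's equality forces $\cI(c(t))=\cA(c(t),\psi(t))$ for a.e.~$t$. The strong upper gradient lemma was proved via Cauchy--Schwarz applied to the $w(c)$-weighted inner product of $\nabla D\cF(c)$ and $\nabla\psi$; here $\cI(\mu)=\|\nabla D\cF(\mu)\|^2_{w(\mu)}$ because the detailed balance identity $H_x-H_y=\log Q_{xy}-\log Q_{yx}$ yields $\nabla_{xy}D\cF(\mu)=\log(\mu_y Q_{yx})-\log(\mu_x Q_{xy})$. Saturation of that Cauchy--Schwarz, together with the norm equality and the sign needed for $\cF$ to decrease, forces $\nabla_{xy}\psi(t)=-\nabla_{xy}D\cF(c(t))$ on every edge with $w_{xy}(c(t))>0$. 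The log-mean identity $\Lambda(a,b)(\log a-\log b)=a-b$ then yields $w_{xy}(c)\nabla_{xy}\psi=c_x Q_{xy}(c)-c_y Q_{yx}(c)$ on such edges, and inserting this into $\dot c_x=-(\delta v)_x$ with $v_{xy}=w_{xy}(c)\nabla_{xy}\psi$ reproduces \eqref{e:main-eq}. Conversely, for any solution $c$ of \eqref{e:main-eq} the choice $\psi:=-D\cF(c)$ yields $(c,\psi)\in\CE_T$ with $\cA(c,\psi)=\cI(c)$ and $\tfrac{d}{dt}\cF(c)=-\cI(c)$, which integrates to $\cJ(c)=0$.

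Finally, for the positivity claim, $\cJ(c)=0$ forces $\int_0^T\cI(c(t))\,dt<\infty$, so $\cI(c(t))<\infty$ for a.e.~$t$; since $\cI\equiv+\infty$ on $\cP(\cX)\setminus\cP^*(\cX)$, one has $c(t)\in\cP^*(\cX)$ for a.e.~$t\in(0,T)$. The mean-field equation gives the pointwise bound $\dot c_x\geq -Cc_x$ with $C$ controlling $\sum_y Q_{xy}$ uniformly on $\cP(\cX)$; Gronwall yields $c_x(t)\geq c_x(s)e^{-C(t-s)}$ for $s<t$, so choosing $s\in(0,t)$ with $c(s)\in\cP^*(\cX)$ gives $c(t)\in\cP^*(\cX)$ for every $t>0$. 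The main obstacle is the equality-case chain rule: in the preceding lemma the derivative of $\cF$ along $c$ was only controlled through the $\cF_\delta$ regularization, so to obtain an honest equality and the proportionality $\nabla\psi=-\nabla D\cF(c)$ one must pass $\delta\to 0$ on the a.e.~set $\{c(t)\in\cP^*(\cX)\}$, using the uniform bound $w_{xy}\leq C_w$ to dispose of the error terms introduced by the regularization.
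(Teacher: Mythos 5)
Your inequality argument and your converse direction match the paper's. Where you diverge is the equality case, and there the two routes are genuinely different. The paper localizes: since $\int_0^T\cI(c(t))\,dt<\infty$ there is a sequence $\epsilon_n\downarrow 0$ with $\cI(c(\epsilon_n))<\infty$, hence $c(\epsilon_n)\in\cP^*(\cX)$; Euclidean continuity of $c$ gives a short interval $[\epsilon_n,\epsilon_n+\epsilon]$ on which $c\in\cP^a(\cX)$ for some $a>0$, so that $\cF$ is Lipschitz there and the chain rule is unproblematic; the Cauchy--Schwarz saturation then yields the ODE locally, and a bootstrap (irreducibility of $Q$ keeps $c$ uniformly positive) extends validity to $[\epsilon_n,T]$, after which one sends $\epsilon_n\to 0$. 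You instead work directly on the a.e.~full-measure set $\{t:c(t)\in\cP^*(\cX)\}$ and rely on passing $\delta\to 0$ in the $\cF_\delta$-regularized chain rule by dominated convergence. This route can be made to work: the proof of the strong-upper-gradient lemma supplies the $\delta$-uniform domination $\abs{D\cF_\delta(c)\cdot\dot c}\leq\sqrt{\cA(c,\psi)}\,\sqrt{2\cI(c)+C}$, which is integrable on $[0,T]$ because $\cJ(c)=0$ forces $\int\cA,\int\cI<\infty$, and the pointwise limit exists a.e.\ on the positivity set; this delivers $\cF(c(t_2))-\cF(c(t_1))=\int_{t_1}^{t_2}D\cF(c)\cdot\dot c\,dt$, the saturation argument and log-mean identity then give $\dot c=cQ(c)$ a.e., and since $c$ is absolutely continuous this integrates to the classical ODE on all of $[0,T]$, making the Gronwall step legitimate. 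The weakness of your write-up is that you flag this dominated-convergence passage as ``the main obstacle'' without actually carrying it out, and it is precisely the crux --- the very step the paper's localization-plus-bootstrap is designed to avoid. You should also state explicitly that the a.e.\ identity $\dot c=cQ(c)$ combined with absolute continuity of $c$ yields the integral form of the ODE for every $t$, so the positivity argument is not secretly circular; as written, deriving $\dot c_x\geq -Cc_x$ from ``the mean-field equation'' reads as assuming what is being proved.
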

In other words, solution to \eqref{eq_evol_nonlin} are the only
gradient flow curves (i.e.~curves of maximal slope) of $\cF$.
\begin{proof}
  The first statement follows as above by Young's inequality from the
  fact that $\cI$ is strong upper gradient for $\cF$.

  Now let us assume that for a curve ${c},$ $\cJ({c})\leq0$ holds. Then
  since \eqref{e:strongupper} holds for every curve we can deduce that we
  actually have
  \[
    \cF({c}(t_{2}))-\cF({c}(t_{1}))+\frac{1}{2}\int_{t_{1}}^{t_{2}}\cI({c}(t))dt+\frac{1}{2}\int_{t_{1}}^{t_{2}}\cA({c}(t),\psi(t))dt=
  0,\hspace{8pt} 0\leq t_{1}\leq t_{2}\leq T.
  \]
  Since $\int_{0}^{T}\cI({c}(t))dt<\infty$, we can find a sequence
  $\epsilon_{n}$, converging to zero, such that
  $\cI({c}(\epsilon_{n}))<\infty.$ By continuity of ${c},$ we can find
  $a,\epsilon>0$, such that ${c}(t)\in\cP^{a}(\cX)$, for
  $t\in[\epsilon_{n},\epsilon_{n}+\epsilon]$. Now, since $\cI$ is
  Lipschitz in $\cP^{a}(\cX),$ we can apply the chain rule for
  $\epsilon_{n}\leq t_{1}\leq t_{2}\leq \epsilon_n+\epsilon$ and get
  \begin{align*}
    \cF({c}(t_{1}))-\cF({c}(t_{2}))&=\int_{t_{1}}^{t_{2}} \skp{D\cF({c}(t))}{\cK({c}(t)) \nabla\psi(t)}\\
    &=\frac{1}{2}\int_{t_{1}}^{t_{2}}\cA({c}(t),\psi(t))dt+\frac{1}{2}\int_{t_{1}}^{t_{2}}\cI({c}(t))dt,
  \end{align*}
  by comparison we get
  \[
    \skp{D\cF({c}(t))}{\cK({c}(t))
    \nabla\psi(t)}=\sqrt{\cA({c}(t),\psi(t))\;
    \cI({c}(t))}=\cA({c}(t),\psi(t))=\cI({c}(t)),
  \]
  for $t\in[\epsilon_{n},\epsilon_{n}+\epsilon]$.
  From which, by an   application of the inverse of the Cauchy-Schwarz inequality,
  we get that
  $\psi_{x}(t)-\psi_{y}(t)=\partial_{e_{x}}\cF({c}(t))-\partial_{e_{y}}\cF({c}(t))$.
  Now we have
  \begin{equation}
    \label{cK}
    \begin{split}
      \dot{{c}}(t)&=-\cK({c}(t)) D\cF({c}(t))\\
      &= -\frac{1}{2} \sum_{x,y} w_{xy}({c}(t))\; \bra{e_x - e_y} \bra{\partial_{e_x} \cF({c}(t)) - \partial_{e_y} \cF({c}(t))} \\
      &= -\frac{1}{2} \sum_{x,y} \bra{Q_{xy}({c}(t)) {c}_{x}(t) - Q_{yx}(c(t)) {c}_{y}(t)} \bra{e_x - e_y} \\
      &= -\sum_x \bra{ \sum_y \bra{Q_{xy}({c}(t)) {c}_{x}(t) -
          Q_{yx}(c(t)) {c}_{y}(t)} }e_x = {c}(t) Q({c}(t))
    \end{split}\end{equation}
  on the interval $[\epsilon_{n},\epsilon_{n}+\epsilon]$. We actually have that
  ${c}(t)$ is a solution to $ \dot{{c}}(t)= {c}(t) Q({c}(t))$ on
  $[\epsilon_{n},T].$ Indeed, let $T_{n}=\sup\{t'\leq T: \dot{{c}}(t)=
  {c}(t) Q({c}(t)), \forall t\in[\epsilon_{n},t']\}.$ We have
  ${c}(T_{n})\in\cP^{b}(\cX),$ for some $b>0,$ because ${c}$ is a
  solution to $\dot{{c}}(t)= {c}(t) Q({c}(t)),$ on
  $[\epsilon_{n},T_{n})$ and the dynamics are irreducible. Now if
  we apply the same argument for $T_{n},$ that we used for
  $\epsilon_{n},$ we can extent the solution beyond $T_{n}.$ If
  $T_{n}<T,$ then we will get a contradiction, Therefore $T_{n}=T.$
  Now by sending $\epsilon_{n}$ to zero we get that ${c}$ is a
  solution to $\dot{{c}}(t)= {c}(t) Q({c}(t)),$ on $[0,T]$.

  Now on the other hand if ${c}$ is a solution to $\dot{{c}}(t)=
  {c}(t) Q({c}(t))$ on $[0,T],$ we can get that for every
  $\epsilon>0,$ there exists $a>0,$ such that ${c}(t)\in\cP^{a}(\cX)$ on
  $[\epsilon,T].$ The choice $\psi(t)=DF(t),$ satisfies the continuity
  equation (see \eqref{cK}), and by applying the chain rule, we get
  that
  \[\cF({c}(T))-\cF({c}(\epsilon))+\frac{1}{2}\int_{\epsilon}^{T}\cI({c}(t))dt+\frac{1}{2}\int_{\epsilon}^{T}\cA({c}(t),\psi(t))dt=
  0.\] Sending $\epsilon$ to zero concludes the proof.
\end{proof}
\begin{remark}
  Note that the formulation above contains the usual entropy
  entropy-production relation for gradient flows. If ${c}$ is a
  solution to \eqref{eq_evol_nonlin}, then $\psi(t) = -D\cF({c}(t))$ and especially it holds
  that $\cA\bra{{c}(t), - D\cF({c}(t))} = \cI({c}(t))$. Therewith,
  \eqref{e:LDfunctional} becomes
  \begin{equation*}
    \cF({c}(T)) + \int_0^T{\mathcal{I}({c}(t))\; dt} = \cF(c(0)) .
  \end{equation*}
\end{remark}
\subsection{Lifted dynamics on the space of random measures}\label{S:GF:lift}
It is possible to lift the evolution $\dot {c}(t) = {c}(t) Q({c}(t))$
in $\cP(\cX)$ to an evolution for measures $\CC$ on $\cP(\cX)$. This
is convenient, if one does not want to start from a deterministic
point but consider random initial data. The evolution is then formally
given by
\begin{equation}\label{e:Liouville}
  \partial_t \mathbb{{C}} (t,\nu) + \Div_{\cP(\cX)}\bra{ \mathbb{{C}}(t,\nu) \bra{\nu Q(\nu)}} = 0 , \quad\text{with}\quad \Div_{\cP(\cX)} = \sum_{x\in \cX} \partial_{e_x}  .
\end{equation}
\begin{notation}
  In the following, all quantities connected to the space
  $\cP(\cP(\cX))$ will be denoted by blackboard-bold letters, like for
  instance random probability measures $\MM \in \cP(\cP(\cX))$ or
  functionals $\FF : \cP(\cP(\cX)) \to \R$.
\end{notation}
The evolution \eqref{e:Liouville} also has a natural gradient flow structure that is
obtained by lifting the gradient flow structure of the underlying
dynamics. In fact, \eqref{e:Liouville} will turn out to be a gradient
flow w.r.t.~to the classical $L^2$-Wasserstein distance on
$\cP(\cP(\cX))$, which is build from the distance $\cW$ on the base
space $\cP(\cX)$. To establish this gradient structure, we need to
introduce lifted analogues of the continuity equation and the action
of a curve as well as a probabilistic representation result for the
continuity equation.
\begin{definition}[Lifted continuity equation]\label{def:Lio:CE}
  A pair $(\mathbb{{C}},\VV)$ is called a solution to the lifted
  continuity equation, for short $(\CC,\VV)\in \vec\CCEE_T(\MM,\NN)$, if
 \begin{enumerate}[ (i) ]
 \item $[0,T]\ni t \mapsto \mathbb{{C}}(t) \in \cP(\cP(\cX))$ is
   weakly$^*$ continuous,
 \item $\mathbb{{C}}(0)= \MM ;\, \mathbb{{C}}(T) = \NN$;
 \item $\mathbb{\VV}:[0,T]\times \cP(\cX)\rightarrow
   \R^{\cX\times\cX}$ is measurable and integrable w.r.t.~$\CC(t,d\mu)dt$,
 \item The pair $(\mathbb{{C}}, \VV)$ satisfies the
   continuity equation for $t\in (0,T)$ in the weak form, i.e.~for all
   $\varphi\in C^1_c((0,T)\times \cP(\cX))$ holds
   \begin{equation} \label{e:Lio:continuity_equ:weak}\begin{split}
       &\int_0^T \int_{\cP(\cX)} \bra{ \dot\varphi(t,\nu)-
         \skp{\nabla\varphi(t,\nu)}{\delta\VV(t,\nu)} }
       \mathbb{{C}}(t,d\nu) \;dt = 0 ,
     \end{split}
   \end{equation}
 \end{enumerate}
where $\delta\VV: \cP(\cX)\rightarrow
\R^{\cX\times\cX},$ is given by $\delta\VV(\nu)_x := \frac{1}{2} \sum_{y} (\VV_{xy}(\nu)-\VV_{yx}(\nu)).$
\end{definition}
Here we consider $\cP(\cX)$ as a subset of Euclidean space
$\R^\cX$ with $\skp{\cdot}{\cdot}$ the usual inner product. In particular,
$\nabla\varphi(t,\mu)=(\partial_{\mu_x}\varphi(t,\mu))_{x\in\cX}$
denotes the usual gradient on $\R^\cX$ and we have explicitly
\begin{align*}
  \skp{\nabla\varphi(t,\nu)}{\delta\VV(t,\nu)} &= \sum_{x\in\cX} \partial_{\mu_x}\varphi(t,\mu)(\delta \VV(t,\mu))_x \\
  &= \frac12\sum_{x,y\in\cX} \partial_{\mu_x}\varphi(t,\mu)\Big(\VV_{xy}(t,\mu)-\VV_{yx}(t,\mu)\Big).
\end{align*}
Thus, \eqref{e:Lio:continuity_equ:weak} is simply the weak formulation
of the classical continuity equation in $\R^\cX$.
In a similar way, we shall write $(\CC,\PPsi)\in \CCEE_T(\MM,\NN)$ if
$\PPsi:[0,T]\times\cP(\cX)\to\R^\cX$ is a function such that
$(\CC,\tilde\VV)\in\vec\CCEE_T(\MM,\NN)$ with
$\tilde\VV_{xy}(t,\mu)=w_{xy}(\mu)\nabla_{xy}\PPsi(t,\mu)$. In this
case we have that $\delta \VV(\mu)=\cK(\mu)\PPsi(\mu)$, where
$\cK(\mu)$ is the Onsager operator defined in
\eqref{e:def:OnsagerOperator}.
Solutions to~\eqref{e:Liouville} are understood as weak solutions like
in Definition~\eqref{def:Lio:CE}. That is $\CC$ is a weak solution to~\eqref{e:Liouville} if $(\CC,\PPsi^*) \in \CCEE_T$ with $\PPsi^*(\nu):= - D\cF(\nu)$. This leads, via the formal calculation
\[
  \delta \VV^*(\nu) := \cK(\nu) \PPsi^*(\nu) = - \cK(\nu) D\cF(\nu) = \nu Q(\nu),
\]
to the formulation: For all $\varphi \in C_c^1([0,T]\times \cP(\cX))$ we have
\begin{equation}\label{e:Liouville:weak}
  \int_0^T \int_{\cP(\cX)} \bra{ \dot\varphi(t,\nu)-
  \skp{\nabla\varphi(t,\nu)}{\nu Q(\nu)}}
  \mathbb{{C}}(t,d\nu) \;dt = 0 \; .
\end{equation}
By the Lipschitz assumption on $Q$ the vector field $\nu Q(\nu)$ given by the components
$\bra{\nu Q(\nu)}_x = \sum_y \nu_y Q_{xy}(\nu)$ is also Lipschitz. Then standard theory implies that
equation~\eqref{e:Liouville:weak} has a unique solution (cf.~\cite[Chapter 8]{Ambrosio2008}).
\begin{definition}[Lifted action]
  Given $\MM\in \cP(\cP(\cX))$, $\VV:\cP(\cX)\to\R^{\cX\times\cX}$ and $\PPsi
  :\cP(\cX) \to \R^{\cX}$, we define the action of $(\MM,\VV)$ and
  $(\MM,\PPsi)$ by
  \begin{align*}
\vec\AA(\MM,\VV) &:= \int_{\cP(\cX)} \vec\cA(\nu,\VV(\nu)) \; \MM(d\nu)\;,\\
    \AA(\MM,\mathbb{\Psi}) &:= \int_{\cP(\cX)} \cA(\nu,\mathbb{\Psi}(\nu)) \; \MM(d\nu)\;.
 \end{align*}
\end{definition}
The next result tell us that is is sufficient to consider only
gradient vector fields. It is the analog of
Proposition~\ref{prop:Gradient fields}.
\begin{proposition}[Gradient fields for Liouville
  equation]\label{prop:Lio:GradientFields}
  If $(\CC,\VV)\in \vec\CCEE_T$ is a curve of finite action,
  then there exists $\PPsi:[0,T]\times \cP(\cX)\to \R^{\cX}$
  measurable such that $(\CC,\PPsi)\in \CCEE_T$ and
  \begin{equation}\label{e:Lio-action-vec}
    \int_0^T \AA(\CC(t),\PPsi(t)) dt  \leq     \int_0^T \vec\AA(\CC(t),\VV(t)) dt .
  \end{equation}
\end{proposition}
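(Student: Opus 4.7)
The strategy is to apply the proof of Proposition~\ref{prop:Gradient fields} pointwise in $(t,\nu)$ and then integrate. Since $\int_0^T \vec\AA(\CC(t),\VV(t))\,dt < \infty$, the set
\[
  E := \set{(t,\nu) : \vec\cA(\nu,\VV(t,\nu)) < \infty}
\]
has full $dt \otimes \CC(t,d\nu)$-measure. On $E$, the structure of $\alpha$ in~\eqref{e:def:alpha} forces $\VV_{xy}(t,\nu) = 0$ whenever $w_{xy}(\nu) = 0$, so I can define $\Phi(t,\nu) \in \R^{\cX\times\cX}$ by $\Phi_{xy}(t,\nu) := \VV_{xy}(t,\nu)/w_{xy}(\nu)$ on edges with positive weight and zero elsewhere, obtaining $\vec\cA(\nu,\VV(t,\nu)) = |\Phi(t,\nu)|^2_\nu$ in the $\nu$-weighted inner product $\skp{\cdot}{\cdot}_\nu$ used in Proposition~\ref{prop:Gradient fields}.

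For each $\nu$, that proof yields the orthogonal decomposition $\R^{\cX\times\cX} = \mathrm{Ker}(\nabla^*_\nu) \oplus^\perp \mathrm{Ran}(\nabla)$. Let $P_\nu$ denote the orthogonal projection onto $\mathrm{Ran}(\nabla)$ with respect to $\skp{\cdot}{\cdot}_\nu$, and set $\nabla\PPsi(t,\nu) := P_\nu\Phi(t,\nu)$, fixing $\PPsi(t,\nu) \in \R^\cX$ uniquely by the normalisation $\sum_x \PPsi_x(t,\nu) = 0$; outside of $E$ I put $\PPsi(t,\nu) := 0$. By the Pythagorean identity in $\skp{\cdot}{\cdot}_\nu$,
\[
  \cA(\nu,\PPsi(t,\nu)) = |\nabla\PPsi(t,\nu)|^2_\nu \leq |\Phi(t,\nu)|^2_\nu = \vec\cA(\nu,\VV(t,\nu))
\]
holds pointwise on $E$, and integrating against $\CC(t,d\nu)\,dt$ yields inequality~\eqref{e:Lio-action-vec}.

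It remains to check that $(\CC,\PPsi) \in \CCEE_T(\MM,\NN)$, i.e.\ that $(\CC,\tilde\VV)\in\vec\CCEE_T(\MM,\NN)$ for $\tilde\VV_{xy}(t,\nu) := w_{xy}(\nu)\nabla_{xy}\PPsi(t,\nu)$. Since $\CC$ already satisfies the continuity equation driven by $\VV$, it suffices to verify $\delta\tilde\VV(t,\nu) = \delta\VV(t,\nu)$ as vectors in $\R^\cX$ for almost every $(t,\nu)$. Using the symmetry $w_{xy}(\nu) = w_{yx}(\nu)$ inherited from the logarithmic mean $\Lambda$, a short computation shows $\delta[w(\nu)\Phi]_x = \tfrac12\sum_y w_{xy}(\nu)(\Phi_{xy}-\Phi_{yx})$, so that the defining condition $\Phi \in \mathrm{Ker}(\nabla^*_\nu)$ from Proposition~\ref{prop:Gradient fields} is equivalent to $\delta[w(\nu)\Phi] = 0$. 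Applying this to the residual $\Phi(t,\nu) - \nabla\PPsi(t,\nu) \in \mathrm{Ker}(\nabla^*_\nu)$ gives $\delta(\VV - \tilde\VV)(t,\nu) = 0$, as required.

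The main technical point I anticipate is the joint Borel measurability of $\PPsi$ in $(t,\nu)$, which reduces to measurability of the selection $\nu \mapsto P_\nu$. Since the weights $w_{xy}(\nu)$ are continuous in $\nu$ and $P_\nu$ can be expressed as an explicit algebraic function of the weighted graph Laplacian associated to $w(\nu)$ (for instance via its Moore--Penrose pseudoinverse, which is Borel measurable in the weights since the rank of the Laplacian is controlled by the connected components of $\set{w_{xy}(\nu)>0}$), the map $(t,\nu) \mapsto P_\nu\Phi(t,\nu)$ is Borel on $E$. Combined with the extension by zero off $E$, this gives the desired measurability of $\PPsi$, completing the construction.
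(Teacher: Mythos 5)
Your proposal follows exactly the paper's approach: apply the construction from Proposition~\ref{prop:Gradient fields} pointwise in $(t,\nu)$, project onto $\mathrm{Ran}(\nabla)$ in the $\nu$-weighted inner product, and integrate. You fill in two details that the paper dismisses as ``readily checked'' or leaves implicit — the verification that $\delta\tilde\VV=\delta\VV$ follows from $\Phi-\nabla\PPsi\in\mathrm{Ker}(\nabla^*_\nu)$ (your computation identifying $\nabla^*_\nu$ with $-\delta(w\,\cdot)$ is correct), and the Borel measurability of the selection $(t,\nu)\mapsto\PPsi(t,\nu)$ via the pseudoinverse of the weighted Laplacian — both of which are correct and welcome additions.
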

\begin{proof}
  Given a solution $(\CC,\VV)\in \vec\CCEE_T$, for each $t$ and
  $\nu\in\cP(\cX)$ we apply the contruction in the proof of
  Proposition~\ref{prop:Gradient fields} to $\VV(\nu)$ to
  obtain $\PPsi(t,\nu)$ with $\cA(\nu,\PPsi(t,\nu))\leq
  \vec\cA(\nu,\VV(t,\nu))$. It is readily checked that
  $(\CC,\PPsi)\in\CCEE_T$. Integration against $\CC$ and $dt$ yields
  \eqref{e:Lio-action-vec}.
\end{proof}
\begin{definition}[Lifted distance]
  Given $\MM,\NN \in \cP(\cP(\cX))$ we define
  \begin{equation}\label{e:Lio:W}
    \WW^2(\MM,\NN) := \inf\set{ \int_0^1 \AA(\mathbb{{C}}(t),\mathbb{\Psi}(t)) \; dt : (\mathbb{{C}},\mathbb{\Psi})\in \CCEE_1(\MM,\NN)}\;.
  \end{equation}
\end{definition}
Analogously to Remark \ref{rem:equiv-formulation} we obtain an equivalent formulation of $\WW$:
\begin{equation}\label{e:Lio:W-alt}
  \WW^2(\MM,\NN) = \inf\set{ \int_0^1 \vec\AA(\mathbb{{C}}(t),\VV(t)) \; dt : (\mathbb{{C}},\VV)\in \vec\CCEE_1(\MM,\NN)}\;.
\end{equation}
The following result is a probabilistic representation via
characteristics for the continuity equation. It is a variant of
\cite[Prop.~8.2.1]{Ambrosio2008} adapted to our setting.
\begin{proposition}\label{prop:Lio:representation}
For a given $\MM,\NN \in \cP(\cP(\cX))$ let
$(\mathbb{{C}},\mathbb{\Psi})\in \CCEE_T(\MM,\NN)$ be a solution of
the continuity equation with finite action.

Then there exists a probability measure~$\Theta$ on $\cP(\cX) \times
\AC([0,T],\cP(\cX))$ such that:
\begin{enumerate}
 \item Any $(\mu,{c}) \in \operatorname{supp} \Theta$ is a solution of the ODE
   \begin{align*}
     \dot{c}(t) &= \cK(c(t)) \mathbb{\Psi}(t,{c}(t))\quad \text{ for a.e. } t\in [0,T]\;,\\
     {c}(0) &= \mu\;.
   \end{align*}

 \item For any $\varphi\in C_b^0(\cP(\cX))$ and any $t\in [0,T]$ holds
\begin{equation}\label{e:Lio:disint}
 \int_{\cP(\cX)} \varphi(\nu) \; \mathbb{{C}}(t,d\nu) = \int_{\cP(\cX) \times \AC([0,T],\cP(\cX))} \varphi({c}(t)) \; \Theta(d\mu_0,d{c}) .
\end{equation}
\end{enumerate}
Conversely any $\Theta$ satisfying~\it{(1)} and
\begin{equation*}
 \int_{\cP(\cX) \times \AC([0,T],\cP(\cX))} \int_0^T  \cA\bra{c(t),\PPsi(t,c(t))}  dt \; \Theta(d\mu, d{c}) < \infty
\end{equation*}
induces a family of measures $\mathbb{{C}}(t)$
via~\eqref{e:Lio:disint} such that $(\mathbb{{C}},\mathbb{\Psi}) \in
\CCEE_T(\MM,\NN)$.
\end{proposition}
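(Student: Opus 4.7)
The plan is to apply the Euclidean superposition principle for continuity equations, \cite{Ambrosio2008}[Prop.~8.2.1], to equation~\eqref{e:Lio:continuity_equ:weak} viewed as a classical continuity equation on $\R^\cX$.

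Under the identification $\cP(\cX)\subset\R^\cX$, the weak formulation~\eqref{e:Lio:continuity_equ:weak} is exactly the distributional form of
\begin{equation*}
\partial_t \CC(t) + \Div_{\R^\cX}\bigl(b(t,\cdot)\CC(t)\bigr)=0 , \qquad b(t,\nu) := -\delta\VV(t,\nu) = \cK(\nu)\PPsi(t,\nu),
\end{equation*}
where the second identity uses symmetry $w_{xy}=w_{yx}$ and antisymmetry of $\nabla\PPsi$. Two structural facts are central here. First, $\sum_x b_x(t,\nu)=0$ because swapping $x\leftrightarrow y$ negates the sum, so $b$ is tangent to the affine hyperplane containing $\cP(\cX)$. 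Second, since $w_{xy}(\nu)=\Lambda(\nu_x Q_{xy}(\nu),\nu_y Q_{yx}(\nu))$ vanishes whenever $\nu_x=0$, the $x$-component of $b$ vanishes on the face $\{\nu_x=0\}$; consequently $\cP(\cX)$ is forward invariant under the characteristic flow, so no issue arises from the boundary of the simplex.

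Next I would verify the integrability condition of the superposition principle. Using the uniform bound $w_{xy}\leq C_w$ one obtains a pointwise estimate
\begin{equation*}
|b(t,\nu)|^2 \leq C \cA(\nu,\PPsi(t,\nu))
\end{equation*}
with $C$ depending only on $|\cX|$ and $C_w$. Integrating against $\CC(t)\,dt$, the finite-action assumption $\int_0^T\AA(\CC(t),\PPsi(t))\,dt<\infty$ yields $\int_0^T\!\int|b|^2\,\CC(t,d\nu)\,dt<\infty$, which together with weak$^*$ continuity of $\CC(\cdot)$ is precisely the hypothesis of the superposition principle. Applying it produces a probability measure $\Theta$ on $\cP(\cX)\times C([0,T],\cP(\cX))$ concentrated on pairs $(\mu,c)$ with $c(0)=\mu$ and $c$ absolutely continuous solving $\dot c(t)=b(t,c(t))=\cK(c(t))\PPsi(t,c(t))$ for a.e.\ $t$, and such that $\CC(t)=(e_t)_\#\Theta$ with $e_t(\mu,c)=c(t)$, which is precisely~\eqref{e:Lio:disint}. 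Since $\int_0^T\!\cA(c(t),\PPsi(t,c(t)))\,dt<\infty$ $\Theta$-a.e.\ by Fubini, Proposition~\ref{accurves} lifts each trajectory to $\AC([0,T],\cP(\cX))$ with respect to $\cW$.

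For the converse, given $\Theta$ satisfying (1) together with the finite-action bound, define $\CC(t):=(e_t)_\#\Theta$, which is~\eqref{e:Lio:disint} by construction. Weak$^*$ continuity of $\CC$ follows from continuity of trajectories and dominated convergence. For the continuity equation, pick $\varphi\in C_c^1((0,T)\times\cP(\cX))$ and use the chain rule along each trajectory,
\begin{equation*}
\frac{d}{dt}\varphi(t,c(t)) = \dot\varphi(t,c(t)) + \skp{\nabla\varphi(t,c(t))}{\cK(c(t))\PPsi(t,c(t))} = \dot\varphi(t,c(t)) - \skp{\nabla\varphi(t,c(t))}{\delta\VV(t,c(t))},
\end{equation*}
and integrate in $t$ to obtain zero by compact support; then integrate against $\Theta$ and apply~\eqref{e:Lio:disint} to recover~\eqref{e:Lio:continuity_equ:weak}. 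The main obstacle in the whole argument is the degeneracy of $b$ at the boundary of the simplex, where $b$ is merely Borel and $w_{xy}$ vanishes; this is handled precisely because the superposition principle requires no regularity beyond Borel measurability and the $L^2$-bound against $\CC$, and because the forward-invariance of $\cP(\cX)$ described above keeps characteristics inside the domain where everything is consistent.
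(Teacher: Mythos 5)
Your proof takes the same route as the paper: view the lifted continuity equation as a classical one on $\R^\cX$ with drift $b=-\delta\VV=\cK(\cdot)\PPsi$, verify the $L^2(\CC(t)\,dt)$ bound on $b$ from the finite-action assumption, and invoke the superposition principle \cite[Prop.~8.2.1]{Ambrosio2008}; the converse via the chain rule along trajectories is likewise standard and matches. One small remark: the forward-invariance discussion is superfluous (and, as stated, not fully justified, since $b_x=0$ on $\{\nu_x=0\}$ alone does not imply invariance without further regularity) — the paper sidesteps it entirely by observing that $(e_t)_\#\Theta=\CC(t)$ is supported on the closed set $\cP(\cX)$ for every $t$, so $\Theta$-a.e.\ trajectory remains in $\cP(\cX)$ by continuity, with no flow-invariance argument needed.
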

We will also use the measure $\bar\Theta$ on $\AC([0,T],\cP(\cX))$ given by
\begin{align}\label{e:Lio:disint:barTheta}
  \bar \Theta(d c)=\int_{\cP(\cX)}\Theta(d\mu,dc)\;.
\end{align}
Therewith, note that \eqref{e:Lio:disint} can be rewritten as the pushforward
$\CC(t)=(e_t)_\#\bar\Theta$ under the evaluation map $e_t:\AC([0,T],\cP(\cX))\ni c\mapsto
c(t)\in\cP(\cX)$ defined for any $\varphi\in C_b^0(\cP(\cX))$ by
\begin{equation*}
 \int_{\cP(\cX)} \varphi(\nu) \; \mathbb{{C}}(t,d\nu) = \int_{\AC([0,T],\cP(\cX))} \varphi({c}(t)) \; \bar\Theta(d{c}) .
\end{equation*}

\begin{proof}
  Let $(\CC,\PPsi)\in \CCEE_T(\MM,\NN)$ be a solution of the
  continuity equation with finite action. Define
  $\VV:[0,T]\times\cP(\cX)\to\R^{\cX\times\cX}$ via
  $\VV_{xy}(t,\nu)=w_{xy}(\nu)\nabla_{xy}\PPsi(t,\nu)$ and note that
  $\delta\VV(t,\nu)= \cK(\nu) \PPsi(t,\nu)$. We view $\cP(\cX)$ as a
  subset of $\R^{\cX}$ and $\delta\VV$ as a time-dependent vector
  field on $\R^\cX$ and note that $(\CC,\delta\VV)$ is a solution to
  the classical continuity equation in weak form
  \begin{align*}
      &\int_0^T \int_{\R^{\cX}} \bra{ \dot\varphi(t,\nu)- \nabla\varphi(t,\nu) \delta\VV(t,\nu) } \CC(t,d\nu) \;dt = 0
  \end{align*}
  for all $\varphi\in C^1_c\big((0,T)\times \R^{\cX}\big)$. Moreover,
  note that for any $\PPsi\in \R^\cX$ we have by Jensens inequality
  \begin{align*}
    |\cK(\nu)\PPsi|^2 &= \sum_{x\in\cX}\bigg|\sum_{y\in\cX}w_{xy}(\nu)(\PPsi_x-\PPsi_y)\bigg|^2\\
    &\leq \sum_{x,y\in\cX}C_w w_{xy}(\nu)\left|(\PPsi_x-\PPsi_y)\right|^2
                  = C_w \cA(\nu,\PPsi(\nu))\;,
  \end{align*}
  with
  \[
    C_w := \max_{x,y\in \cX} \sup_{\nu\in \cP(\cX)} w_{xy}(\nu) = \max_{x,y\in \cX} \sup_{\nu\in \cP(\cX)} \Lambda\!\bra{ \nu_x Q_{xy}(\nu) , \nu_y Q_{yx}(\nu) } .
  \]
  Since $Q:\cP(\cX)\to \R_+$ is continuous, $C_w$ is finite. This yields
  the integrability estimate
  \begin{align*}
    \int_0^T\int_{\R^d}|\delta\VV(t,\nu)|^2 \; d\CC(t,\nu)\;dt
    \leq
    C \int_0^T\AA(\CC(t),\PPsi(t))\; dt < \infty\;.
  \end{align*}
  Now, by the representation result
  \cite[Proposition~8.2.1]{Ambrosio2008} for the classical continuity
  equation there exists a probability measure~$\Theta$ on $\R^\cX \times
  \AC([0,T],\R^\cX)$ such that any $(\mu,{c}) \in \operatorname{supp}
  \Theta$ satisfies $c(0)=\mu$ and $\dot{c}(t) = \delta\VV(t,c(t))$ in
  the sense weak sense~\eqref{e:MF:continuity_equ:weak}
  and moreover, \eqref{e:Lio:disint} holds with
  $\cP(\cX)$ replaced by $\R^\cX$. Since, $\CC(t)$ is supported on
  $\cP(\cX)$ we find that $\Theta$ is actually a measure on $\cP(\cX)
  \times \AC([0,T],\cP(\cX))$, where absolute continuity is understood
  still w.r.t.~the Euclidean distance. To see that $\Theta$ is the
  desired measure it remains to check that for $\Theta$-a.e. $(\mu,c)$
  we have that $c$ is a curve of finite action. But this follows by
  observing that \eqref{e:Lio:disint} implies
  \begin{align*}
    \int\int_0^T\cA(c(t),\PPsi(t,c(t)))\; dt\; \Theta(d\mu,dc) =
    \int_0^T\AA(\CC(t),\PPsi(t))\; dt<\infty\;.
  \end{align*}
  This finishes the proof of the first statement.

  The converse, statement follows in the same way as in \cite[Proposition~8.2.1]{Ambrosio2008}.
\end{proof}
\begin{proposition}[Identification with Wasserstein distance]
  The distance $\WW$ defined in~\eqref{e:Lio:W} coincides with the
  $L^2$-Wasserstein distance on $\cP(\cP(\cX))$ w.r.t.~the distance
  $\cW$ on $\cP(\cX)$. More precisely, for $\MM,\NN\in \cP(\cP(\cX))$
  there holds
   \begin{equation*}
     \WW^2(\MM,\NN) = W_{\cW}^2(\MM,\NN) := \inf_{\GG\in \Pi(\MM,\NN)} \set{ \int_{\cP(\cX)\times\cP(\cX)} \cW^2(\mu,\nu) \ d\GG(\mu,\nu) } ,
   \end{equation*}
   where $\Pi(\MM,\NN)$ is the set of all probability measures on
   $\cP(\cX) \times \cP(\cX)$ with marginals $\MM$ and $\NN$.
\end{proposition}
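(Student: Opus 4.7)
The plan is to show both inequalities $\WW^2 \leq W_\cW^2$ and $\WW^2 \geq W_\cW^2$ by converting between dynamical objects (solutions of the lifted continuity equation) and static couplings, using the probabilistic representation result Proposition~\ref{prop:Lio:representation} together with geodesics on $(\cP(\cX),\cW)$ provided by Proposition~\ref{prop_metric}.

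For the inequality $\WW^2 \geq W_\cW^2$, I would take any $(\CC,\PPsi)\in\CCEE_1(\MM,\NN)$ of finite action and apply Proposition~\ref{prop:Lio:representation} to obtain a measure $\Theta$ on $\cP(\cX)\times \AC([0,1],\cP(\cX))$ concentrated on solutions of $\dot c(t) = \cK(c(t))\PPsi(t,c(t))$ with $c(0)=\mu$. For $\Theta$-a.e.\ $(\mu,c)$ the pair $(c,\PPsi(\cdot,c(\cdot)))$ lies in $\CE_1(c(0),c(1))$, whence by Definition~\ref{def:def:W}
\[
\cW^2(c(0),c(1)) \leq \int_0^1 \cA(c(t),\PPsi(t,c(t)))\, dt.
\]
Setting $\GG := (e_0,e_1)_\# \bar\Theta$ gives a coupling of $\MM$ and $\NN$, and integrating the above against $\Theta$ using \eqref{e:Lio:disint} yields
\[
W_\cW^2(\MM,\NN) \leq \int \cW^2(\mu,\nu)\, \GG(d\mu,d\nu) \leq \int_0^1 \AA(\CC(t),\PPsi(t))\, dt.
\]
Taking the infimum over admissible $(\CC,\PPsi)$ gives the desired inequality.

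For the reverse inequality $\WW^2 \leq W_\cW^2$, I would start with an arbitrary coupling $\GG\in\Pi(\MM,\NN)$. By Proposition~\ref{prop_metric}, for each $(\mu,\nu)$ there exists a constant-speed $\cW$-geodesic $\gamma^{\mu,\nu}$ and, by Proposition~\ref{accurves}, an associated potential $\psi^{\mu,\nu}$ with $(\gamma^{\mu,\nu},\psi^{\mu,\nu})\in\CE_1(\mu,\nu)$ attaining $\int_0^1\cA(\gamma^{\mu,\nu}(t),\psi^{\mu,\nu}(t))\,dt = \cW^2(\mu,\nu)$. A measurable selection argument (the set of minimizers depending Borel-measurably on the endpoints) produces a Borel map $(\mu,\nu)\mapsto(\gamma^{\mu,\nu},\psi^{\mu,\nu})$. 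I then set $\bar\Theta$ to be the push-forward of $\GG$ under this map and define $\CC(t) := (e_t)_\#\bar\Theta$. The key step is to define the averaged potential $\PPsi(t,\cdot)$ as the conditional expectation of $\psi^{\mu,\nu}(t)$ given $\gamma^{\mu,\nu}(t)=\cdot$ under $\bar\Theta$, i.e.\ via the disintegration of $(\cdot)_t$--$\bar\Theta$ with respect to $\CC(t)$. Linearity of the weak continuity equation in $\PPsi$ implies that $(\CC,\PPsi)\in\CCEE_1(\MM,\NN)$, while convexity of $\cA(\mu,\cdot)$ (it is a positive semidefinite quadratic form) combined with Jensen's inequality for conditional expectations yields
\[
\AA(\CC(t),\PPsi(t)) \leq \int \cA(\gamma^{\mu,\nu}(t),\psi^{\mu,\nu}(t))\, \GG(d\mu,d\nu).
\]
Integrating in time gives $\int_0^1\AA(\CC(t),\PPsi(t))\,dt \leq \int \cW^2(\mu,\nu)\,\GG(d\mu,d\nu)$, and taking the infimum over $\GG$ concludes.

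The main obstacle is the second direction: ensuring the measurable selection of $\cW$-geodesics and their associated potentials, and then verifying that the conditional-expectation-based $\PPsi$ satisfies the lifted continuity equation in the weak sense of Definition~\ref{def:Lio:CE}. Both ingredients are standard in the theory of Wasserstein spaces over Polish spaces (cf.\ \cite[Chapter~8]{Ambrosio2008}), but the measurability has to be carried out carefully using that $(\cP(\cX),\cW)$ is complete and separable and that the action functional $\cA$ is jointly lower semicontinuous. Once the averaged $\PPsi$ is in place, the Jensen step and the verification of the weak continuity equation are routine, since both the continuity equation (in the form \eqref{e:Lio:continuity_equ:weak}) and the map $\psi\mapsto w(\mu)\nabla\psi$ are linear in $\psi$.
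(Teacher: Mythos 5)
Your proposal is correct and follows essentially the same two-step strategy as the paper: use the probabilistic representation (Proposition~\ref{prop:Lio:representation}) to convert a finite-action pair $(\CC,\PPsi)$ into a coupling $\GG=(e_0,e_1)_\#\bar\Theta$ for the $\geq$ direction, and superpose $\cW$-optimal curves under a given coupling $\GG$ for the $\leq$ direction. The only notable deviations are in the second direction: the paper chooses $\varepsilon$-optimal pairs $(c^{\mu,\nu},v^{\mu,\nu})\in\vec\CE_1(\mu,\nu)$ and works with the vector-field formulation \eqref{e:Lio:W-alt} of $\WW$, defining $\VV$ as the density of the superposed momentum measure with respect to the superposed $\CC$, whereas you select exact geodesics together with their potentials $\psi^{\mu,\nu}$ and average those. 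Both variants hinge on the same Jensen/convexity inequality for the action and on a measurable selection of (near-)minimizers; the paper's $\varepsilon$-minimizer version makes the selection slightly softer and avoids appealing to geodesic existence (Proposition~\ref{prop_metric}), while your version gives a cleaner final estimate without an explicit $\varepsilon$. Your remark that the averaged $\PPsi$ must be shown to satisfy the lifted continuity equation is precisely the point the paper handles implicitly by working with $\vec\CCEE$ and invoking linearity of the continuity equation in the momentum variable; if you set it up in the $\vec\CCEE$ framework first and then apply Proposition~\ref{prop:Lio:GradientFields} to pass to a gradient potential, as the paper does, that concern becomes routine.
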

\begin{proof}
  We first show the inequality ``$\geq$''. For $\eps>0$ let
  $(\CC,\PPsi)$ be a solution to the continuity equation such that
  $\int_0^1\AA(\CC(t),\PPsi(t))dt\leq \WW^2(\MM,\NN)+\eps$ and let
  $\bar\Theta$ be the measure on $\AC([0,T],\cP(\cX))$ given by the
  previous Proposition. Then we obtain a coupling $\GG\in\Pi(\MM,\NN)$
  by setting $\GG=(e_0,e_1)_\#\bar\Theta$. This yields
  \begin{align*}
    W_{\cW}^2(\MM,\NN) &\leq \int \cW^2(\mu,\nu) \ d\GG(\mu,\nu) = \int
    \cW^2(c(0),c(1))d\bar\Theta\\
    & \leq
    \int\int_0^1\cA(c(t),\PPsi(t,c(t)))dt d\bar\Theta(c)=\int_0^1\AA(\CC(t),\PPsi(t))dt \leq \WW^2(\MM,\NN)+\eps\;.
  \end{align*}
  Since $\eps$ was arbitrary this yields the inequality ``$\geq$''.

  To prove the converse inequality ``$\leq$'', fix an optimal coupling
  $\GG$, fix $\eps>0$ and choose for $\GG$-a.e.~$(\mu,\nu)$ a couple
  $(c^{\mu,\nu},v^{\mu,\nu})\in\vec\CE_1(\mu,\nu)$ such that
  \begin{equation*}
    \int_0^1\vec\cA(c^{\mu,\nu}(t),v^{\mu,\nu}(t))dt\leq \cW(\mu,\nu) +\eps.
  \end{equation*}
  Now, define a family of measures $\CC:[0,1]\to\cP(\cP(\cX))$
  and a family of vector-valued measures
  $V:[0,1]\to\cP(\cP(\cX);\R^{\cX\times\cX})$ via
  \begin{align*}
    &d\CC(t,\tilde{\nu}) = \int_{\cP(\cX)\times\cP(\cX)} d\delta_{c^{\mu,\nu}(t)}(\tilde{\nu})d\GG(\mu,\nu)\;,\\
   &V(t,\tilde{\nu})=\int_{\cP(\cX)\times\cP(\cX)}v^{\mu,\nu}(t)d\delta_{c^{\mu,\nu}(t)}(\tilde{\nu}) d\GG(\mu,\nu)\;.
  \end{align*}
  Note that $V(t)\ll\CC(t)$ and define
  $\VV:[0,1]\times\cP(\cX)\to\R^{\cX\times\cX}$ as the density of $V$
  w.r.t.~$\CC$. By linearity of the continuity equations have that
  $(\CC,\VV)\in\vec\CCEE_1(\MM,\NN)$. Moreover, we find
 \begin{align*}
    \int_0^1\vec\AA(\CC(t),\VV(t))dt &= \int_0^1\int \vec\cA(c^{\mu,\nu}(t),v^{\mu,\nu}(t))d\GG(\mu,\nu)dt\\
    &\leq \int \cW^2(\mu,\nu) d\GG(\mu,\nu) +\eps = W^2_\cW(\MM,\NN) +\eps\;.
  \end{align*}
  Since $\eps$ was arbitrary, in view of \eqref{e:Lio:W-alt} this
  finishes the proof.
\end{proof}
Finally, we can obtain a gradient flow structure for the Liouville
equation \eqref{e:Liouville} in a straightforward manner by averaging
the gradient flow structure of the underlying dynamical system.

To this end, given $\MM\in \cP(\cP(\cX))$ define the free energy by
 \begin{equation*}
   \FF(\MM) := \int_{\cP(\cX)} \cF(\nu) \; \MM(d\nu)\;,
 \end{equation*}
 and define the Fisher information by
 \begin{equation*}
   \II(\MM) := \AA(\MM,-D\FF) = \int_{\cP(\cX)} \cI(\nu) \; \MM(d\nu) .
 \end{equation*}
\begin{proposition}[Gradient flow structure for Liouville equation]\label{prop:Lio:GF}
  The Liouville equation \eqref{e:Liouville} is the gradient flow of
  $\FF$ w.r.t.~$\WW$. Moreover precisely, $\sqrt{\II}$ is a strong
  upper gradient for $\FF$ on the metric space $(\cP(\cP(\cX)),\WW)$
  and the curves of maximal slope are precisely the solutions to
  \eqref{e:Liouville}. In other words, for any absolutely continuous
  curve $\mathbb{{C}}$ in $\cP(\cP(\cX))$ holds
 \begin{equation}\label{e:Lio:LDfunctional}
   \JJ(\CC) := \FF(\mathbb{{C}}(T)) - \FF(\mathbb{{C}}(0)) + \frac{1}{2} \int_0^T \II(\mathbb{{C}}(t)) \; dt + \frac{1}{2} \int_0^T \AA(\mathbb{{C}}(t),\mathbb{\Psi}(t)) \; dt \geq 0
 \end{equation}
 with $(\CC(t),\mathbb{\Psi}(t))\in \CCEE_T$. Moreover, $\JJ(\CC)=0$
 if and only if $\mathbb{{C}}$ solves~\eqref{e:Liouville:weak}.
\end{proposition}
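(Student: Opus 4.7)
The plan is to lift the base-space energy-dissipation identity of Proposition~\ref{prop_max_slope_nonlin} to the level of random measures using the probabilistic representation of Proposition~\ref{prop:Lio:representation}. The key observation is that every object appearing in $\JJ$ is defined by averaging the corresponding base-space object against $\CC(t)$, and $\CC(t) = (e_t)_\#\bar\Theta$, so Fubini allows us to trade an integral over $\cP(\cX)$ against $\CC(t,d\nu)$ for an integral over curves against $\bar\Theta$.

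First I would establish that $\sqrt{\II}$ is a strong upper gradient. Let $\CC$ be absolutely continuous with $(\CC,\PPsi)\in\CCEE_T$; by Proposition~\ref{prop:Lio:GradientFields} we may assume the representing vector field is of gradient form. Applying Proposition~\ref{prop:Lio:representation}, we obtain $\bar\Theta$ such that $\bar\Theta$-a.e.\ $c\in\AC([0,T],\cP(\cX))$ solves $\dot c(t)=\cK(c(t))\PPsi(t,c(t))$, hence $(c,\PPsi(\cdot,c(\cdot)))\in\CE_T$. The strong upper gradient estimate \eqref{e:strongupper} applied pointwise to each such $c$, followed by integration against $\bar\Theta$ and the Cauchy--Schwarz inequality on $\bar\Theta$, gives
\begin{align*}
 |\FF(\CC(t_2))-\FF(\CC(t_1))|
  &\leq \int |\cF(c(t_2))-\cF(c(t_1))|\,d\bar\Theta(c) \\
  &\leq \int\!\!\int_{t_1}^{t_2}\!\!\sqrt{\cI(c(t))}\sqrt{\cA(c(t),\PPsi(t,c(t)))}\,dt\,d\bar\Theta(c) \\
  &\leq \int_{t_1}^{t_2}\!\!\sqrt{\II(\CC(t))}\sqrt{\AA(\CC(t),\PPsi(t))}\,dt,
\end{align*}
where the last step uses $\CC(t)=(e_t)_\#\bar\Theta$ to identify the marginals. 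This is exactly the strong upper gradient property, and Young's inequality then yields $\JJ(\CC)\geq 0$.

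Next I would characterize the equality case. Writing $\cJ(c)$ for the base-space functional from Proposition~\ref{prop_max_slope_nonlin} evaluated along $c$ with its natural gradient potential $\PPsi(\cdot,c(\cdot))$, Fubini's theorem together with the pushforward identity $\CC(t)=(e_t)_\#\bar\Theta$ yields the crucial disintegration
\[
 \JJ(\CC) \;=\; \int \cJ(c)\,d\bar\Theta(c).
\]
Since $\cJ(c)\geq 0$ for every $c$, we see that $\JJ(\CC)=0$ if and only if $\cJ(c)=0$ for $\bar\Theta$-a.e.\ $c$, which by Proposition~\ref{prop_max_slope_nonlin} is equivalent to $\bar\Theta$-a.e.\ $c$ being a solution to $\dot c = cQ(c)$. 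If this holds, then one can test \eqref{e:Liouville:weak} against any $\varphi\in C^1_c([0,T]\times\cP(\cX))$ and use $\CC(t)=(e_t)_\#\bar\Theta$ together with the fundamental theorem of calculus for each $\bar\Theta$-a.e.\ $c$ to conclude that $\CC$ solves the Liouville equation. Conversely, if $\CC$ solves \eqref{e:Liouville:weak}, then by uniqueness of the weak solution to the linear continuity equation with Lipschitz drift $\nu\mapsto \nu Q(\nu)$, $\CC$ coincides with the pushforward under the associated flow and $\bar\Theta$-a.e.\ curve must solve $\dot c = cQ(c)$, so $\JJ(\CC)=0$.

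The main technical point is ensuring that the disintegration $\JJ(\CC)=\int\cJ(c)\,d\bar\Theta$ really does hold with the correct integrands; this requires that the gradient potential $\PPsi(t,\cdot)$ chosen for $\CC$ induces, via $\PPsi(t,c(t))$, an admissible gradient potential along $\bar\Theta$-a.e.\ characteristic $c$, so that both $\AA(\CC(t),\PPsi(t))$ and $\II(\CC(t))$ disintegrate as $\int\cA(c(t),\PPsi(t,c(t)))\,d\bar\Theta$ and $\int\cI(c(t))\,d\bar\Theta$ respectively. Once this admissibility and the finiteness of the relevant integrals (guaranteed by the finite-action assumption built into $\CCEE_T$) are verified, everything else reduces cleanly to the already-proven base-space statement.
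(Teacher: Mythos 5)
Your proposal is correct and follows essentially the same route as the paper: both proofs (i) integrate the base-space strong upper gradient inequality against the probabilistic representation $\bar\Theta$ from Proposition~\ref{prop:Lio:representation} and then apply Jensen/Cauchy--Schwarz to pull the integral inside the square roots, and (ii) disintegrate $\JJ(\CC)=\int\cJ(c)\,d\bar\Theta(c)$ to reduce the characterization of the equality case to Proposition~\ref{prop_max_slope_nonlin}. The only minor deviation is in the ``only if'' direction: the paper uses the converse part of Proposition~\ref{prop:Lio:representation} and the positivity $c(t)\in\cP^*(\cX)$ to identify $\PPsi(t,c(t))=-D\cF(c(t))$ up to a constant, whereas you instead test \eqref{e:Liouville:weak} directly against $\varphi$ and apply the fundamental theorem of calculus along each $\bar\Theta$-generic characteristic; the two are equivalent since the only role of identifying $\PPsi$ is to obtain the drift $\nu Q(\nu)$ along characteristics, which you get directly from $\cJ(c)=0$. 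Your handling of the ``if'' direction via uniqueness of the linear continuity equation with Lipschitz drift is also a legitimate way to make precise what the paper calls ``easily verified from the definition.''
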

\begin{proof}
  Let $\bar\Theta$ be the disintegration of $\CC$
  from Proposition~\ref{prop:Lio:representation}
  defined in~\eqref{e:Lio:disint:barTheta}. The fact that $\sqrt{\II}$ is a strong
  upper gradient of $\FF$ can be seen by integrating its defining inequality on the underlying level~\eqref{e:strongupper} w.r.t.~$\bar\Theta$
  \begin{align*}
    \abs{\FF(\CC(t_2)) - \FF(\CC(t_1))} &\leq \int_{\AC([0,T];\cP(\cX))}
    \abs{\cF(c(t_2))-\cF(c(t_1))} \; \bar\Theta(dc) \\
    &\leq \int_{\AC([0,T];\cP(\cX))} \int_{t_1}^{t_2}\sqrt{\cI(c(t))}\sqrt{\cA(c(t),\psi(t))} \; dt \; \bar\Theta(dc).
  \end{align*}
  Then, using Jensen's inequality on the concave function $(a,b)\mapsto
  \sqrt{ab}$ , we get the strong upper gradient property for $\sqrt{\II}$.

  The ``if'' part of the last claim is easily verified from the definition.
  Now, assume that $\JJ(\CC) = 0$. Since $\CC$ is absolutely continuous, we can apply Proposition~\ref{prop:Lio:representation} and obtain the probabilistic representation $\bar\Theta\in \cP\bra{\AC([0,T]\times \cP(\cX))}$~\eqref{e:Lio:disint:barTheta} such that $\CC(t) = (e_t)_\# \bar\Theta$. Then, \eqref{e:Lio:LDfunctional} can be obtained by just
  integrating $\cJ$ from \eqref{e:LDfunctional} along $\bar\Theta$ and it holds $\cJ(c) = 0$ for $\bar\Theta$-a.e.~$c\in \AC([0,T],\cP(\cX))$. These $c$ are by Proposition
  \ref{prop_max_slope_nonlin} solutions to~\eqref{e:GF:KDF} and satisfy
  $c(t)\in \cP^*(\cX)$ for all $t>0$. Then, we can conclude by the converse statement of Proposition~\ref{prop:Lio:representation} that $\cK(c(t)) \PPsi(t,c(t)) = - \cK(c(t)) D\cF(c(t))$, which implies since $c(t)\in\cP^*(\cX)$ for $t>0$ up to a constant that $\PPsi(t,c(t)) = - D\cF(c(t))$ and hence $\CC$ solves~\eqref{e:Liouville:weak}.
\end{proof}
\section{From weakly interacting particle systems to mean field systems}\label{S:Limit}
In this section, we will show how the gradient flow structure we
described in the previous sections arises as the limit of gradient
flow structures for $N$-particle systems with mean field interactions,
in the limit $N \to \infty$. Moreover, we show that the empirical
distribution of the $N$-particle dynamics converges to a solution of
the non-linear equation \eqref{eq_evol_nonlin}.
\begin{notation}
  For $N$ an integer bold face letters are elements connected to the space $\cX^N$ and hence implicitly depending on $N$. Examples are vectors $\bx,\by\in \cX^N$, matrices $\bm{Q}\in \R^{\cX^N\times \cX^N}$ or measures $\bm\mu\in \cP(\cX^N)$. For $i \in \set{1,\dots, N}$ let $\be^i$ be the placeholder for $i$-th particle, such that $\bx \cdot \be^i = x_i\in \cX$ is the position of the $i$-th particle. For $\bx  \in \cX^N$ and $y\in \cX$ we denote by $\bx^{i;y}$ the particle system obtained from $\bx$ where the $i$-th particle jumped to site $y$
  \[
    \bx^{i;y} := \bx - (x_i -y) \be^i = (x_{1} ,\dots,x_{i-1},y,x_{i+1},\dots,x_{N}) .
  \]
  $L^N(\bx)$ will denote the empirical distribution for $\bx\in \cX^N$, defined by
  \begin{equation}\label{e:def:EmipiricalDistribution}
    L^N(\bx) := \frac{1}{N} \sum_{i=1}^N \delta_{x_i}
  \end{equation}
  We introduce the discretized simplex $\cP_N(\cX)\subset \cP(\cX)$, given by
  \begin{equation*}
    \cP_N(\cX) := \set{ L^N(\bx) : \bx \in \cX^N} .
  \end{equation*}
\end{notation}
Let us introduce a natural class of mean-field dynamics for the $N$-particle system.
We follow the standard procedure outlined in Remark~\ref{rem:MarkovKernel}.

In analog to Definition~\ref{def:GibbsPotential}, we fix $K:\mathcal{P}(\mathcal{X})\times\mathcal{X}\rightarrow\R$ such that for each $x\in \mathcal{X}, K_{x}$ is a twice continuously
differentiable function on $\mathcal{P}(\mathcal{X})$ and set $U(\mu):= \sum_{x\in \cX} \mu_x K_x(\mu)$.
For every natural number $N$ define the probability measure $\bm\pi^N$ for $\bm x\in \cX^N$ by
\begin{equation}\label{e:def:piN}
  \bm{\pi}^{N}_{\bm{x}} := \frac{1}{\bm Z^{N}}\exp\left(-N U\bra{L^N \bm x}\right),
\end{equation}
and $\bm Z^{N} := \sum_{\bm{x}\in\mathcal{X}^{N}}\exp\bra{-N U\bra{L^N \bm x}}$ is the partition sum. This shall be the invariant measure of the particle system and is already of mean-field form.

To introduce the dynamics, we use a family $\set{A^N(\mu)\in \R^{\cX\times \cX}}_{\mu \in \cP_N(\cX)}$ of irreducible symmetric matrices and define the rate matrices $\set{Q^{N}(\mu)\in \R^{\cX \times \cX}}_{\mu\in \cP_N(\cX)}$ for any $\bm x\in \cX^N, y\in \cX, i\in\set{1,\dots N}$ by
\begin{align}\label{e:def:QN}
  Q^N_{x_i,y}(L^N \bm x) &:= \sqrt{\frac{\bm \pi^N_{\bm x^{i;y}}}{\bm \pi^N_{\bm x}}}\  A_{x,y}^N(L^N\bm x)\\
  &= \exp\bra{-\tfrac{N}{2}\bra{U(L^N \bm x^{i;y})-U(L^N \bm x)}} A_{x,y}^N(L^N\bm x) .\notag
\end{align}
Finally, the actual rates of the $N$-particle system are given in terms of the rate
  matrix
\begin{equation}\label{e:def:bQ}
  \bm Q^{N}\in \R^{\cX^N \times \cX^N}: \qquad \bm{Q}^{N}_{\bx,\bx^{i;y}}:= Q_{x_i ,y}^N(L^N(\bx)) .
\end{equation}
By construction $\bm Q^N$ is irreducible and reversible w.r.t.~the unique invariant measure~$\bm \pi^N$.
\begin{remark}
  The irreducible family of matrices $\set{A^N(\mu)}_{\mu\in\cP(\cX)}$ encodes the underlying
  graph structure of admissible jumps and also the rates of the jumps. For instance,
  $A^N_{x,y}(\nu) = \alpha_{x,y}$ for any symmetric adjacency matrix $\alpha \in \set{0,1}^{\cX\times\cX}$ corresponds to Glauber dynamics on the corresponding graph. Another choice is $A^N_{x,y}(L^N \bm x) := \exp\bra{-\tfrac{N}{2}\abs{U(L^N \bm x)- U(L^N \bm x^{i;y})}}$, which corresponds to Metropolis dynamics on the complete graph. In particular, all of these examples satisfy Assumption~\ref{assume:A}.
\end{remark}
\begin{assume}[Lipschitz assumptions on rates]\label{assume:A}
  There exists a family of irreducible symmetric matrices $\set{A(\mu)}_{\mu\in \cP(\cX)}$ such that  $\mu \mapsto A(\mu)$ is Lipschitz continuous on $\cP(\cX)$ and the family $\set{A^N(\mu)}_{\mu \in \cP(\cX), N\in\N}$ of irreducible symmetric matrices satisfies
  \begin{equation*}
    \forall x,y\in \cX: \qquad A^N_{x,y} \to A_{x,y} \qquad\text{on } \cP(\cX) \text{ as } N\to \infty .
  \end{equation*}
\end{assume}
\begin{lemma}\label{lem:UniConvGammaN}
  Assume $\set{Q^{N}(\mu)\in \R^{\cX \times \cX}}_{\mu\in \cP_N(\cX)}$
  is given by~\eqref{e:def:QN} with $A^N$ satisfying Assumption~\ref{assume:A}, then for all $x,y\in \cX$
\begin{equation}\label{ass:UniConvGammaN}
   Q^N_{x,y} \to Q_{x,y} \qquad \text{on $\cP(\cX)$}
\end{equation}
with $Q_{x,y}(\mu) = \sqrt{\frac{\pi_y(\mu)}{\pi_x(\mu)}} A_{x,y}(\mu)$ with $\pi$ given in~\eqref{e:def:piH}.
In particular, $\mu\mapsto Q_{x,y}(\mu)$ is Lipschitz continuous on $\cP(\cX)$ for all $x,y\in \cX$.
\end{lemma}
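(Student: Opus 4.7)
The plan is to reduce the convergence $Q^N_{x,y}(\mu) \to Q_{x,y}(\mu)$ to a Taylor expansion of the potential $U$ around a point of $\cP_N(\cX)$, combined with the assumed convergence $A^N \to A$. The Lipschitz property of the limit will then follow from the regularity of the various factors in the product formula for $Q_{x,y}$.

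First, I would fix $\bm x \in \cX^N$ with $x_i = x$ and set $\mu := L^N \bm x \in \cP_N(\cX)$. The key observation is that $L^N \bm x^{i;y} - \mu = \tfrac{1}{N}(e_y - e_x) \in \R^\cX$, a perturbation of order $1/N$. Since $K_z$ is twice continuously differentiable on a neighbourhood of the compact simplex $\cP(\cX)$ for each $z \in \cX$, so is $U$, and its second derivatives are uniformly bounded on $\cP(\cX)$. Taylor's theorem therefore yields
\begin{equation*}
  U(L^N \bm x^{i;y}) - U(\mu) = \frac{1}{N}\bigl(H_y(\mu) - H_x(\mu)\bigr) + O(1/N^2),
\end{equation*}
with the remainder uniform in $\bm x$ and $i$. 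Multiplying by $-N/2$ and exponentiating gives
\begin{equation*}
  \exp\!\Bigl(-\tfrac{N}{2}\bigl(U(L^N \bm x^{i;y}) - U(\mu)\bigr)\Bigr) = \exp\!\Bigl(-\tfrac{1}{2}(H_y(\mu)-H_x(\mu))\Bigr) + O(1/N),
\end{equation*}
again uniformly, because $H_z$ is continuous and hence bounded on $\cP(\cX)$, so the exponential is Lipschitz on the relevant range. The prefactor on the right equals $\sqrt{\pi_y(\mu)/\pi_x(\mu)}$ by the definition \eqref{e:def:piH} and the fact that the normalising constant $Z(\mu)$ cancels.

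Combining this with Assumption \ref{assume:A}, which supplies $A^N_{x,y} \to A_{x,y}$, I obtain
\begin{equation*}
  Q^N_{x,y}(\mu) = \exp\!\Bigl(-\tfrac{N}{2}\bigl(U(L^N \bm x^{i;y}) - U(\mu)\bigr)\Bigr)\, A^N_{x,y}(\mu) \longrightarrow \sqrt{\frac{\pi_y(\mu)}{\pi_x(\mu)}}\, A_{x,y}(\mu) = Q_{x,y}(\mu)
\end{equation*}
as $N \to \infty$, where both factors are bounded so the product converges.

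For the Lipschitz property of $Q_{x,y}$ on $\cP(\cX)$, I would argue factor by factor. The function $A_{x,y}$ is Lipschitz by Assumption \ref{assume:A}. The maps $H_z = \partial_{\mu_z} U$ are continuously differentiable on a neighbourhood of $\cP(\cX)$ (since $U$ is $C^2$) and hence Lipschitz on the compact set $\cP(\cX)$; moreover they are bounded, so $\sqrt{\pi_y/\pi_x} = \exp\!\bigl(-\tfrac{1}{2}(H_y - H_x)\bigr)$ is a composition of a uniformly Lipschitz function with bounded Lipschitz functions, hence Lipschitz. Finally, $A_{x,y}$ is bounded on $\cP(\cX)$ by continuity, so the product of the two bounded Lipschitz functions is Lipschitz. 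The main subtlety I anticipate is tracking that the Taylor remainder truly is $O(1/N^2)$ uniformly in $\bm x$ and $i$, which relies on the fact that $\|e_y - e_{x_i}\| \leq \sqrt 2$ and that the Hessian of $U$ is uniformly bounded on an open neighbourhood of the compact simplex; once this uniformity is in place, the rest is a routine continuity and boundedness argument.
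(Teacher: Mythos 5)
Your proposal is correct and follows essentially the same route as the paper: a first-order Taylor expansion of $U$ at $\mu = L^N\bm x$ in the direction $\frac{1}{N}(e_y - e_x)$ (the paper delegates this step to a citation of Budhiraja et al.), exponentiation to recover $\sqrt{\pi_y/\pi_x}$ with the normalising constant $Z(\mu)$ cancelling, combination with $A^N\to A$, and a factor-by-factor Lipschitz argument using the $C^2$ regularity of $U$. If anything your write-up is more careful than the paper's, which silently elides the $\log$/$\exp$ between the ratio $\bm\pi^N_{\bm x^{i;y}}/\bm\pi^N_{\bm x}$ and its Taylor-expanded exponent and omits the $\sqrt{\cdot}$ bookkeeping.
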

\begin{proof}
By~\cite[Lemma 4.1]{Budhiraja2014a} holds for $\bm x\in \cX^N$ with $\mu=L^N \bm x$, $y\in \cX$ and $i\in \set{1,\dots, N}$
\begin{align*}
 \frac{\bm \pi^N_{\bm x^{i;y}}}{\bm \pi^N_{\bm x}} &=  N\bra{U(L^N\bm x) - U(L^N\bm x^{i;y})} = \partial_{\mu_x}U(\mu) - \partial_{\mu_y}U(\mu) + O(N^{-1}) \\
 &= \frac{\pi_y(\mu)}{\pi_x(\mu)} + O(N^{-1}),
\end{align*}
which shows by Assumption~\ref{assume:A} the convergence statement. The Lipschitz continuity follows, since $A$ is assumed Lipschitz and
the function $\mu \mapsto \partial_{\mu_x} U(\mu) = \mu_x + \sum_{y} \mu_y \partial_{\mu_x} K_y(\mu)$ is continuously differentiable, since $K$ is assumed twice continuously differentiable.
\end{proof}
\begin{remark}
  The mean-field behavior is manifested in the convergence
  statement \eqref{ass:UniConvGammaN}.  The typical example we have in
  mind, as presented in Section 4 of \cite{Budhiraja2014a}, is as
  follows: the mean-field model is described by
  $$K_{x}(\mu) := V(x) + \underset{y}{\sum} \hspace{1mm} W(x,y)\mu_y$$
  where $V$ is a potential energy and $W$ an interaction energy
  between particles on sites $x$ and $y$. For the $N$ particle system,
  we can use a Metropolis dynamics, where possible jumps are those
  between configurations that differ by the position of a single
  particle, and reversible with respect to the measure
  $$\mathbf{\pi}^N_{\bx} = A^{-1}\exp(-U^N(\bx)); \hspace{3mm} U^N(\bx) := \underset{i=1}{\stackrel{N}{\sum}} \hspace{1mm} V(x_i) + \frac{1}{N}\underset{i=1}{\stackrel{N}{\sum}}\underset{j=1}{\stackrel{N}{\sum}} \hspace{1mm} W(x_i,x_j).$$
  Note, the by the definition of the $L^N$, we have the identity
  \[
    U^N(\bm x) = N U(L^N\bm x) \quad\text{with}\quad U(\mu) = \sum_{x} \mu_x K_x(\mu),
  \]
  which makes it consistent with Definition~\ref{e:def:piN}.
  This is a typical class of mean-field spin systems from statistical mechanics.

  In particular, the Curie-Weiss mean-field spin model for
  ferromagnetism is obtained by choosing $\cX=\set{-,+}$,
  $V(-)=V(+)=W(-,-)=W(+,+)= 0$ and $W(-,+)=W(+,-)=\beta>0$. This is
  among the simplest models of statistical mechanics showing a phase
  transition in the free energy
  \[
    \cF_\beta(\mu) := \sum_{\sigma\in \set{-,+}}\bra{\log\mu_\sigma + K_\sigma(\mu)}\mu_\sigma = \mu_{-} \log \mu_{-} + \mu_+ \log \mu_+ + 2\beta \mu_{-} \mu_+
  \]
  at $\beta=1$. For $\beta \leq 1$ the free energy is convex whereas
  for $\beta> 1$ it is non-convex on~$\cP(\cX)$. We will investigate
  this phase transition on the level of curvature for the mean-field
  system as well as for the finite particle system in future work.
\end{remark}
\subsection{Gradient flow structure of interacting \texorpdfstring{$N$}{N}-particle systems}
The $N$-particle dynamics on $\cX^N$ is now defined by the rate matrix $\bm{Q}^{N}$ given as in~\eqref{e:def:bQ} with the generator
\begin{equation}\label{e:def:PartGenerator}
 \mathcal{L}^{N}f := \sum_{i=1}^N \sum_{y\in \cX} (f({\bm{x}}^{i;y}) - f({\bm{x}}))\bm{Q}^N_{{\bm{x}}, {\bm{x}}^{i;y}}  .
\end{equation}
Likewise the evolution of an initial density $\bm\mu_0 \in \cP(\cX^N)$ satisfies
\begin{equation}\label{e:def:PartEvol}
 \dot{\bm{c}}(t) = \bm{c}(t) \bm{Q}^{N}   .
\end{equation}
Since by construction the rate matrix $\bm Q^N$ defined in~\eqref{e:def:bQ}
satisfy the detailed balance condition w.r.t.~$\bm\pi^N$~\eqref{e:def:piN},
this is the generator of a reversible Markov process w.r.t.~$\bm\pi^N$ on the finite space
$\cX^N$. Hence, we can use the framework developed in \cite{Maas2011} and
\cite{Mielke2013} to view this dynamics as a gradient flow of the
relative entropy with respect to its invariant measure. Let us
introduce the relevant quantities.

We define the relative entropy $\mathcal{H}(\bm{\mu}\mid\bm{\pi}^N)$
for $\bm{\mu},\bm{\pi}^N\in \mathcal{P}(\mathcal{X}^{N})$ by setting
\begin{equation*}
 \bm\cF^{N}(\bm\mu) := \cH(\bm\mu \mid \bm\pi^N) = \sum_{\bx\in \cX^N} \bm\mu_{\bx} \log \frac{ \bm\mu_{\bx} }{\bm\pi^N_{\bx}}\;.
\end{equation*}
Furthermore we define the action of a pair $\bm{\mu}\in\cP(\cX^N)$,
$\bm{\psi}\in\R^{\cX^N}$ by
\begin{equation*}
\bm{\cA}^{N}(\bm{\mu},\bm{\psi})=\frac{1}{2}\sum_{\bm{x},\bm{y}}(\bm\psi_{\bm{y}}-\bm\psi_{\bm{x}})^{2}\bm{w}^{N}_{\bm{x},\bm{y}}(\bm{\mu}),
\end{equation*}
where the weights $\bm{w}^N_{\bx,\by}(\bm\mu)$ are defined like in~\eqref{e:MF:weights} as follows
\begin{equation}\label{e:PS:weights}
  \bm{w}^N_{\bx,\by}(\bm\mu) := \Lambda\!\bra{\bm\mu_{\bx} \bm Q^N(\bx,\by) , \bm\mu_{\by} \bm Q^N(\by,\bx)} .
\end{equation}
Then, a distance $\bm{\cW}^N$ on $\cP(\cX^N)$ is given by
\begin{equation}\label{e:particle:def:W}
  \bm\cW^{N}(\bm\mu, \bm\nu)^2 := \underset{(\bm{c}(t), \bm\psi(t))}{\inf} \hspace{1mm} \int_0^1{\bm{\cA}^{N}(\bm{c}(t),  \bm\psi(t))dt}
\end{equation}
where the infimum runs over all pairs such that $\bm{c}$ is a path
from $\bm\mu$ to $\bm\nu$ in $\mathcal{P}(\cX^N)$, and such that the
continuity equation
\begin{equation}\label{e:particles:CE}
  \dot{\bm{c}}_{\bx}(t) + \sum_{\by} (\bm\psi_{\by}(t) - \bm\psi_{\bx}(t))\bm{w}^{N}_{\bx,\by}(\bm{{c}}(t))= 0
\end{equation}
holds. For details of the construction and the proof that this defines
indeed a distance we refer to \cite{Maas2011}. In particular, we note
that for any absolutely continuous curve
$\bm{c}:[0,T]\to(\cP(\cX^N),\bm\cW^N)$ there exist a function
$\bm\psi:[0,T]\to\R^{\cX^N}$ such that the continuity equation
\eqref{e:particles:CE} holds.

Finally, we define the $N$-particle Fisher information by
\begin{align*}
  \bm\cI^{N}(\bm\mu) :=\begin{cases} \frac{1}{2} \sum\limits_{(\bm{x},\bm{y})\in E_{\bm{\mu}}} \bm{w}^{N}_{\bm{xy}}(\bm{\mu})(\log(\bm{\mu}_{\bm{x}}\bm{Q}^{N}_{\bm{xy}}(\bm{\mu}))-\log(\bm{\mu}_{\bm{y}}\bm{Q}^{N}_{\bm{yx}}(\bm{\mu})))^{2} & \bm{\mu}\in \cP^{*}(\cX^{N})\\
    \infty. & \text{otherwise}\end{cases}
\end{align*}
We formulate the statement that \eqref{e:def:PartEvol} is the gradient
flow of $\bm\cF^N$ w.r.t.~$\bm\cW^N$ again in terms of curves of maximal
slope.
\begin{proposition}
  For any absolutely continuous curve $\bm{{c}} : [0,T] \rightarrow
  (\mathcal{P}(\cX^N),\bm\cW^N)$ the function $\bm{\cJ}^{N}$ given by
\begin{align}\label{e:particle:LDfunctional}
  \bm{\cJ}^{N}(\bm{{c}}) :=\bm{\cF}^{N}(\bm{{c}}(T)) -
  \bm{\cF}^{N}(\bm{{c}}(0)) +
  \frac{1}{2}\int_0^T \bm{\cI}^{N}(\bm{{c}} (t))+\bm{\cA}^{N}(\bm{{c}}(t),\bm{\psi}(t)) \; dt
\end{align}
is non-negative, where $\bm{\psi}_{t}$ is such that the continuity
equation~\eqref{e:particles:CE} holds. Moreover, a curve $\bm{{c}}$
is a solution to $\dot{\bm{{c}}}(t) =\bm{{c}}(t) \bm{Q}^{N}$ if
and only if $\bm\cJ^N(\bm c)=0$.
\end{proposition}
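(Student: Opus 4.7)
The plan is to adapt the proof of Proposition~\ref{prop_max_slope_nonlin} to this simpler linear setting. Since $\bm{Q}^N$ is a fixed (state-independent) rate matrix on $\cX^N$ that is irreducible and reversible with respect to $\bm\pi^N$, the assertion is essentially contained in the linear gradient flow theory of \cite{Maas2011,Mielke2013}; the task is only to recast it in terms of the curve-of-maximal-slope functional $\bm\cJ^N$ used throughout the paper. The proof therefore mirrors Proposition~\ref{prop_max_slope_nonlin} step by step, with every technical issue strictly easier because the weights $\bm{w}^N_{\bx,\by}(\bm\mu)=\Lambda(\bm\mu_{\bx}\bm{Q}^N_{\bx,\by},\bm\mu_{\by}\bm{Q}^N_{\by,\bx})$ depend on $\bm\mu$ only through the linear factors $\bm\mu_{\bx},\bm\mu_{\by}$, and there are no $\mu$-derivatives of $\bm{Q}^N$ or of a potential $K$ to control.

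First I would show that $\sqrt{\bm\cI^N}$ is a strong upper gradient for $\bm\cF^N$ on $(\cP(\cX^N),\bm\cW^N)$. Regularize by $\bm\cF^N_\delta(\bm\mu):=\sum_{\bx}(\bm\mu_{\bx}+\delta)\log\frac{\bm\mu_{\bx}+\delta}{\bm\pi^N_{\bx}}$ to obtain a Lipschitz functional, apply the chain rule along $\bm c$, and use Cauchy--Schwarz in the weighted inner product induced by $\bm{w}^N(\bm c(t))$. Detailed balance $\bm\pi^N_{\bx}\bm{Q}^N_{\bx,\by}=\bm\pi^N_{\by}\bm{Q}^N_{\by,\bx}$ rewrites the potential gradient as
\[
  \log\tfrac{\bm\mu_{\bx}}{\bm\pi^N_{\bx}}-\log\tfrac{\bm\mu_{\by}}{\bm\pi^N_{\by}} \;=\; \log\bigl(\bm\mu_{\bx}\bm{Q}^N_{\bx,\by}\bigr)-\log\bigl(\bm\mu_{\by}\bm{Q}^N_{\by,\bx}\bigr),
\]
whose weighted square sum is exactly $2\bm\cI^N(\bm\mu)$. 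Passing $\delta\to0$ by dominated convergence (integrability is guaranteed by finiteness of the right-hand side) yields
\[
  \bigl|\bm\cF^N(\bm c(t_2))-\bm\cF^N(\bm c(t_1))\bigr|\;\leq\;\int_{t_1}^{t_2}\sqrt{\bm\cA^N(\bm c(t),\bm\psi(t))}\,\sqrt{\bm\cI^N(\bm c(t))}\,dt.
\]
Non-negativity of $\bm\cJ^N(\bm c)$ is then immediate from Young's inequality $ab\leq \tfrac12 a^2+\tfrac12 b^2$.

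For the converse direction, assume $\bm\cJ^N(\bm c)=0$. Then equality must hold almost everywhere both in Young's inequality and in Cauchy--Schwarz, which forces
\[
  \bm\psi_{\by}(t)-\bm\psi_{\bx}(t) \;=\; \partial_{\mu_{\by}}\bm\cF^N(\bm c(t))-\partial_{\mu_{\bx}}\bm\cF^N(\bm c(t))
\]
for all $(\bx,\by)\in E_{\bm c(t)}$. A direct computation analogous to equation~\eqref{cK}—now trivial since $\bm{Q}^N$ does not depend on the state—reduces the continuity equation~\eqref{e:particles:CE} to the master equation $\dot{\bm c}(t)=\bm c(t)\bm{Q}^N$.

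The only technical obstacle, exactly as in the mean-field case, is handling a possible singular initial layer where $\bm c$ may touch $\partial\cP(\cX^N)$ (on which $\bm\cF^N$ may be infinite and $\bm\cI^N$ is not Lipschitz). Since $\int_0^T\bm\cI^N(\bm c(t))\,dt<\infty$, one can pick $\epsilon_n\downarrow 0$ with $\bm c(\epsilon_n)\in\cP^*(\cX^N)$, and by continuity of $\bm c$ find $a,\epsilon>0$ with $\bm c(t)\in\cP^a(\cX^N)$ on $[\epsilon_n,\epsilon_n+\epsilon]$. On this interval $\bm\cI^N$ is Lipschitz and the chain-rule argument applies; irreducibility of $\bm{Q}^N$ (which implies that any solution starting in $\cP^*(\cX^N)$ stays in $\cP^*(\cX^N)$) lets one bootstrap, via the standard maximal-extension argument used in Proposition~\ref{prop_max_slope_nonlin}, to the full interval $[\epsilon_n,T]$, and then $\epsilon_n\to0$ gives the conclusion on $[0,T]$. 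The converse implication—that a solution of $\dot{\bm c}=\bm c\bm{Q}^N$ gives $\bm\cJ^N(\bm c)=0$—is the familiar entropy--entropy-production identity, verified by the choice $\bm\psi(t)=-D\bm\cF^N(\bm c(t))$, applying the chain rule on $[\epsilon,T]$ for $\epsilon>0$ and sending $\epsilon\to 0$.
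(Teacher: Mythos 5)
Your proposal is correct and follows exactly the approach the paper intends: the paper's own proof consists of the single sentence that it is ``exactly the same as for Proposition~\ref{prop_max_slope_nonlin},'' and what you have written is precisely that adaptation, carried out in detail. You correctly identify that the only changes are notational (fixed rate matrix $\bm Q^N$ on $\cX^N$, weights depending on $\bm\mu$ only through the linear factors), so the $\delta$-regularization of the entropy, the Cauchy--Schwarz and Young inequalities, the detailed-balance rewriting, and the $\epsilon_n$-bootstrap off the boundary all go through verbatim.
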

\begin{proof}
  The proof is exactly the same as for Proposition
  \ref{prop_max_slope_nonlin}, so we omit it.
\end{proof}
\subsection{Convergence of gradient flows}
In this section we prove convergence of the empirical distribution of
the $N$-particle system \eqref{e:def:PartEvol} to a solution of the
non-linear equation \eqref{eq_evol_nonlin}. This will be done by using
the gradient flow structure exibited in the previous sections together
with the techniques developed in \cite{Serfaty2011} on convergence of
gradient flows.

Heuristiclly, consider a sequence of gradient flows associated to a
senquence of metric spaces and engergy functionals. Then to prove
convergence of the flows it is sufficient to establish convergence of
the metrics and the energy functionals in the sense that functionals
of the type \eqref{e:particle:LDfunctional} satisfy a suitable notion
of $\Gamma-\liminf$ estimate.

In the following Theorem \ref{thm_serfaty} we adapt the result in
\cite{Serfaty2011} to our setting.

We consider the sequence of metric spaces $\bS^{N} :=
(\cP(\cX^N),\bm\cW^{N})$ with $\bm\cW^{N}$ defined in~\eqref{e:particle:def:W}
and the limiting metric space $\SS:=\bra{\cP(\cP(\cX)), \WW}$ with
$\WW$ defined in~\eqref{e:Lio:W}.
The following notion of convergence will provide the correct topology
in our setting.
\begin{definition}[Convergence of random measures]\label{def:tauConv}
  A sequence $\bm{\mu}^{N} \in \mathcal{P}(\mathcal{X}^{N})$ converges
  in $\tau$ topology to a point $\MM
  \in\mathcal{P}(\mathcal{P}(\mathcal{X}))$ if and only if
  $L^N_\#(\bm{\mu}^{N})\in \mathcal{P}(\mathcal{P}(\mathcal{X}))$
  converges in distribution to $\MM$, where $L^N : \cX^N \to \cP(\cX)$
  is defined in~\eqref{e:def:EmipiricalDistribution}.
  Likewise, for $\bra{\bm{c}^N(t)}_{t\in [0,T]}$ with $\bm{c}_t^N \in \cP(\cX^N)$: $\bm{c}^N \stackrel{\tau}{\to} \CC$
  if for all $t\in [0,T]$, $L^N_\# \bm{c}^N(t) \rightharpoonup \CC(t)$.
\end{definition}
\begin{theorem}[{Convergence of gradient flows \`a la \cite{Serfaty2011}}]\label{thm_serfaty}
  Assume there exists a topology~$\tau$ such that whenever a sequence
  $\bm{c}^N \in \AC([0,T],\bS^{N})$ converges
  pointwise w.r.t.~$\tau$ to a
  limit $\mathbb{{C}} \in \AC([0,T],\SS)$, then this convergence is
  compatible with the energy functionals, that is
\begin{equation}\label{e:Serfaty:energy}
   \bm{c}^N \stackrel{\tau}{\to} \mathbb{{C}} \qquad\Rightarrow\qquad \liminf_{N\to\infty} \frac{1}{N} \bm\cF^{N}(\bm{c}^N(T)) \geq \FF(\mathbb{{C}}(T)) - \cF_0 ,
\end{equation}
  for some finite constant $\cF_0\in \R$. In addition, assume the following holds
\begin{enumerate}
\item $\liminf$-estimate of metric derivatives:
\begin{equation}\label{e:Serfaty:derivs}
 \liminf_{N\to\infty} \frac{1}{N}\int_0^T  \bm\cA^{N}(\bm{c}^N(t), \bm\psi^N(t)) \; dt  \geq  \int_0^T \AA(\mathbb{{C}}(t),\mathbb{\Psi}(t)) \; dt ,
\end{equation}
where $(\bm{c}^N,\bm\psi^N)$ and $(\mathbb{{C}}(t),\mathbb{\Psi}(t))$ are related via the respective continuity equations in $\bS^{N}$ and $\SS$.
\item $\liminf$-estimate of the slopes pointwise in $t\in [0,T]$:
\begin{equation}\label{e:Serfaty:slopes}
\liminf_{N\to \infty} \frac{1}{N} \bm\cI^{N}(\bm{c}^N(t)) \geq \II(\mathbb{{C}}(t)) .
\end{equation}
\end{enumerate}
Let $\bm{c}^N$ be a curve of maximal slope on $(0,T)$ for $\bm\cJ^{N}$~\eqref{e:particle:LDfunctional} such that $\bm{c}^N(0) \stackrel{\tau}{\to} \mathbb{C}(0)$ which is well-prepared in the sense that  $\lim_{N\to \infty}\bm\cF^{N}(\bm{c}^N(0))=\FF(\mathbb{C}(0))$.
Then $\mathbb{{C}}$ is a curve of maximal slope for $\JJ$~\eqref{e:Lio:LDfunctional} and
\begin{align*}
  \forall t\in[0,T), \quad \lim_{N\to \infty} \frac{1}{N} \bm\cF^{N}(\bm{c}^N(t)) &= \FF(\mathbb{{C}}(t)) \\
  \frac{1}{N} \bm\cA^{N}(\bm{c}^N, \bm\psi^N) &\to   \AA(\mathbb{{C}},\mathbb{\Psi})\quad\text{in}\quad L^{2}[0,T] \\
  \frac{1}{N} \bm\cI^{N}(\bm{c}^N) &\to \II(\mathbb{C}) \quad \text{in} \quad L^{2}[0,T]
\end{align*}
\end{theorem}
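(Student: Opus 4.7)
The argument follows the abstract variational scheme of Sandier--Serfaty, adapted to the two-scale setting that links the finite-$N$ particle system to the limiting lifted flow. Since each $\bm{c}^N$ is by assumption a curve of maximal slope for $\bm\cJ^N$, one has the energy identity
\begin{equation*}
  \tfrac{1}{N}\bm\cF^N(\bm{c}^N(T)) - \tfrac{1}{N}\bm\cF^N(\bm{c}^N(0)) + \tfrac{1}{2N}\int_0^T \bm\cI^N(\bm{c}^N(t))\,dt + \tfrac{1}{2N}\int_0^T \bm\cA^N(\bm{c}^N(t),\bm\psi^N(t))\,dt = 0.
\end{equation*}
The plan is to take $\liminf$ as $N\to\infty$ in each of the four terms and exploit the rigidity forced by the lower bound $\JJ(\CC)\geq 0$ from Proposition \ref{prop:Lio:GF}.

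First I would pass to the limit termwise: well-preparedness gives convergence of the initial energy; the energy lower bound \eqref{e:Serfaty:energy} handles the final energy; the action lower bound \eqref{e:Serfaty:derivs} handles the time-integrated action; and Fatou's lemma combined with the pointwise slope lower bound \eqref{e:Serfaty:slopes} handles the time-integrated Fisher information. Using subadditivity of $\liminf$, the identity above yields $\JJ(\CC) \leq 0$, and combined with $\JJ(\CC)\geq 0$ this forces $\JJ(\CC)=0$, so $\CC$ is a curve of maximal slope for $\FF$ and in particular solves~\eqref{e:Liouville:weak}.

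To obtain the quantitative convergences, I would observe that the chain of $\liminf$ inequalities just used must be tight: since their sum equals the right-hand side, each individual $\liminf$ must be an actual limit equal to its lower bound. This yields $\tfrac{1}{N}\bm\cF^N(\bm{c}^N(T)) \to \FF(\CC(T)) - \cF_0$ together with convergence of the time integrals $\int_0^T \bm\cI^N/N \to \int_0^T \II$ and $\int_0^T \bm\cA^N/N \to \int_0^T \AA$. Pointwise-in-$t$ energy convergence follows by running exactly the same argument on the subintervals $[0,t]$ and $[t,T]$: each of $\bm\cJ^N_{[0,t]}$ and $\bm\cJ^N_{[t,T]}$ is individually non-negative yet their sum vanishes, forcing each to vanish in the limit. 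Finally, the $L^2$-convergence of the square roots of the action and Fisher information follows from the combination of convergence of $L^2$-norms (the time integrals just obtained), the pointwise $\liminf$ inequalities, and the Hilbert-space fact that weak convergence together with norm convergence implies strong convergence.

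The main technical obstacle, in my view, is the bookkeeping of the constant $\cF_0$: it encodes the shift between the particle-level relative entropy with respect to $\bm\pi^N$ and the lifted free energy $\FF$ built from $\cF$. One must verify that the same shift appears at $t=0$ (implicitly in the well-preparedness hypothesis) as at $t=T$ (in the $\liminf$ bound \eqref{e:Serfaty:energy}), so that it cancels consistently when comparing $\bm\cJ^N/N$ with $\JJ$. Once this is settled, the proof is entirely abstract and does not depend on the specific mean-field structure beyond the three $\liminf$ estimates \eqref{e:Serfaty:energy}--\eqref{e:Serfaty:slopes}, whose verification is the real burden carried by the surrounding sections.
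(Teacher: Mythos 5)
Your proposal is correct and follows essentially the same route as the paper, which gives only a one-paragraph sketch deferring the details to the Sandier--Serfaty framework: pass to the limit termwise in the energy identity for $\tfrac{1}{N}\bm\cJ^N$, use superadditivity of $\liminf$ together with $\JJ(\CC)\geq 0$ to force $\JJ(\CC)=0$ and hence tightness of each chained inequality, then localize to subintervals for the pointwise-in-$t$ statements. Your remark on the $\cF_0$ bookkeeping in fact flags a small inconsistency in the theorem as stated: the well-preparedness hypothesis reads $\lim_N \bm\cF^N(\bm{c}^N(0))=\FF(\CC(0))$, but for consistency with \eqref{e:Serfaty:energy} (and with the way well-preparedness is phrased in Theorem~\ref{thm:PStoMF}) it should read $\lim_N \tfrac{1}{N}\bm\cF^N(\bm{c}^N(0))=\FF(\CC(0))-\cF_0$, so that the shift cancels between the endpoints exactly as you describe.
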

\begin{proof}
 Let us sketch the proof. The assumptions~\eqref{e:Serfaty:energy},~\eqref{e:Serfaty:derivs},~\eqref{e:Serfaty:slopes} and the well-preparedness of the initial data allow to pass in the limit in the individual terms of $\frac{1}{N}\bm\cJ^{N}$~\eqref{e:particle:LDfunctional} to obtain
 \begin{equation*}
   \liminf_{N\to \infty} \frac{1}{N} \bm\cJ^{N}(\bm{c}^N) \geq \JJ(\mathbb{{C}}) .
 \end{equation*}
 Hence, if each $\bm{c}^N$ is a curve of maximal slope w.r.t.~$\frac{1}{N} \bm\cJ^{N}(\bm{c}^N)$ then so is $\mathbb{{C}}$ w.r.t.~$\JJ$.
The other statements also can be directly adapted from \cite{Serfaty2011}.
\end{proof}
\subsection{Application}
To apply Theorem~\ref{thm_serfaty}, we first have to show the convergence of the energy~\eqref{e:Serfaty:energy}.
\begin{proposition}[$\liminf$ inequality for the relative entropy]\label{prop:liminf:Ent}
Let a sequence $\bm{\mu}^N \in \cP(\bm{\cX}^N)$ be given such that $\bm{\mu}^{N}\stackrel{\tau}{\to} \MM$ as $N\to \infty$, then
\begin{equation*}
 \liminf_{N\rightarrow \infty}\frac{1}{N}\mathcal{H}(\bm{\mu}^{N} \mid \bm{\pi}^{N})\geq
\int_{\mathcal{P}(\mathcal{X})}(\mathcal{F}(\nu) - \cF_0) \; \MM(d\nu) = \FF(\MM) - \cF_0,
\end{equation*}
where
\begin{equation*}
 \cF_0 := \inf_{\mu\in \cP(\cX)} \cF(\mu) .
\end{equation*}
\end{proposition}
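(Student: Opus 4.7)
The plan is to decompose the scaled relative entropy into three pieces and analyze each. Using $\bm\pi^N_{\bm x}=(\bm Z^N)^{-1}\exp(-NU(L^N\bm x))$, one has the identity
\begin{equation*}
\frac{1}{N}\cH(\bm\mu^N\mid\bm\pi^N)=\frac{1}{N}\sum_{\bm x}\bm\mu^N_{\bm x}\log\bm\mu^N_{\bm x}+\frac{1}{N}\log\bm Z^N+\int_{\cP(\cX)}U(\nu)\,d(L^N_{\#}\bm\mu^N)(\nu).
\end{equation*}
Since $\tau$-convergence means $L^N_{\#}\bm\mu^N\rightharpoonup\MM$ and $U$ is continuous on the compact set $\cP(\cX)$, the third term converges to $\int U\,d\MM$. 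The partition sum is handled by decomposing over level sets of $L^N$ and writing $\bm Z^N=\sum_{\nu\in\cP_N(\cX)}\binom{N}{N\nu_1,\dots,N\nu_d}e^{-NU(\nu)}$; the uniform Stirling estimate from Appendix~\ref{S:stirling} gives $\log\binom{N}{N\nu_1,\dots,N\nu_d}=-N\Ent(\nu)+O(\log N)$ on $\cP_N(\cX)$, and together with a standard Laplace (two-sided) argument and the density of $\cP_N(\cX)$ in $\cP(\cX)$ one obtains $\frac{1}{N}\log\bm Z^N\to-\inf_\nu\cF(\nu)=-\cF_0$.

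The main step is the $\liminf$ lower bound for the entropy term. Set $\tilde{\bm\mu}^N:=L^N_{\#}\bm\mu^N\in\cP(\cP_N(\cX))$ and group $\bm\mu^N$ according to the level sets of $L^N$. The conditional-entropy decomposition gives
\begin{equation*}
\sum_{\bm x}\bm\mu^N_{\bm x}\log\bm\mu^N_{\bm x}=\sum_\nu\tilde{\bm\mu}^N_\nu\log\tilde{\bm\mu}^N_\nu-\sum_\nu\tilde{\bm\mu}^N_\nu\,H\bigl(\bm\mu^N(\cdot\mid L^N=\nu)\bigr),
\end{equation*}
and the maximum-entropy bound $H(\,\cdot\,)\le\log\bigl|\{\bm x:L^N\bm x=\nu\}\bigr|=\log\binom{N}{N\nu_1,\dots,N\nu_d}$, followed by the same Stirling estimate, yields
\begin{equation*}
\sum_{\bm x}\bm\mu^N_{\bm x}\log\bm\mu^N_{\bm x}\ge\sum_\nu\tilde{\bm\mu}^N_\nu\log\tilde{\bm\mu}^N_\nu+N\!\int\Ent(\nu)\,d\tilde{\bm\mu}^N(\nu)+O(\log N).
\end{equation*}
Since $|\cP_N(\cX)|\le(N+1)^d$, the discrete entropy term $\sum_\nu\tilde{\bm\mu}^N_\nu\log\tilde{\bm\mu}^N_\nu\ge-d\log(N+1)$ is $O(\log N)$ and disappears after dividing by $N$. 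Using that $\nu\mapsto\Ent(\nu)$ is continuous on $\cP(\cX)$ (with convention $0\log 0=0$) and that $L^N_{\#}\bm\mu^N\rightharpoonup\MM$, we conclude $\liminf_N\frac{1}{N}\sum_{\bm x}\bm\mu^N_{\bm x}\log\bm\mu^N_{\bm x}\ge\int\Ent\,d\MM$.

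Combining the three limits gives $\liminf_N\frac{1}{N}\cH(\bm\mu^N\mid\bm\pi^N)\ge\int\Ent\,d\MM+\int U\,d\MM-\cF_0=\FF(\MM)-\cF_0$, as required. The delicate point is the uniform Stirling control, because empirical measures in $\cP_N(\cX)$ may have vanishing components where the naive formula $\log k!\approx k\log k - k$ breaks down; the accounting nevertheless closes because only the finitely many nonzero coordinates contribute Stirling errors, giving a uniform remainder of order $O(\log N)$. Precisely this uniform estimate is provided by Appendix~\ref{S:stirling}, so the argument reduces to invoking it cleanly in the two places above.
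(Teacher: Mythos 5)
Your proof is correct and follows essentially the same route as the paper. Both arguments split the scaled relative entropy along level sets of $L^N$, invoke the Stirling estimate from Appendix~\ref{S:stirling} for $\log|\{\bm x : L^N\bm x = \nu\}|$, bound the discrete entropy of $L^N_\#\bm\mu^N$ by $\log|\cP_N(\cX)| = O(\log N)$, and use a Laplace/Sanov argument for $\frac1N\log\bm Z^N \to -\cF_0$; the only cosmetic difference is that you expand $\cH(\bm\mu^N\mid\bm\pi^N)$ directly into entropy, partition sum, and energy pieces and then apply the max-entropy bound on each fiber, whereas the paper first applies the chain rule for relative entropy to drop $\cH(\bm\mu^N(\cdot\mid L^N)\mid\bm\pi^N(\cdot\mid L^N))$ and then further subtracts $\cH(L^N_\#\bm\mu^N\mid\bm M^N)$ against the uniform measure $\bm M^N$ on $\cP_N(\cX)$ — which, since $\bm\pi^N$ is constant on each fiber, yields exactly the same terms as your two inequalities.
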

\begin{proof}
  First we note that the relative entropy can be decomposed as
  follows:
  \begin{align*}
    \cH(\bm\mu^N\mid\bm\pi^N) = &\int \cH\big(\bm\mu^N(\,\cdot\mid L^N=\nu)\;\big|\; \bm\pi^N(\,\cdot\mid L^N=\nu)\big) \, d L^N_\# \bm\mu^{N}(\nu)\\
                               &+ \cH(L^N_\# \bm\mu^{N}\mid L^N_\# \bm\pi^{N})
  \end{align*}
  Let us denote by $\bm M^N$ the uniform probability measure on
  $\cP_N(\cX)$. Using the fact that relative entropy w.r.t.~a
  probability measure is non-negative we arrive a the estimate
 \begin{align}\label{eq:ent-est1}\nonumber
    \cH(\bm\mu^N\mid\bm\pi^N) &\geq \cH(L^N_\# \bm\mu^{N}\mid L^N_\# \bm\pi^{N})\\\nonumber
                              &= \cH(L^N_\# \bm\mu^{N}\mid \bm M^N) + \Expect_{L^N_\# \bm\mu^{N}}\!\left[\log\frac{d\bm M^N}{dL^N_\# \bm\pi^{N}}\right]\\
&\geq \Expect_{L^N_\# \bm\mu^{N}}\!\left[\log\frac{d\bm M^N}{dL^N_\# \bm\pi^{N}}\right]\;.
\end{align}
For $\nu \in \cP_N(\cX)$ we set $\cT_N(\nu) = \set{\bx \in \cX^N :
  L^N(\bx) = \nu }$. Then by the definition
of~$\bm\pi^N$~\eqref{e:def:piN} and
$U(\nu)$~\eqref{e:def:MF:FreeEnergy} we have
\begin{align*}
  L^N_\# \bm\pi^{N}(\nu) = \frac{|\cT_N(\nu)|}{\bm Z^N}\exp\big(-NU(\nu)\big)\;.
\end{align*}
From \eqref{eq:ent-est1} we thus conclude
\begin{align}\nonumber
  \frac1N \cH(\bm\mu^N\mid\bm\pi^N) \geq &-\frac1N \log |\cP_N(\cX)| +\frac1N \log \bm Z^N\\\label{eq:ent-est2}
  &-\frac1N \Expect_{L^N_\# \bm\mu^{N}}\!\big[\log|\cT_N|\big] + \Expect_{L^N_\# \bm\mu^{N}}[U]\;.
\end{align}
The cardinality of $\cP_N(\cX)$ is given by $\binom{N + d - 1}{N} \leq N^{d-1}/d!$ and hence
\begin{align}\label{eq:ent-est3}
  \log | \cP_N(\cX)| \leq (d-1)\log N\;.
\end{align}
Moreover, by Stirling's formula (cf.~Lemma~\ref{lem:stirling}), it follows that for any $\nu\in \cP_N(\cX)$
\begin{align}\nonumber
  -\frac{1}{N} \log \abs{\cT_N(\nu)} &= - \frac{1}{N} \log
  \frac{N!}{\prod_{x\in \cX} (N\nu(x))!}\\\label{eq:ent-est4} &= \sum_{x\in \cX} \nu(x)
  \log \nu(x) + O\left(\frac{\log N}{N}\right)\;.
\end{align}
Furthermore, we have
\begin{equation*}
  \bm Z^N = \sum_{\nu\in \cP_N(\cX)} e^{-N U(\nu)} |\cT_N(\nu)| .
\end{equation*}
Hence, by using Sanov's and Varadhan's theorem \cite{Dupuis} on the
asymptotic evaluation of exponential integrals, it easily follows that
\begin{align}\label{eq:ent-est5}
  \lim_{N\rightarrow \infty} \frac{1}{N}\log \bm Z^N=-\inf_{\nu\in
    \cP(\cX)} \cF(\nu) =: - \cF_0 .
\end{align}
Combing now \eqref{eq:ent-est2} with
\eqref{eq:ent-est3}-\eqref{eq:ent-est5} finishes the proof.
\end{proof}
The other ingredient of the proof of Theorem~\ref{thm_serfaty} consists in proving the convergence of the metric derivatives~\eqref{e:Serfaty:derivs} and slopes~\eqref{e:Serfaty:slopes}.
\begin{proposition}[Convergence of metric derivative and slopes]\label{prop:liminf:MDslopes}
 Let $\bm{{c}}^N$ be an element of $ \AC([0,T],\cP(\cX^N)),$ and choose $\bm{\psi}^{N}:[0,T]\rightarrow \cP(\cX^N)$ such that $(\bm{{c}}^{N},\bm{\psi}^N)$ solves the continuity equation. Furthermore, assume that
\begin{equation*}
  \bm{c}^N \stackrel{\tau}{\to} \mathbb{{C}},
\end{equation*}
with some measurable $\mathbb{{C}}: [0,T]\rightarrow \cP(\cP(\cX)),$
and that \[\liminf_{N\to \infty} \int_0^T \frac{1}{N} \bm{\cA}^{N}(\bm{{c}}^{N}(t),\bm{\psi}^N(t)) dt <\infty.\]
 Then  $\mathbb{C} \in \AC\bra{[0,T],\cP(\cP(\cX))},$ and there exists $\mathbb{\Psi}:[0,T]\rightarrow \cP(\cP(\cX)),$ for which $(\mathbb{C},\mathbb{\Psi})$ satisfy the continuity equation and for which we have
\begin{equation*}
 \liminf_{N\to \infty} \int_0^T \frac{1}{N} \bm{\cA}^{N}(\bm{{c}}^{N}(t),\bm{\psi}^N(t)) dt \geq \int_0^T \AA(\mathbb{{C}}(t),\PPsi(t)) \; dt
\end{equation*}
 and
\begin{equation*}
 \liminf_{N\to \infty} \int_0^T \frac{1}{N} \bm{\cI}^{N}\left(\bm{{c}}^{N}(t)\right) dt \geq \int_0^T  \II\left(\mathbb{{C}}(t)\right) dt .
\end{equation*}
\end{proposition}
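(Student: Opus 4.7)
The plan is to push forward the $N$-particle data via the empirical measure map $L^N$ to random measures $\CC^N(t):=L^N_\# \bm c^N(t)\in\cP(\cP(\cX))$, derive a lifted continuity equation $(\CC^N,\VV^N)\in\vec\CCEE_T$ together with an action comparison of the form $\vec\AA(\CC^N,\VV^N)\leq \frac{1}{N}\bm\cA^N(\bm c^N,\bm\psi^N)+o_N(1)$, and then invoke weak-$*$ lower semicontinuity and Proposition~\ref{prop:Lio:GradientFields} to extract a gradient potential $\PPsi$ in the limit. By Definition~\ref{def:tauConv} the $\tau$-convergence hypothesis is exactly the pointwise-in-$t$ weak-$*$ convergence $\CC^N(t)\rightharpoonup\CC(t)$, which is the natural topology in which to perform the lower-semicontinuity argument.

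For the lifted continuity equation, I note that particle jumps $\bx\to\bx^{i;y}$ shift the empirical measure by $\frac{1}{N}(e_y-e_{x_i})$; testing against $\varphi(L^N\bx)$ for $\varphi\in C_c^1((0,T)\times\cP(\cX))$ and Taylor-expanding $\varphi(\nu+\frac{1}{N}(e_y-e_{x_i}))-\varphi(\nu)=\frac{1}{N}(\partial_{\mu_y}-\partial_{\mu_{x_i}})\varphi(\nu)+O(N^{-2})$ turns the particle-level weak continuity equation into the lifted weak continuity equation of Definition~\ref{def:Lio:CE} for $\CC^N$ with the fibre-averaged current
\begin{equation*}
\VV^N_{x,y}(t,\nu):=\frac{1}{\CC^N(t,\nu)}\sum_{\bx\in\cT_N(\nu)}\sum_{i:\,x_i=x}(\bm\psi^N_{\bx^{i;y}}(t)-\bm\psi^N_{\bx}(t))\,\bm w^N_{\bx,\bx^{i;y}}(\bm c^N(t)),
\end{equation*}
where $\cT_N(\nu):=\{\bx:L^N\bx=\nu\}$. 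The action comparison combines three ingredients: convexity of $\alpha$ via Jensen's inequality on the fibres $\cT_N(\nu)$; the $1$-homogeneity of $\Lambda$, which factors $\bm w^N_{\bx,\bx^{i;y}}=\bm c^N_\bx\Lambda(Q^N_{x_i,y},(\bm c^N_{\bx^{i;y}}/\bm c^N_\bx)Q^N_{y,x_i})$; and the uniform convergence $Q^N\to Q$ of Lemma~\ref{lem:UniConvGammaN} together with the Stirling asymptotics of $|\cT_N(\nu^{i;y})|/|\cT_N(\nu)|$ from Appendix~\ref{S:stirling}, which convert the fibre density ratios into the macroscopic ratios $\nu_y/\nu_x$ so that the limit of $\bm c^N_\bx\Lambda(Q^N_{x_i,y},\cdot)$ matches $w_{x,y}(\nu)$.

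Passing to the liminf is then a standard weak-compactness argument: the action bound yields tightness for the joint measures $\CC^N(t,d\nu)\,dt$ with currents $\VV^N$, and convexity-plus-continuity of $(\mu,v)\mapsto\vec\cA(\mu,v)$ delivers the semicontinuity, producing a limit $\VV$ with $(\CC,\VV)\in\vec\CCEE_T$ satisfying the first liminf inequality; Proposition~\ref{prop:Lio:GradientFields} then replaces $\VV$ by a gradient $\PPsi$ at no additional action cost, and Proposition~\ref{accurves} upgrades $\CC$ to an element of $\AC([0,T],(\cP(\cP(\cX)),\WW))$. For the Fisher-information inequality I repeat the argument with the specific choice $\bm\psi^N=-D\bm\cF^N(\bm c^N)$, exploiting the identities $\cI(\mu)=\cA(\mu,-D\cF(\mu))$ and $\bm\cI^N(\bm\mu)=\bm\cA^N(\bm\mu,-D\bm\cF^N(\bm\mu))$ together with the detailed-balance cancellation from the proof of \eqref{e:strongupper} and the convergence $\log(\bm\pi^N_\bx/\bm\pi^N_{\bx^{i;y}})\to H_y(\nu)-H_x(\nu)$ extracted from Lemma~\ref{lem:UniConvGammaN}. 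The main obstacle I anticipate is precisely the fibrewise replacement of $\bm c^N_{\bx^{i;y}}/\bm c^N_\bx$ by $\nu_y/\nu_x$ inside $\Lambda$: this ratio is not uniformly well-behaved when $\bm c^N$ places mass on boundary configurations, and the Stirling and variance estimates of Appendices~\ref{S:stirling}--\ref{S:variance} are introduced specifically to absorb this boundary defect in both the action and the Fisher-information comparison.
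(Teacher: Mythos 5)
Your overall architecture — pushing forward to $\cP(\cP(\cX))$, deriving a lifted continuity equation, using lower semicontinuity and Proposition~\ref{prop:Lio:GradientFields} to extract a gradient potential — matches the paper. However, the crucial action-comparison step is wrong, and the obstacle you flag at the end is a symptom of having chosen the wrong inequality, not a real feature of the problem.

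You propose to factor $\bm w^N_{\bx,\bx^{i;y}}=\bm c^N_\bx\,\Lambda\bigl(Q^N_{x,y},(\bm c^N_{\bx^{i;y}}/\bm c^N_\bx)\,Q^N_{y,x}\bigr)$ and then replace the fibre density ratio $\bm c^N_{\bx^{i;y}}/\bm c^N_\bx$ by the macroscopic ratio, invoking the Stirling and variance appendices to absorb the error. This cannot work: the ratio is ill-defined whenever $\bm c^N_\bx=0$, there is no reason for it to be pointwise close to $\nu_y/\nu_x$ for a general curve $\bm c^N$, and in fact Appendices~\ref{S:stirling} and~\ref{S:variance} are used elsewhere in the paper (the entropy liminf, Proposition~\ref{prop:liminf:Ent}, and the Corollary to Theorem~\ref{thm:PStoMF} respectively) — not here. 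The paper never forms this ratio at all. Instead it applies Jensen's inequality \emph{on the fibre} $\cT_N(\nu)$: since $\Lambda$ is jointly concave and $1$-homogeneous, $\frac{1}{N}\sum_{i,\bx}\bm w^N_{\bx\bx^{i;y}}(\bm c^N)\leq \Lambda\bigl(\mathbb{\Gamma}^{N;x,y;1}(\nu,\nu^{N;x,y}),\mathbb{\Gamma}^{N;x,y;2}(\nu,\nu^{N;x,y})\bigr)$, where the two arguments are the fibre-averaged quantities $\nu_x Q^N_{xy}(\nu)L^N_\#\bm c^N(\nu)$ and $\nu^{N;x,y}_y Q^N_{yx}(\nu^{N;x,y})L^N_\#\bm c^N(\nu^{N;x,y})$; these are identified exactly, with no error term, via $\frac{1}{N}\sum_{i,\bx:\,L^N\bx=\nu,\,x_i=x}\bm c^N_\bx = \nu_x L^N_\#\bm c^N(\nu)$. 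This one-sided bound, which requires no boundary control, is what establishes $\vv^{N;x,y}\ll\Lambda(\mathbb{\Gamma}^{N;x,y;1},\mathbb{\Gamma}^{N;x,y;2})$. The action $\liminf$ then comes from a second, separate application of Jensen to the jointly convex $1$-homogeneous function $(v,a,b)\mapsto v^2/\Lambda(a,b)$, again fibrewise, followed by the Buttazzo lower-semicontinuity theorem. No Stirling, no variance, no density ratios.

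Your Fisher-information strategy is also not the paper's: choosing $\bm\psi^N=-D\bm\cF^N(\bm c^N)$ would mean changing the given data and would then require you to prove convergence of $\bm\cA^N(\bm c^N,-D\bm\cF^N(\bm c^N))$ separately, which is not obviously easier. The paper instead rewrites $\bm\cI^N$ directly in terms of the jointly convex $1$-homogeneous function $\lambda(a,b)=(a-b)(\log a-\log b)$, applies the same fibrewise Jensen (this time giving a lower bound, the convex direction) to arrive at $\lambda$ of $\mathbb{\Gamma}^{N;x,y;1},\mathbb{\Gamma}^{N;x,y;2}$, and closes with Buttazzo again. The unifying mechanism you should internalize is: everything in this proposition is driven by one-sided Jensen inequalities for jointly concave/convex $1$-homogeneous integrands applied over the fibres of $L^N$, paired with weak lower semicontinuity; the delicate Stirling and variance asymptotics play no role here.
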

\begin{proof}
  Let us summarize consequences of the assumption $\bm{c}^N
  \stackrel{\tau}{\to} \mathbb{{C}}$. By Definition~\ref{def:tauConv}, this means
  $L^N_\# \bm{c}^N(t) \rightharpoonup \CC(t)$ for all $t\in [0,T]$. For $x,y\in \cX$ we define two
  auxiliary measures
  $\mathbb{\Gamma}^{N;x,y;1}(t),\mathbb{\Gamma}^{N;x,y;2}(t)\in\cP\big(\cP(\cX)\times\cP(\cX)\big)$ by setting
\begin{align*}
  \mathbb{\Gamma}^{N;x,y;1}(t,\nu,\mu) :=  \delta_{\nu^{N;x,y}}(\mu) \nu_x Q_{xy}^N(\nu) L_\#^N \bm c^N(t,\nu) \\
   \mathbb{\Gamma}^{N;x,y;2}(t,\nu,\mu) :=  \delta_{\mu^{N;y,x}}(\nu) \mu_y Q_{yx}^N(\mu) L_\#^N \bm c^N(t,\mu),
\end{align*}
where  $\nu^{N;x,y} := \nu - \frac{\delta_x - \delta_y}{N}$. Then, we have $\mathbb{\Gamma}^{N;x,y;1}(t,\nu,\mu)=\mathbb{\Gamma}^{N;y,x;2}(t,\mu,\nu)$. Due to~\eqref{ass:UniConvGammaN} from Lemma~\ref{lem:UniConvGammaN} it holds
\begin{align}
 \mathbb{\Gamma}^{N;x,y;1}(t,\nu,\mu) \rightharpoonup \delta_{\nu}(\mu) \nu_x Q_{xy}(\nu) \mathbb{C}(t,\nu) \label{lscA:CC1:weakto}\\
  \mathbb{\Gamma}^{N;x,y;2}(t,\nu,\mu) \rightharpoonup \delta_{\mu}(\nu) \mu_x Q_{xy}(\mu) \mathbb{C}(t,\mu) \label{lscA:CC2:weakto}.
\end{align}
In the sequel, we will decompose the sum over all possible jumps of the particle system in different ways
\begin{equation*}
  \sum_{\bx,\by} f(\bx,\by) = \sum_{\nu,\mu\in \cP_N(\cX)} \sum_{\substack{\bx: L^N \bx=\nu \\ \by : L^N \by=\mu}}  f(\bx,\by) =  \sum_{x,y\in \cX} \sum_{\nu\in \cP_N(\cX)}  \sum_{i=1}^N \sum_{\substack{\bx: L^N \bx = \nu \\ x_i = x}} f(\bx, \bx^{i;y}) .
\end{equation*}
where $\bx^{i;y} = \bx - (x_{i}-y)\be^i$ and $f: \cX^N \times \cX^N \to
\R$ with $f(\bx,\by)= 0$ unless $\by= \bx^{i;y}$ for some $i\in
\set{1,\dots, N}$ and $y\in \cX$.  We define the following
vector field on $\cP(\cX)\times \cP(\cX)$ 
\begin{align*}
  \vv^{N;x,y}(t,\nu,\mu) &:= \frac{1}{2N} \delta_{\nu^{N;x,y}}(\mu) \sum_{\substack{\bx: L^N \bx=\nu \\ \by : L^N \by=\mu}}\bra{\bm\psi_{\by}^N(t)-\bm\psi_{\bx}^N(t)} \bm w_{\bx\by}^N\bra{\bm c^{N}(t)} \\
  &= \frac{1}{2N} \delta_{\nu^{N;x,y}}(\mu) \sum_{i=1}^N \sum_{\substack{\bx: L^N\bx =\nu \\ x_i = x}} \bra{\bm\psi_{\bx^{i;y}}^N(t)-\bm\psi_{\bx}^N(t)} \bm w_{\bx\bx^{i;y}}^N\bra{\bm c^{N}(t)},
\end{align*}
where $\bm w_{\bx\by}^N\bra{\bm c^N(t)}$ is defined in~\eqref{e:PS:weights}.
From the definition of $\vv^{N;x,y}(t)$ and the Cauchy-Schwarz inequality, it follows that for $\nu,\mu\in \cP(\cX)$ with $\mu = \nu^{N;x,y}$ for some $x,y\in \cX$
\begin{align*}
  \abs{\vv^{N;x,y}(t,\nu,\mu)} &\leq \Biggl(\frac{1}{2N} \sum_{\substack{\bx: L^N \bx=\nu \\ \by : L^N \by=\mu}}\bra{\bm\psi_{\by}^N(t)-\bm\psi_{\bx}^N(t)}^2 \bm{w}_{\bx\by}^N\bra{\bm c^{N}(t)} \Biggr)^\frac12 \times \\
  &\qquad  \Biggl(\frac{1}{2N} \sum_{i=1}^N \sum_{\substack{\bx: L^N\bx =\nu \\ x_i = x}}\bm w^{N}_{\bx \bx^{i;y}}\bra{\bm c^N(t)}\Biggr)^\frac12.
\end{align*}
By using the identity
\[
 \frac{1}{N} \sum_{i=1}^N \sum_{\substack{\bx : L^N(\bx)=\nu \\ x_i = x}} \bm c_{\bx}^N(t) = \sum_{\bx} \delta_\nu(L^N(\bx)) \; L_{x}^N(\bx) \; \bm{{c}}_{\bx}^N(t) = L^N_{\#} \bm{{c}}^{N}(t,\nu) \ \nu_{x} ,
\]
and the fact that the logarithmic mean is jointly concave and $1$-homogeneous, we can conclude
\begin{align*}
  \frac{1}{N} \sum_{i=1}^N \sum_{\substack{\bx: L^N\bx =\nu \\ x_i = x}}\bm w_{\bx\bx^{i;y}}^N\bra{ \bm c^N(t)} \leq  \Lambda\!\bra{ \mathbb{\Gamma}^{N;x,y;1}(t,\nu,\nu^{N;x,y}),\mathbb{\Gamma}^{N;x,y;2}(t,\nu,\nu^{N;x,y})} ,
\end{align*}
which first shows that $\vv^{N;x,y}(t) \ll \Lambda\!\bra{ \mathbb{\Gamma}^{N;x,y;1}(t),\mathbb{\Gamma}^{N;x,y;2}(t) }$ as product measure on $\cP(\cX)\times \cP(\cX)$. Moreover, by summation and integration over any Borel $I\subset [0,T]$ we get
\begin{align}\label{lscA:vv:bound}
  \int_I \sum_{\nu,\mu\in \cP_N(\cX)} &\abs{\vv^{N;x,y}(t,\nu,\mu)} \;dt \leq \bra{ \sqrt{T} \int_0^T \frac{1}{N} \bm\cA(\bm c^N(t),\bm\psi(t)) \; dt }^{\frac12} \times \\
   &\bra{ \frac{\abs{I}}{2} \sum_{{\nu,\mu\in\cP_N(\cX)}} \sup_{t\in I} \Lambda\big(\mathbb{\Gamma}^{N;x,y;1}(t,\nu,\mu),\mathbb{\Gamma}^{N;x,y;2}(t,\nu,\mu)\big)}^{\frac12} . \notag
\end{align}
The second sum is uniformly bounded in $N$, since $\cX$ is finite and by Lemma~\ref{lem:UniConvGammaN} $Q^N \to Q$ uniformly with $Q$ continuous in the first argument on the compact space $\cP(\cX)$.
Now, from~\eqref{lscA:vv:bound}, we conclude that for some subsequence and all $x,y\in \cX$ we have
$\vv^{N;x,y} \rightharpoonup \vv^{x,y}$  with $\vv^{x,y}$ a Borel measure on $[0,T] \times \cP(\cX)\times \cP(\cX)$.
Using Jensen's inequality applied to the $1$-homogeneous jointly convex function $\R \times \R_+^2 \ni (v,a,b)\mapsto \frac{v^2}{\Lambda(a,b)}$, we get
\begin{align*}
  &\int_0^T \frac{1}{N}\bm\cA\bra{\bm c^N(t), \bm\psi^N(t)} \; dt\\
  &= \int_0^T \frac{1}{2} \sum_{x,y} \sum_{\nu\in \cP_N(\cX)} \frac{1}{N} \sum_{i=1}^N \sum_{\substack{\bx: L^N\bx =\nu \\ x_i = x}}\frac{\bra{ \bra{\bm\psi_{\bx^{i;y}}^{N}(t) - \bm\psi_{\bx}^{N}(t)} \bm w_{\bx\bx^{i;y}}^{N}(\bm{c}^{N}(t))}^2}{\bm{w}_{\bx\bx^{i;y}}^{N}(\bm{c}^{N}(t))} \; dt \\
  &\geq  \int_0^T \frac{1}{2} \sum_{x,y}   \sum_{\nu\in \cP_N(\cX)} \frac{\bra{\vv^N(t,\nu,\nu^{N;x,y})}^2}{\Lambda(\mathbb{\Gamma}^{N;x,y;1}(t,\nu,\nu^{N;x,y}) ,\mathbb{\Gamma}^{N;x,y;2}(t,\nu,\nu^{N;x,y}))} \; dt\;.
\end{align*}
The last term can be written as
\begin{align*}
   \frac{1}{2} \sum_{x,y} F(\vv^{N;x,y},\mathbb{\Gamma}^{N;x,y;1},\mathbb{\Gamma}^{N;x,y;2})\;,
 \end{align*}
where the functional $F$ on triples of measure on $[0,T]\times\cP(\cX)^2$ is defined via
 \begin{align*}
F(\vv,\mathbb{\Gamma}^1,\mathbb{\Gamma}^2) := \int_0^T\iint_{\cP(\cX)^2}\alpha\!\bra{\frac{d\vv}{d\sigma},\Lambda\!\bra{\frac{d\mathbb{\Gamma}^1}{d\sigma},\frac{d\mathbb{\Gamma}^2}{d\sigma}}} \, d\sigma\, dt \; ,
\end{align*}
with $\alpha$ the function defined in \eqref{e:def:alpha} and
$\sigma$ is any measure on $[0,T]\times\cP(\cX)^2$ such that
$\vv,\mathbb{\Gamma}^1,\mathbb{\Gamma}^2\ll\sigma$. The definition
does not depend on the choice of $\sigma$ by the $1$-homogeneity of
$\alpha$ and $\Lambda$.
Then, by a general result on lower semicontinuity of integral
functionals \cite[Thm.~3.4.3]{But89} we can conclude, that
\begin{align*}
  \liminf_{N\to\infty} F(\vv^{N;x,y},\mathbb{\Gamma}^{N;x,y;1},\mathbb{\Gamma}^{N;x,y;2})\geq F(\vv^{x,y},\mathbb{\Gamma}^{x,y;1},\mathbb{\Gamma}^{x,y;2})\;.
\end{align*}
In particular, this implies
\[
  d\vv^{x,y} \ll \Lambda\!\bra{\frac{d\mathbb{\Gamma}^{x,y;1}}{d\sigma} ,\frac{d\mathbb{\Gamma}^{x,y;2}}{d\sigma}} d\sigma,
\]
which by \eqref{lscA:CC1:weakto} and ~\eqref{lscA:CC2:weakto} is given in terms of
\[
  \Lambda\!\bra{\frac{d\mathbb{\Gamma}^{x,y;1}}{d\sigma}(t,\nu,\mu) ,\frac{d\mathbb{\Gamma}^{x,y;2}}{d\sigma}(t,\nu,\mu)} d\sigma= \delta_{\nu}(\mu)  \Lambda\!\bra{\nu_x Q_{xy}(\nu), \nu_y Q_{yx}(\nu)} \CC(t,d\nu)d t.
\]
Therefore, with the notation of Proposition~\ref{prop:Lio:GradientFields}, we obtain the statement
\begin{align*}
  &\liminf_{N\to\infty} \int_0^T \frac{1}{N}\bm\cA\bra{\bm c^N(t), \bm\psi(t)^N} \; dt \\
  &\geq \frac{1}{2} \sum_{x,y}  \int_0^T \int_{\cP(\cX)} \frac{\bra{\VV_{xy}(t,\nu)}^2}{\Lambda(\nu_x Q_{xy}(\nu), \nu_y Q_{yx}(\nu)) } \CC(t,d\nu) \; dt \\
  &= \int_0^T \vec\AA(\CC(t),\VV(t)) \; dt \qquad\text{with}\qquad \VV_{xy}(t,\nu) := \frac{d\vv^{x,y}}{d\CC(t)dt } .
\end{align*}
From the convergence of the vector field $\vv^{N;x,y} \rightharpoonup \vv^{x,y}$ it is straightforward to check that $(\CC,\VV)\in \vec\CCEE_T(\CC_0,\CC_T)$ and hence by the conclusion of Proposition~\ref{prop:Lio:GradientFields}, there exists $\PPsi:[0,T]\times \cP(\cX)\to \R^{\cX}$ such that
\begin{align*}
 \liminf_{N\to\infty} \frac{1}{N}\int_0^T \bm\cA\bra{\bm c^N(t), \bm\psi^N(t)}  dt \geq \int_0^T \vec\AA(\CC(t),\VV(t)) \; dt \geq \int_0^T \AA(\CC(t),\PPsi(t)) \; dt ,
\end{align*}
which concludes the first part.

The $\liminf$ estimate of the Fisher information follows by a similar
but simpler argument. The convex $1$-homogeneous function
$\lambda(a,b)=(a-b)\bra{\log a - \log b}$ allows to rewrite
\begin{align*}
   &\frac{1}{N} \bm{\cI}^{N}\left(\bm{{c}}^{N}(t)\right) dt \\
   &= \frac{1}{2} \sum_{x,y} \sum_{\nu\in \cP_N(\cX)} \frac{1}{N} \sum_{i=1}^N \sum_{\substack{\bx: L^N\bx =\nu \\ x_i = x}} \lambda\!\bra{\bm{c}_{\bx}^N(t) \bm{Q}_{\bx\bx^{i;y}}^{N}(\bm{c}^{N}(t)) , \bm{c}_{\bx^{i;y}}^{N}(t) \bm Q^N_{\bx^{i;y} \bx}(\bm{c}^{N}(t)) } \\
   &\geq \frac{1}{2} \sum_{x,y} \iint_{\cP(\cX)^2} \lambda\!\bra{ \frac{d\mathbb{\Gamma}^{N;x,y;1}(t)}{d \sigma},\frac{d\mathbb{\Gamma}^{N;x,y;2}(t)}{d\sigma}} d\sigma .
\end{align*}
Then, the result follows by an application of \cite[Thm.~3.4.3]{But89}.
\end{proof}
In order to apply Theorem \ref{thm_serfaty}, we still need to prove
that a sequence of $N$-particle dynamics starting from nice initial
conditions is tight.
\begin{lemma}\label{lem:SkorokhodTight}
  Let $\mathbf{X}^N$ be the continuous Markov jump process with
  generator~\eqref{e:def:PartGenerator}, then the sequence of laws of empirical
  measures is tight in the Skorokhod topology, i.e.\@ it holds for any
  $x \in \cX$ and $\eps>0$
\begin{equation} \label{tightness_estimate}
\underset{\delta \to 0}{\lim} \; \underset{N \to \infty}{\limsup} \; \mathbb{P}\left[\underset{|t-s|\leq \delta}{\sup} \left|L_{x}^N\!(\mathbf{X}^N\!(t)) - L_{x}^N\!(\mathbf{X}^{N}\!(s))\right| > \epsilon\right] = 0.
\end{equation}
\end{lemma}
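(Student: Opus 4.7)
The plan is to apply Dynkin's formula to the coordinate functional $f_x(\bm{x}) := L^N_x(\bm{x}) = N^{-1}\sum_{i=1}^N \mathbf{1}\{x_i = x\}$ and to split the increment $f_x(\mathbf{X}^N(t)) - f_x(\mathbf{X}^N(s))$ into a drift with uniformly bounded velocity plus a martingale whose running supremum vanishes as $N \to \infty$. Fix throughout a time horizon $T<\infty$ on which we work.

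First, from the explicit form of the generator \eqref{e:def:PartGenerator} one computes
\begin{equation*}
\cL^N f_x(\bm{x}) = \frac{1}{N}\sum_{i=1}^N \sum_{y\in\cX} \bigl(\mathbf{1}\{y=x\} - \mathbf{1}\{x_i=x\}\bigr)\,Q^N_{x_i,y}(L^N(\bm{x})).
\end{equation*}
By Lemma~\ref{lem:UniConvGammaN} the family $\{Q^N_{x,y}\}_N$ converges uniformly on the compact space $\cP(\cX)$ to the continuous (hence bounded) limit $Q_{x,y}$, so there exists a constant $C_Q<\infty$ independent of $N$ with $Q^N_{x,y}(\mu)\leq C_Q$ for every $\mu\in\cP_N(\cX)$ and every $x,y\in\cX$. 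This yields the uniform bound $|\cL^N f_x(\bm{x})|\leq C_Q|\cX|=:C$, and hence $\bigl|\int_s^t \cL^N f_x(\mathbf{X}^N(r))\, dr\bigr|\leq C(t-s)$.

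By Dynkin's formula the process $M^N_x(t) := f_x(\mathbf{X}^N(t)) - f_x(\mathbf{X}^N(0)) - \int_0^t \cL^N f_x(\mathbf{X}^N(r))\, dr$ is a martingale. Since each elementary jump of the particle system modifies $f_x$ by at most $1/N$, and since each of the $N|\cX|$ available jump channels carries a rate at most $C_Q$, the predictable quadratic variation satisfies
\begin{equation*}
\langle M^N_x\rangle_T \leq \int_0^T \sum_{i=1}^N\sum_{y\in\cX} \frac{C_Q}{N^2}\,dr \leq \frac{C_Q|\cX|T}{N}.
\end{equation*}
Doob's $L^2$ maximal inequality then yields $\mathbb{E}\bigl[\sup_{t\in[0,T]}|M^N_x(t)|^2\bigr]\leq 4 C_Q|\cX|T/N \to 0$, so $\sup_{t\in[0,T]}|M^N_x(t)| \to 0$ in probability.

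Finally, for $s,t\in[0,T]$ with $|t-s|\leq\delta$ we have the pointwise decomposition $|L^N_x(\mathbf{X}^N(t))-L^N_x(\mathbf{X}^N(s))|\leq C\delta + 2\sup_{r\in[0,T]}|M^N_x(r)|$. Choosing $\delta<\eps/(2C)$, the probability in \eqref{tightness_estimate} is bounded by $\mathbb{P}\bigl[\sup_{r\in[0,T]}|M^N_x(r)|>\eps/4\bigr]$, which tends to $0$ by the previous step; letting $\delta\to 0$ afterwards is then harmless. The only ingredient beyond routine compensated-jump-process calculus is the uniform-in-$N$ boundedness of the rates, and that is exactly what Lemma~\ref{lem:UniConvGammaN} supplies, so no serious obstacle is expected.
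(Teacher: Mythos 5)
Your proposal is correct and follows essentially the same route as the paper: decompose $L^N_x(\mathbf{X}^N(t)) - L^N_x(\mathbf{X}^N(s))$ into a drift term with a uniform $O(|t-s|)$ bound plus a martingale, show the quadratic variation is $O(N^{-1})$ using the uniform bound on $Q^N$, and conclude with Doob's maximal inequality. Your computation of the predictable bracket via $\sum_{\text{channels}}(\Delta f)^2\cdot(\text{rate})$ is a bit more streamlined than the paper's explicit carr\'e-du-champ expansion, but the argument and the key estimate ($\langle M^N_x\rangle_T \lesssim N^{-1}$) coincide.
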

The proof follows standard arguments for tightness of empirical
measures of sequences of interacting particle systems. Since there is
no original argument here, the exposition shall be brief, and we refer
to \cite{KipnisLandim} for more details, in a more general context of
interacting particle systems. For example, see the first step in the proof of Theorem 2.1 in \cite{KipnisLandim} for those arguments in the context of the simple exclusion process on a discrete torus.
\begin{proof}
The process
\[\begin{split}
M^N_{x}(t) &= L_{x}^N\!(\mathbf{X}^{N}\!(t)) - L_{x}^N\!(\mathbf{X}^{N}\!(0)) \\
&\quad - \int_0^t{\underset{y \neq x}{\sum} \; L_{x}^N\!(\mathbf{X}^{N}\!(s))\,Q_{xy}^N(L^N\!(\mathbf{X}^{N}\!(s))) - L_{y}^N\!(\mathbf{X}^{N}\!(s))\,Q_{yx}^N(L^N\!(\mathbf{X}^{N}\!(s))) \, ds}
  \end{split}\]
is a martingale. Since the rates are bounded,
\[
  \bigg| \int_s^t{\underset{y \neq x}{\sum} \; L_{x}^N\!(\mathbf{X}^{N}\!(r))\,Q_{xy}^N(L^N\!(\mathbf{X}^{N}\!(r))) - L_{y}^N\!(\mathbf{X}^{N}\!(r))\,Q_{yx}^N(L^N\!(\mathbf{X}^{N}\!(r)))\,dr}\bigg| \leq C|t -s|
\]
and therefore, to prove \eqref{tightness_estimate}, it is enough to show that
\[
  \mathbb{P}\!\left[\underset{|t-s|\leq \delta}{\sup} \hspace{1mm} \left|M^N_{x}(t) - M_{x}^N(s)\right| > \epsilon \right]
\]
is small. To do so, we shall estimate the quadratic variation of the martingale. It is given by
\begin{align*}
 &\langle M^N \rangle(t) = \frac{1}{N^2}\underset{y\neq x}{\sum}\big((NL_{x}^N\!(\mathbf{X}^{N}\!(t)) - 1)^2 - (NL_{x}^N\!(\mathbf{X}^{N}\!(t)))^2\big)\,Q_{xy}^N(L^N\!(\mathbf{X}^{N}\!(s))) + \\
&\Big((NL_{x}^N\!(\mathbf{X}^{N}\!(t))+ 1\big)^2 - \big(NL_{x}^N\!(\mathbf{X}^{N}\!(t)\big)^2\Big)\,Q_{yx}^N(L^N\!(\mathbf{X}^{N}\!(s)))+ \\
&\frac{2}{N^2}\underset{y \neq x}{\sum} NL_{x}^N\!(\mathbf{X}^{N}(t))\Big( L_{x}^N\!(\mathbf{X}^{N}\!(s))\,Q_{xy}^N\big(L^N\!(\mathbf{X}^{N}\!(s))\big) - L_{y}^N\!(\mathbf{X}^{N}\!(s))\,Q_{yx}^N\big(L^N\!(\mathbf{X}^{N}\!(s))\big)\Big).
\end{align*}
Using the boundedness of the rates, it is straightforward to see that
$$|\langle M^N \rangle(t)| \leq CN^{-1}.$$
The quadratic variation of the martingale vanishes. From Doob's martingale inequality, we deduce that for any $\epsilon > 0$
$$\mathbb{P}\left[\underset{0 \leq s \leq t \leq T}{\sup} |M^N_{x}(t) - M^N_{x}(s)| > \epsilon \right] \longrightarrow 0.$$
Tightness of the sequence of processes follows.
\end{proof}
We can now combine the work done so far to obtain our main result.
\begin{theorem}[Convergence of the particle system to the mean field equation]
  \label{thm:PStoMF}
  Let $\mathbf{c}^N$ be a solution to \eqref{e:def:PartEvol}.
  Moreover assume its initial distribution to be well-prepared
  \begin{equation*}
    \frac{1}{N} \bm\cF^{N}(\mathbf{c}^N(0)) \to \FF(\CC(0)) - \cF_0
    \qquad\text{with}\qquad  L^N_{\#} \mathbf{c}^N(0) \rightharpoonup \CC(0)
    \qquad\text{as } N\to \infty.
  \end{equation*}
  Then it holds
  \begin{equation*}
    L^N_\# \mathbf{c}^N(t) \rightharpoonup \CC(t) \qquad\text{ for all }t\in(0,\infty)\;,
  \end{equation*}
  with $\CC$ a weak solution to~\eqref{e:Liouville} and moreover
  \begin{equation}\label{e:PStoMF:Energy}
    \frac{1}{N}\bm\cF^{N}(\mathbf{c}^N(t))
    \to \FF(\CC(t)) - \cF_0 \qquad\text{for all~$t\in (0,\infty)$} .
  \end{equation}
\end{theorem}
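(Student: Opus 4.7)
The overall plan is to apply the Sandier-Serfaty-type convergence Theorem \ref{thm_serfaty} to the sequence of particle curves $\mathbf{c}^N$. Each $\mathbf{c}^N$ is a curve of maximal slope for $\bm\cJ^N$ because it solves the $N$-particle evolution \eqref{e:def:PartEvol}, and the three $\liminf$ hypotheses of that theorem are exactly supplied by Propositions \ref{prop:liminf:Ent} and \ref{prop:liminf:MDslopes}. The remaining work is to produce a $\tau$-convergent subsequence along which to run the theorem, and then to upgrade subsequential convergence to full convergence by identifying the limit as the unique weak solution of \eqref{e:Liouville:weak}.

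Fix $T>0$. The first step is compactness: the modulus estimate \eqref{tightness_estimate} of Lemma \ref{lem:SkorokhodTight}, together with the automatic tightness of $L^N_\#\mathbf{c}^N(t)$ in the compact space $\cP(\cP(\cX))$, yields after passing to a subsequence a limit $\CC^* \in C([0,T],\cP(\cP(\cX)))$ such that $L^N_\#\mathbf{c}^N \stackrel{\tau}{\to} \CC^*$ in the sense of Definition \ref{def:tauConv}, with $\CC^*(0)=\CC(0)$ by well-preparedness. Next, since $\bm\cJ^N(\mathbf{c}^N)=0$, the well-preparedness of $\frac{1}{N}\bm\cF^N(\mathbf{c}^N(0))$ and the lower bound $\frac{1}{N}\bm\cF^N(\mathbf{c}^N(T)) \geq \FF(\CC^*(T))-\cF_0 - o(1)$ from Proposition \ref{prop:liminf:Ent} combine to produce a uniform bound on the sum $\int_0^T \frac{1}{N}\bigl(\bm\cA^N(\mathbf{c}^N,\bm\psi^N)+\bm\cI^N(\mathbf{c}^N)\bigr)\,dt$. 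This allows Proposition \ref{prop:liminf:MDslopes} to be applied, which simultaneously gives absolute continuity of $\CC^*$ with respect to $\WW$, produces a driving potential $\PPsi$, and verifies the $\liminf$-estimates \eqref{e:Serfaty:derivs} and \eqref{e:Serfaty:slopes}.

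With all hypotheses of Theorem \ref{thm_serfaty} in force, we conclude that $\CC^*$ is a curve of maximal slope for $\JJ$ on $[0,T]$. By Proposition \ref{prop:Lio:GF} this means that $\CC^*$ is a weak solution of the Liouville equation \eqref{e:Liouville:weak} with initial datum $\CC(0)$. Since the vector field $\nu\mapsto \nu Q(\nu)$ is Lipschitz by Lemma \ref{lem:UniConvGammaN}, the weak solution is unique (cf.\ the comment after \eqref{e:Liouville:weak}), so $\CC^*=\CC$. Because the extracted subsequence was arbitrary, the full sequence $L^N_\#\mathbf{c}^N(t)$ converges to $\CC(t)$ for every $t\in[0,T]$, and letting $T$ be arbitrary gives convergence on $[0,\infty)$. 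The energy convergence \eqref{e:PStoMF:Energy} then follows directly from the corresponding conclusion in Theorem \ref{thm_serfaty}.

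The most delicate step is the second one: one must convert the Skorokhod tightness of Lemma \ref{lem:SkorokhodTight}, which concerns random particle trajectories, into a $\tau$-compactness statement for the deterministic curves $L^N_\#\mathbf{c}^N$ of probability measures on $\cP(\cX)$ relevant for the gradient-flow framework, and then verify that the limiting path is absolutely continuous with respect to $\WW$ rather than merely continuous. The uniform action bound coming from the energy identity $\bm\cJ^N(\mathbf{c}^N)=0$ and the \emph{a priori} lower bound of Proposition \ref{prop:liminf:Ent} are precisely what drives this, and they also control the Hölder modulus of continuity discussed in Section \ref{sec:metric-gf}. Everything else in the proof is an orchestration of results already established.
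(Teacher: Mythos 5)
Your proposal is correct and follows essentially the same route as the paper's own proof: extract a $\tau$-convergent subsequence via the tightness Lemma~\ref{lem:SkorokhodTight}, invoke Propositions~\ref{prop:liminf:Ent} and~\ref{prop:liminf:MDslopes} to feed Theorem~\ref{thm_serfaty}, identify the limit as the unique weak solution of the Liouville equation via Proposition~\ref{prop:Lio:GF} and the Lipschitz continuity from Lemma~\ref{lem:UniConvGammaN}, and upgrade subsequential to full convergence by uniqueness. You fill in some intermediate bookkeeping (the uniform action/Fisher bound coming from $\bm\cJ^N(\mathbf{c}^N)=0$ plus well-preparedness and the entropy $\liminf$) that the paper leaves implicit, but the structure and the key lemmas invoked are identical.
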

\begin{proof}
  Fix $T>0$. By the tightness Lemma~\ref{lem:SkorokhodTight}, we have
  that the sequence of empirical measures $L^N_\# \mathbf{c}^N:
  [0,T] \to \cP(\cP(\cX))$ is tight w.r.t.~the Skorokhod topology
  \cite[Theorem 13.2]{Billingsley2012}. Hence, there exist a
  measurable curve $\CC: [0,T]\to \cP(\cP(\cX))$ such that up to a
  subsequence $L^N_\# \mathbf{c}^N(t)$ weakly converges to
  $\CC(t)$ for all $t\geq0$. By the Propositions~\ref{prop:liminf:Ent}
  and~\ref{prop:liminf:MDslopes}, we get from
  Theorem~\ref{thm_serfaty} that~\eqref{e:PStoMF:Energy} holds and
  $\CC$ is curve of maximal slope for the functional $\JJ$.  By
  Proposition~\ref{prop:Lio:GF}, it is characterized as weak solution
  to~\eqref{e:Liouville}. By Lemma~\ref{lem:UniConvGammaN} the
  limiting rate matrix $Q$ is Lipschitz on $\cP(\cX)$ providing uniqueness of the Liouville equation~\eqref{e:Liouville}. Hence, the convergence actually holds for the full sequence.
\end{proof}
\begin{corollary}
  In the setting of Theorem~\ref{thm:PStoMF} assume in addition that
  \begin{equation*}
  L_{\#}^N \bm{c}^N(0) \rightharpoonup \delta_{c(0)} \qquad \text{for some }\qquad c(0) \in \cP(\cX)\;.
  \end{equation*}
    Then it holds
  \begin{equation*}
  L^N_\# \mathbf{c}^N(t) \rightharpoonup \delta_{c(t)} \qquad\text{ for all }t\in(0,\infty)\;,
  \end{equation*}
  with $c$ a solution to~\eqref{eq_evol_nonlin} and moreover
  \begin{equation*}
    \frac{1}{N}\bm\cF^{N}(\mathbf{c}^N(t))
    \to \cF(c(t)) - \cF_0 \qquad\text{for all~}t\in (0,\infty)\;.
  \end{equation*}
\end{corollary}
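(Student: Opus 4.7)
By Theorem~\ref{thm:PStoMF} applied to the initial condition $L^N_\# \bm c^N(0) \rightharpoonup \delta_{c(0)}$, we obtain that $L^N_\# \bm c^N(t) \rightharpoonup \CC(t)$ for all $t\in(0,\infty)$, where $\CC\in C([0,\infty),\cP(\cP(\cX)))$ is the unique weak solution of the Liouville equation~\eqref{e:Liouville} with $\CC(0)=\delta_{c(0)}$, and moreover $\frac1N\bm\cF^N(\bm c^N(t))\to\FF(\CC(t))-\cF_0$. The plan is therefore reduced to identifying $\CC(t)=\delta_{c(t)}$, where $c$ solves the mean-field equation \eqref{eq_evol_nonlin} with initial datum $c(0)$.

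First, the nonlinear ODE $\dot c(t)=c(t)Q(c(t))$ with $c(0)$ as initial condition admits a unique global solution on $\cP(\cX)$ by the Lipschitz continuity of $\mu\mapsto Q(\mu)$ ensured by Lemma~\ref{lem:UniConvGammaN}. Define $\widetilde\CC(t):=\delta_{c(t)}$. The plan is to verify that $\widetilde\CC$ is a weak solution of~\eqref{e:Liouville:weak} with initial datum $\delta_{c(0)}$; since the Liouville equation admits a unique weak solution (as used in the proof of Theorem~\ref{thm:PStoMF}), this will force $\CC=\widetilde\CC$.

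To check the weak formulation, fix $\varphi\in C_c^1([0,T]\times\cP(\cX))$ and compute
\begin{align*}
  \int_0^T\int_{\cP(\cX)}\bigl(\dot\varphi(t,\nu)-\skp{\nabla\varphi(t,\nu)}{\nu Q(\nu)}\bigr)\widetilde\CC(t,d\nu)\,dt
  &= \int_0^T\bigl(\dot\varphi(t,c(t))-\skp{\nabla\varphi(t,c(t))}{c(t)Q(c(t))}\bigr)\,dt \\
  &= \int_0^T\frac{d}{dt}\varphi(t,c(t))\,dt - \int_0^T\skp{\nabla\varphi(t,c(t))}{\dot c(t)+c(t)Q(c(t))-\dot c(t)}\,dt,
\end{align*}
where in the last line we substituted $\dot c(t)=c(t)Q(c(t))$; the two directional-derivative terms cancel and what remains is $\int_0^T\frac{d}{dt}\varphi(t,c(t))\,dt=0$ by compact support of $\varphi$ in time. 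Hence $\widetilde\CC$ satisfies~\eqref{e:Liouville:weak}, and by uniqueness $\CC(t)=\delta_{c(t)}$ for all $t\ge 0$.

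Finally, evaluating the free energy gives
\begin{equation*}
  \FF(\CC(t))-\cF_0 = \int_{\cP(\cX)}\cF(\nu)\,\delta_{c(t)}(d\nu)-\cF_0 = \cF(c(t))-\cF_0,
\end{equation*}
and the weak convergence $L^N_\# \bm c^N(t)\rightharpoonup\delta_{c(t)}$ together with the energy convergence of Theorem~\ref{thm:PStoMF} yields both claims. The only conceivable obstacle is the uniqueness step; but this is already built into the proof of Theorem~\ref{thm:PStoMF} via the Lipschitz regularity of $Q$, so no additional work is required.
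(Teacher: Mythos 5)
Your approach is genuinely different from the paper's. The paper derives this corollary from Theorem~\ref{thm:PStoMF} together with the variance estimate in Lemma~\ref{lem:VarEst}: the Gronwall bound $\var\bigl(L_x^N(\bX^N(t))\bigr)\le e^{Ct}\bigl(\var\bigl(L_x^N(\bX^N(0))\bigr)+O(N^{-1})\bigr)$ shows that concentration of the empirical measure propagates at the particle level, so the limit $\CC(t)$ produced by Theorem~\ref{thm:PStoMF} is forced to be a Dirac mass. You instead argue entirely at the macroscopic level: Theorem~\ref{thm:PStoMF} already identifies $\CC$ as the unique weak solution of the Liouville equation with $\CC(0)=\delta_{c(0)}$, and you propose to exhibit $\widetilde\CC(t)=\delta_{c(t)}$ (with $c$ solving~\eqref{eq_evol_nonlin}) as such a weak solution and conclude by the same uniqueness already used in the proof of Theorem~\ref{thm:PStoMF}. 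This is a cleaner, more PDE-theoretic route that bypasses the propagation-of-chaos estimate of Appendix~\ref{S:variance}, and both routes are legitimate.

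The one place your proof needs repair is the verification that $\widetilde\CC$ satisfies the weak Liouville equation. The chain rule gives $\dot\varphi(t,c(t))=\tfrac{d}{dt}\varphi(t,c(t))-\skp{\nabla\varphi(t,c(t))}{\dot c(t)}$, hence
\begin{equation*}
  \dot\varphi(t,c(t))-\skp{\nabla\varphi(t,c(t))}{c(t)Q(c(t))}
  =\tfrac{d}{dt}\varphi(t,c(t))-\skp{\nabla\varphi(t,c(t))}{\dot c(t)+c(t)Q(c(t))},
\end{equation*}
not $\dot c(t)+c(t)Q(c(t))-\dot c(t)$ as in your display; the trailing ``$-\dot c(t)$'' is spurious, and after substituting $\dot c=cQ(c)$ the corrected identity leaves $-2\int_0^T\skp{\nabla\varphi}{cQ(c)}\,dt$, which does not vanish. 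What does vanish under the chain rule and $\dot c=cQ(c)$ is $\int_0^T\bigl(\dot\varphi(t,c(t))+\skp{\nabla\varphi(t,c(t))}{c(t)Q(c(t))}\bigr)\,dt=\int_0^T\tfrac{d}{dt}\varphi(t,c(t))\,dt=0$, i.e.\ the weak form of $\partial_t\CC+\Div_{\cP(\cX)}\bigl(\CC\,\nu Q(\nu)\bigr)=0$ with a \emph{plus} sign in front of the transport term --- the usual transport/Liouville equation for the flow $\dot c=cQ(c)$. Since~\eqref{e:Liouville:weak} as printed carries a minus sign, you must first reconcile the sign conventions relating $\nabla$, $\delta\VV$ and $\cK$ before invoking it; once that is squared away, the Dirac verification is the standard one-line chain-rule computation and your argument closes.
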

\begin{proof}
  The proof is a direct application of Theorem~\ref{thm:PStoMF} and a variance estimate
  for the particle system (Lemma~\ref{lem:VarEst}).
\end{proof}
\section{Properties of the metric\texorpdfstring{ $\cW$}{}}\label{S:metric}
In this section, we give the proof of Propostion~\ref{prop_metric}
stating that $\cW$ defines a distance on $\cP(\cX)$ and that the
resulting metric space is seperable, complete and geodesic. The proof
will be accomplished by a sequence of lemmas giving various estimates
on and properties of $\cW$. Some work is needed in particular to show
finiteness of $\cW$.
\begin{lemma}\label{lemma:sqrt}
  For $\mu,\nu,$ and $T>0$ we have
  \begin{equation*}\begin{split} \cW(\mu, \nu) &= \inf \set{ \int_0^T
        \sqrt{\cA({c}(t), \psi(t))} \; dt : ({c},\psi)\in
        \CE_{T}(\mu,\nu)}
      .\end{split}\end{equation*}
\end{lemma}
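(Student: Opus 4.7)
The plan is to exploit the $2$-homogeneity of $\cA(\mu,\cdot)$ in the potential, which makes $\sqrt{\cA}$ one-homogeneous and the length functional $L_T(c,\psi):=\int_0^T\sqrt{\cA(c(t),\psi(t))}\,dt$ reparametrization-invariant, and then combine this invariance with Cauchy-Schwarz and a constant-speed reparametrization.

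The first step I would carry out is a reparametrization lemma: if $\sigma:[0,T']\to[0,T]$ is absolutely continuous, non-decreasing, and surjective, and $(c,\psi)\in\CE_T(\mu,\nu)$, then
\[
  \tilde c(s):=c(\sigma(s)),\qquad \tilde\psi(s):=\sigma'(s)\,\psi(\sigma(s))
\]
defines a pair in $\CE_{T'}(\mu,\nu)$, since the factor $\sigma'$ exactly compensates for the chain rule in the continuity equation. A direct computation using the $2$-homogeneity of $\cA(\mu,\cdot)$ gives $\cA(\tilde c(s),\tilde\psi(s))=\sigma'(s)^2\,\cA(c,\psi)(\sigma(s))$, whence $\sqrt{\cA(\tilde c,\tilde\psi)}(s)=\sigma'(s)\,\sqrt{\cA(c,\psi)}(\sigma(s))$ and, by change of variables, $L_{T'}(\tilde c,\tilde\psi)=L_T(c,\psi)$. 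In particular, $\inf L_T$ does not depend on $T$.

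Next I would prove the two inequalities. For $\inf L_T\leq\cW(\mu,\nu)$, fix $T=1$ and apply Cauchy-Schwarz: $L_1(c,\psi)\leq\bigl(\int_0^1\cA(c,\psi)\,dt\bigr)^{1/2}$ for every $(c,\psi)\in\CE_1(\mu,\nu)$, then invoke the reparametrization lemma to extend from $T=1$ to general $T$. For the converse $\cW\leq\inf L_T$, given $(c,\psi)\in\CE_T(\mu,\nu)$ with $L:=L_T(c,\psi)>0$, reparametrize to (approximately) constant speed on $[0,1]$ by setting
\[
  s_\varepsilon(t):=\frac{1}{L+\varepsilon T}\int_0^t\bigl(\sqrt{\cA(c,\psi)}(r)+\varepsilon\bigr)\,dr,\qquad \varepsilon>0,
\]
a bi-Lipschitz bijection $[0,T]\to[0,1]$. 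Reparametrizing with $\sigma_\varepsilon:=s_\varepsilon^{-1}$ yields $(\tilde c_\varepsilon,\tilde\psi_\varepsilon)\in\CE_1(\mu,\nu)$, and using $\sigma_\varepsilon'(s_\varepsilon(t))=(L+\varepsilon T)/(\sqrt{\cA(c,\psi)}(t)+\varepsilon)$ one computes
\[
  \int_0^1\cA(\tilde c_\varepsilon,\tilde\psi_\varepsilon)\,ds=(L+\varepsilon T)\int_0^T\frac{\cA(c,\psi)(t)}{\sqrt{\cA(c,\psi)}(t)+\varepsilon}\,dt\;\xrightarrow[\varepsilon\to 0]{}L^2
\]
by dominated convergence (the integrand is bounded by $\sqrt{\cA(c,\psi)}$, which is integrable since $L<\infty$). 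Hence $\cW(\mu,\nu)^2\leq L^2$, and taking the infimum over $(c,\psi)\in\CE_T(\mu,\nu)$ gives the claim. The degenerate case $L=0$ forces $\cA(c,\psi)\equiv 0$ a.e., so the continuity equation yields $\dot c\equiv 0$ and $\mu=\nu$, in which case both sides vanish trivially.

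The only real obstacle is that $\sqrt{\cA(c,\psi)}$ can vanish on sets of positive measure, so the naive constant-speed reparametrization $s(t)=L^{-1}\int_0^t\sqrt{\cA}$ is not bijective; the $\varepsilon$-regularization above is a standard way to bypass this, and the rest of the argument parallels the classical Benamou-Brenier reparametrization as in~\cite[Thm.~5.17]{Dolbeault2008}.
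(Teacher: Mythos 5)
Your proof is correct and follows the same standard reparametrization argument (one-homogeneity of $\sqrt{\cA}$ plus Cauchy-Schwarz plus $\varepsilon$-regularized constant-speed reparametrization) that the paper invokes by citing Dolbeault--Nazaret--Savar\'e; the paper simply does not spell out the details, while you do.
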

\begin{proof}
  This follows from a standard reparametrization
  argument. See for instance \cite[Thm.~5.4]{Dolbeault2008} for
  details in a similar situation.
\end{proof}
From the previous lemma we easily deduce the
triangular inequality
\begin{align}\label{eq:triangular}
  \cW(\mu,\eta) \leq \cW(\mu,\nu) + \cW(\nu,\eta)\quad\forall \mu,\nu,\eta\in\cP(\cX)\;,
\end{align}
by concatenating two curves $(c,\psi)\in\CE_T(\mu,\nu)$ and
$(c',\psi')\in\CE_T(\nu,\eta)$ to form a curve in
$\CE_{2T}(\mu,\eta)$.

The next sequence of lemmas puts $\cW$ in relation
with the total variation distance on $\cP(\cX)$. To proceed, we
define similarly to \cite{Maas2011}, for every $\mu\in\cP(\cX)$, the
matrix
\begin{equation*}
  B_{xy}(\mu):=\begin{cases} \sum_{z\neq x}w_{xz}(\mu), & x=y,\\-w_{xy}(\mu), & x\ne y\end{cases}
\end{equation*}
Now \eqref{e:def:W} can be rewritten as
\begin{equation*}
  \cW^{2}(\mu, \nu) = \inf \set{ \int_0^1 \skp{B({c}(t))\psi(t)}{\psi(t)} \; dt
  : ({c},\psi)\in  \CE_{1}(\mu,\nu)} ,
\end{equation*}
where $\skp{\psi}{\phi} = \sum_{x\in \cX} \psi_x \phi_x$ is the usual inner product on $\R^\cX$.
\begin{lemma}\label{lemma:bounds}
  For any $\mu,\nu \in \cP(\cX)$ holds
   \[\cW(\mu,\nu)\geq \frac{1}{\sqrt{2}} \|\mu-\nu\|.\]
  Moreover, for every $a>0,$ there exists a constant $C_{a},$ such that for all $\mu,\nu\in\cP^a(\cX)$
  \begin{align*}
    \cW(\mu,\nu)\leq C_{a} \|\mu-\nu\|\;.
  \end{align*}
\end{lemma}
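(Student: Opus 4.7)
For the lower bound, my plan is to fix an arbitrary curve $(c,\psi)\in\CE_1(\mu,\nu)$ and exploit the continuity equation together with two applications of Cauchy--Schwarz. From the symmetry $w_{xy}=w_{yx}$ one computes $(\delta v)_x = \sum_y w_{xy}(c)(\psi_y-\psi_x)$, so the continuity equation gives $\dot c_x(t) = -\sum_y w_{xy}(c(t))\bigl(\psi_y(t)-\psi_x(t)\bigr)$. Then I would write
\[
\|\mu-\nu\|^2 \;=\; \sum_x \Bigl(\int_0^1 \dot c_x(t)\,dt\Bigr)^{\!2} \;\leq\; \int_0^1 \sum_x |\dot c_x(t)|^2\,dt,
\]
and apply Cauchy--Schwarz to the inner sum, $|\dot c_x|^2 \leq \bigl(\sum_y w_{xy}(c)\bigr)\bigl(\sum_y w_{xy}(c)(\psi_y-\psi_x)^2\bigr)$, to bound the integrand by $2\,\cA(c(t),\psi(t))$ using the boundedness $w_{xy}\leq C_w$. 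Passing to the infimum over curves and using Lemma~\ref{lemma:sqrt} will yield $\|\mu-\nu\|\leq\sqrt{2}\,\cW(\mu,\nu)$.

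For the upper bound on $\cP^a(\cX)$, I would construct an explicit admissible curve by linear interpolation: set $c(t):=(1-t)\mu+t\nu$, which remains in $\cP^a(\cX)$ by convexity and satisfies $\dot c = \nu-\mu$, a vector that sums to zero. The plan is then to solve the linear divergence equation $(\delta v)_x = \mu_x-\nu_x$ on the graph underlying the Markov chain. Since each $Q(c(t))$ is irreducible, the associated graph is connected, so this system is solvable; in fact one can pick a spanning tree of the common edge set and route fluxes along it to obtain a $v$ independent of $t$ with $|v_{xy}|\leq C\|\mu-\nu\|$ for a purely graph-theoretic constant $C$. By Remark~\ref{rem:equiv-formulation} (or equivalently via Proposition~\ref{prop:Gradient fields}) it then suffices to bound $\int_0^1 \vec\cA(c(t),v)\,dt = \int_0^1 \tfrac12\sum_{x,y} v_{xy}^2/w_{xy}(c(t))\,dt$.

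The key quantitative ingredient, and the main obstacle of the proof, is to establish a uniform positive lower bound $w_{xy}(c(t)) \geq \kappa(a) > 0$ on the edges of the chosen spanning subgraph along the entire interpolation. Here I would argue as follows: on the compact convex set $\cP^a(\cX)$, each coordinate $c_x$ is bounded below by $a$; the rates $Q_{xy}(c)$ are Lipschitz on $\cP(\cX)$ and strictly positive on the edge set of the (fixed) irreducible skeleton, hence uniformly bounded below on $\cP^a$; and the logarithmic mean $\Lambda$ is continuous and strictly positive on $\R_{>0}^2$. Combining these gives the claimed $\kappa(a)>0$. With this in hand, $\vec\cA(c(t),v)\leq C_a\|\mu-\nu\|^2$ uniformly in $t\in[0,1]$, so integrating yields $\cW(\mu,\nu)\leq C_a\|\mu-\nu\|$.

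The most delicate point, worth flagging, is that the edge set $E_{c(t)}$ may a priori vary along the curve, which could threaten the uniformity of $\kappa(a)$; this is resolved by working with a common irreducible skeleton identified from the Lipschitz continuity and irreducibility hypotheses on $Q$, or by the natural choice (as in the Metropolis examples of Remark~\ref{rem:MarkovKernel}) where the adjacency structure is fixed once and for all.
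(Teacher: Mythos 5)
Both halves of your proposal contain real gaps, and for the upper bound the paper in fact takes a different (and more robust) route.

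\smallskip\noindent
\emph{Lower bound.} The chain of Cauchy--Schwarz inequalities is sound up to the estimate
$|\dot c_x|^2 \leq \bigl(\sum_y w_{xy}\bigr)\bigl(\sum_y w_{xy}(\psi_y-\psi_x)^2\bigr)$. But summing over $x$ produces the factor $\max_x\sum_y w_{xy}(c(t))$ in front of $2\cA(c(t),\psi(t))$, not a factor $1$. The pointwise bound $w_{xy}\leq C_w$ that you invoke gives $\sum_y w_{xy}\leq (|\cX|-1)C_w$, so your argument yields $\|\mu-\nu\|\leq\sqrt{2(|\cX|-1)C_w}\,\cW(\mu,\nu)$, \emph{not} $\|\mu-\nu\|\leq\sqrt{2}\,\cW(\mu,\nu)$. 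The stated constant $\tfrac1{\sqrt 2}$ rests on a normalization of the rates (it is inherited from the setting of \cite[Prop.~2.9]{EM11}, to which the paper defers); with an unnormalized rate matrix $Q$ this normalization has to be tracked explicitly, and your reasoning as written does not close the gap.

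\smallskip\noindent
\emph{Upper bound.} This is the more serious issue. Your plan is to construct one fixed vector field $v$ supported on a spanning tree and then bound $\vec\cA(c(t),v)=\tfrac12\sum v_{xy}^2/w_{xy}(c(t))$ via a uniform lower bound $w_{xy}(c(t))\geq\kappa(a)>0$ on the tree edges. The obstruction you flag at the end is real and is not resolved by ``working with a common irreducible skeleton'': Definition~\ref{def:GibbsPotential} only guarantees that $Q(\mu)$ is irreducible \emph{for each} $\mu$, and the edge set $E_\mu$ may genuinely vary with $\mu$. There is no reason that $\bigcap_{t} E_{c(t)}$ is connected, and even on a fixed open set where a tree $T\subset E_\mu$ persists, the individual weights $w_{xy}(\mu)$ with $(x,y)\in T$ are not uniformly bounded below on $\cP^a(\cX)$: an edge can carry arbitrarily small positive weight near the boundary of the region where it is present. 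So the key quantitative ingredient of your argument simply is not available in the stated generality (it would be fine in the special case where the support of $Q(\cdot)$ is a fixed graph, e.g.\ the Metropolis examples of Remark~\ref{rem:MarkovKernel}).

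The paper avoids this entirely. Following Maas, it introduces the weighted Laplacian $B(\mu)$ with $(B(\mu)\psi)_x=\sum_y w_{xy}(\mu)(\psi_x-\psi_y)$, observes that by irreducibility $B(\mu)$ restricts to an isomorphism $\tilde B(\mu)$ of the mean-zero subspace for every $\mu\in\cP^a(\cX)$, and then uses continuity of $\mu\mapsto\|\tilde B(\mu)^{-1}\|$ together with compactness of $\cP^a(\cX)$ to get uniform two-sided operator bounds. Along the linear interpolation one solves $\dot c(t)=B(c(t))\psi(t)$ for $\psi(t)=\tilde B(c(t))^{-1}(\nu-\mu)$, which gives $\cA(c(t),\psi(t))=\skp{\tilde B(c(t))^{-1}(\nu-\mu)}{\nu-\mu}\leq C_a\|\mu-\nu\|^2$ uniformly in $t$. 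In other words, rather than fixing a tree and a lower bound on each edge weight, one works with the inverse of $\tilde B(\mu)$ directly; this remains bounded even as individual edge weights degenerate, because what matters is only that the weighted graph stays connected. If you want to salvage your construction, you would need to replace the static spanning tree by a $t$-dependent flux allocation that is controlled by $\|\tilde B(c(t))^{-1}\|$---at which point you have essentially rederived the paper's argument.
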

\begin{proof}
  The proof of the lower bound on $\cW$ can be obtained very similar
  to~\cite[Proposition 2.9]{EM11}.

  Let us show the upper bound. Following Lemma A.1. in
  \cite{Maas2011}, we notice that for $\mu\in\cP^{a}(\cX),$ the map
  $\psi \mapsto B(\mu)\psi$ has an image of dimension $d-1$. In
  addition, the dimension of the space $\{a_x : \sum_{x\in\cX} a_x = 0
  \}$ is $d-1$, therefore the map is surjective.  From the above we
  get that the matrix $B(\mu)$ restricts to an isomorphism
  $\tilde{B}(\mu),$ on the $d-1$ dimensional space $\{a_{x}:\sum
  a_{x}=0\}.$ Now, since the mapping
  $\cP^{a}(\cX)\ni\mu\rightarrow\|\tilde{B}^{-1}(\mu)\|,$ is
  continuous with respect to the euclidean metric, we have an upper
  bound $\frac{1}{c}$ by compactness. Also
  $\cP^{a}(\cX)\ni\mu\rightarrow\|\tilde{B}(\mu)\|$ has an upper bound
  $C$ as a result of all entries in $B(\mu)$ being uniformly
  bounded. From that we get
  \begin{equation*}
    c\|\psi\|\leq\|B(\mu)\psi\|\leq C\|\psi\|, \forall \mu\in\cP^{a}(\cX)
  \end{equation*}
  for some suitable positive constants.

  Similarly to the proof of Lemma 3.19 in \cite{Maas2011}, for
  $t\in[0,1],$ we set ${c}(t)=(1-t)\mu +t \nu$ and note that
  ${c}(t)$ lies in $\cP^{a}(\cX),$ since it is a convex set. Since
  $\dot{{c}}(t)=\nu- \mu \in \operatorname{Ran} B({c}(t)),$ there exists a
  unique element $\psi(t)$ for which we have
  $\dot{{c}}(t)=B({c}(t))\psi(t),$ and $\|\psi(t)\|\leq
  \frac{1}{c}\|\mu-\nu\|$.

  From that we get
  \[\cW^2(\mu,\nu)\leq\int_{0}^{1}\skp{B({c}(t))\psi(t)}{\psi(t)}\; dt\leq\frac{1}{c^{2}}C\|\mu-\nu\|^{2}.\]\qedhere
\end{proof}
\begin{lemma}\label{lemma:connection}
  For every $\mu\in\cP(\cX), \epsilon>0$ there exists $\delta>0$ such
  that $\cW(\mu,\nu)<\epsilon,$ for every $\nu\in\cP(\cX),$ with
  $\|\mu-\nu\|\leq \delta$
\end{lemma}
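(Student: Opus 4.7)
The plan is to bootstrap from Lemma~\ref{lemma:bounds}: that lemma already controls $\cW$ by total variation between measures that are uniformly bounded away from the boundary of the simplex, so the essential difficulty is handling the possible non-strict positivity of $\mu$ (and of $\nu$). Fix once and for all a strictly positive reference measure $\pi \in \cP^a(\cX)$ for some $a>0$ (the uniform measure will do). For $\eta \in (0,1)$ I would introduce the mollified measures
\[
\mu_\eta := (1-\eta)\mu + \eta \pi, \qquad \nu_\eta := (1-\eta)\nu + \eta \pi,
\]
both of which lie in $\cP^{\eta a}(\cX)$, and exploit the triangle inequality \eqref{eq:triangular}:
\[
\cW(\mu,\nu) \leq \cW(\mu,\mu_\eta) + \cW(\mu_\eta,\nu_\eta) + \cW(\nu_\eta,\nu).
\]
The middle term is controlled by Lemma~\ref{lemma:bounds} applied inside $\cP^{\eta a}(\cX)$, yielding $\cW(\mu_\eta,\nu_\eta) \leq C_{\eta a}(1-\eta)\|\mu-\nu\|$. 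Everything then hinges on a \emph{uniform} smallness estimate on the two outer terms as $\eta\downarrow 0$, uniform in the base point (so in particular in $\nu$).

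For this uniform control, the natural linear interpolation $c(t) = (1-t)\nu + t\nu_\eta$ is too fast at $t=0$: the weights $w_{xy}(c(t))$ decay like $\min_x c_x(t) \sim t\eta$, producing a non-integrable $1/t$ singularity in the action. I propose instead the quadratic reparametrization
\[
c(t) := (1-t^2\eta)\nu + t^2\eta\,\pi, \qquad t\in[0,1],
\]
so that $c(0)=\nu$, $c(1)=\nu_\eta$, $c_x(t) \geq t^2\eta a$ for all $x\in\cX$, $t>0$, and $\dot c(t) = 2t\eta(\pi-\nu)$. Exactly as in the proof of Lemma~\ref{lemma:bounds}, the operator $B(c(t))$ restricts to an isomorphism $\tilde B(c(t))$ on $\{\xi\in\R^\cX : \sum_x\xi_x = 0\}$. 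Because $Q(c(t))$ is irreducible and its positive rates are bounded below uniformly (by the continuity of $Q$ and compactness of $\cP(\cX)$), a Poincar\'e-type inequality on the graph combined with the bound $w_{xy}(c(t)) \gtrsim t^2\eta$ on active edges produces the key estimate $\|\tilde B(c(t))^{-1}\| \leq C_1/(t^2\eta)$ with $C_1$ independent of $\nu$. Choosing $\psi(t) = \tilde B(c(t))^{-1}\dot c(t)$ gives $(c,\psi)\in \CE(\nu,\nu_\eta)$ with
\[
\cA(c(t),\psi(t)) = \langle \dot c(t),\psi(t)\rangle \leq \|\tilde B(c(t))^{-1}\|\,\|\dot c(t)\|^2 \leq \frac{C_1}{t^2\eta}\cdot 4t^2\eta^2\|\pi-\nu\|^2 \leq C_2\eta,
\]
so that $\cW(\nu,\nu_\eta)^2 \leq C_2\eta$, and by the same reasoning $\cW(\mu,\mu_\eta)^2 \leq C_2\eta$, both uniformly.

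To conclude, given $\eps>0$ I would first pick $\eta$ small enough that $2\sqrt{C_2\eta} < \eps/2$, which takes care of $\cW(\mu,\mu_\eta)+\cW(\nu,\nu_\eta)$, and then pick $\delta>0$ small enough that $C_{\eta a}\,\delta < \eps/2$, which takes care of the middle term via Lemma~\ref{lemma:bounds} whenever $\|\mu-\nu\|\leq \delta$. The only real obstacle is the uniformity of the outer estimate in the base point, and the point of the whole argument is that this is overcome by the quadratic time reparametrization $f(t)=t^2$, which exactly cancels the $1/t^2$ blow-up of $\|\tilde B^{-1}\|$ against the $t^2$ factor in $\|\dot c\|^2$, yielding a constant bound on the integrand and a cost of order $\eta$ independent of $\nu$.
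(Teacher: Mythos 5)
Your approach differs genuinely from the paper's. The paper freezes the rates at $Q(\mu)$, compares $\cW$ to the fixed-rate distance $\cW_{Q(\mu)}$ via the logarithmic-mean comparison \eqref{e:lem:connection:p1}, decomposes the displacement $\nu-\mu$ into at most $d-1$ moves along edges of $E_\mu$, and bounds each piece by the explicit two-point-space formula of \cite[Lemma 3.14]{Maas2011}. You instead mollify toward a strictly positive reference measure and use a quadratic time reparametrization so that the $t^2$ in $\|\dot c(t)\|^2$ exactly cancels the $1/t^2$ blow-up of $\|\tilde B(c(t))^{-1}\|$; this is a clean idea and, with the right hypotheses, sound, and it avoids the two-point formula entirely.

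However, as written there is a real gap, and it sits exactly at the subtlety the paper flags before its proof: the active-edge set $E_\rho=\{(x,y):Q_{xy}(\rho)>0\}$ depends on $\rho$. Your claim that ``its positive rates are bounded below uniformly (by the continuity of $Q$ and compactness of $\cP(\cX)$)'' is false. Continuity on a compact set gives a uniform \emph{upper} bound on $Q_{xy}$, not a uniform positive lower bound on the nonzero entries: a positive rate may decay continuously to $0$ as $\rho$ varies, and edges may appear or disappear. Consequently, the Poincar\'e constant $C_1$ in $\|\tilde B(c(t))^{-1}\|\leq C_1/(t^2\eta)$ is not, as presented, independent of $\nu$, and you never specify which graph the Poincar\'e inequality is applied to (it cannot be the $t$- and $\nu$-dependent set $E_{c(t)}$). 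The fix is to localize around the \emph{fixed} $\mu$ rather than invoke global compactness: by continuity of $Q$ at $\mu$ there exist $\rho>0$ and $q_{\min}>0$ such that $Q_{xy}(\cdot)\geq q_{\min}$ on $B_\rho(\mu)$ for every $(x,y)\in E_\mu$, and $(\cX,E_\mu)$ is connected because $Q(\mu)$ is irreducible. If $\eta$ and $\delta$ are chosen small enough that the entire curve $c(t)=(1-t^2\eta)\nu+t^2\eta\pi$ stays in $B_\rho(\mu)$, then $w_{xy}(c(t))\geq t^2\eta a\,q_{\min}$ on $E_\mu$, and the Poincar\'e constant of the fixed connected graph $(\cX,E_\mu)$ yields your estimate with a constant depending only on $\mu$. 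With this localization your argument closes; without it, the uniformity claim driving the bound has no basis, which is precisely the pitfall the paper's proof is designed to avoid.
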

The proof of this lemma is similar to the proof \cite[Theorem
3.12]{Maas2011} and uses comparison of $\cW$ to corresponding quantity
on the two point space $\cX=\{a,b\}$. However, significantly more care
is needed in the present setting to implement this argument. The
reason being that the set of pairs of points $x,y$ with
$Q_{x,y}(\mu)>0$ now depends on $\mu$.
\begin{proof}
Let $\epsilon>0$ and  $\mu\in \cP(\cX)$ be fixed. Since $\cX$ is finite, it holds with $E_\mu$ defined in~\eqref{e:def:allowsTrans}
\[
  \inf\set{ Q_{xy}(\mu) : (x,y) \in E_\mu } = a > 0 .
\]
Let $B_r(\mu) = \set{\nu\in \cP(\cX) : \| \nu - \mu \| < r}$ denote a
$r$-neighborhood around $\mu$. Since, $Q(\mu)$ is continuous in $\mu$,
there exists for $\eta>0$ a $\delta_{1} > 0$ s.t.
\[
  \forall \nu \in B_{\delta_{1}}(\mu) \quad\text{holds}\quad |Q(\nu)-Q(\mu)|_{L^\infty(\cX\times\cX)} \leq \eta .
\]
Especially, it holds by choosing $\eta\leq a/2$ that $E_\mu \subseteq E_\nu$ and in addition
\[
  \inf\set{ Q_{xy}(\nu) : (x,y) \in E_\mu, \nu\in B_{\delta_{1}}(\mu) } \geq a/2.
\]
For the next argument, observe that by the concavity of the
logarithmic mean and a first order Taylor expansion holds for $a,b ,
s,t, \eta >0$
\[\begin{split}
  \Lambda((s+\eta) a , (t+\eta) b ) &\leq \Lambda(s a , t b) + \eta\bra{ \partial_s \Lambda(s a, tb) + \partial_t \Lambda(s a, t b)} \\
  &= \Lambda(s a , t b) + \eta \bra{a \Lambda_1(s a , tb) + b \Lambda_2(s a, tb)},
\end{split}\]
where $\Lambda_i$ is the $i$-th partial derivative of $\Lambda$. Therefore we can estimate for $\nu \in B_\delta(\mu).$
\[
 \begin{split}
   &\Lambda(Q_{xy}(\nu) \nu(x), Q_{yx}(\nu) \nu(y) ) - \Lambda(Q_{xy}(\mu) \nu(x), Q_{yx}(\mu) \nu(y) ) \\
   &\leq \Lambda((Q_{xy}(\mu)+\eta) \nu(x), (Q_{yx}(\mu)+\eta) \nu(y) ) - \Lambda(Q_{xy}(\mu) \nu(x), Q_{yx}(\mu) \nu(y) ) \\
   &\leq \eta \Bigl(\nu(x) \Lambda_1(Q_{xy}(\mu) \nu(x), Q_{yx}(\mu) \nu(y) ) + \nu(y)\Lambda_2(Q_{xy}(\mu) \nu(x), Q_{yx}(\mu) \nu(y) )\Bigr) \\
   &\leq \frac{2\eta}{a} \Big(Q_{xy}(\mu)\nu(x) \Lambda_1\big(Q_{xy}(\mu)\nu(x),Q_{yx}(\mu)\nu(y)\big) \\&\qquad\qquad\qquad\qquad\qquad\qquad\qquad+ Q_{yx}(\mu)\nu(y) \Lambda_2\big(Q_{xy}(\mu)\nu(x),Q_{yx}(\mu)\nu(y)\big)\Big)\\& =  \frac{2\eta}{a}  \Lambda\big(Q_{xy}(\mu)\nu(x),Q_{yx}(\mu)\nu(y)\big) \leq \Lambda\big(Q_{xy}(\mu)\nu(x),Q_{yx}(\mu)\nu(y)\big),
 \end{split}
\]
Moreover, the last identity follows directly from the one-homogeneity
of the logarithmic mean. Furthermore, we used $\eta \leq \frac{a}{2}$
to obtain the last estimate.  Repeating the argument for the other
direction we get
\begin{equation}\label{e:lem:connection:p1}
\begin{split}\frac{1}{2}\Lambda(Q_{xy}(\mu) \nu(x), Q_{yx}(\mu) \nu(y) ) & \leq \Lambda(Q_{xy}(\nu) \nu(x), Q_{yx}(\nu) \nu(y) ) \\
  & \leq 2\Lambda(Q_{xy}(\mu) \nu(x), Q_{yx}(\mu) \nu(y))
  \end{split}\end{equation} Now, let ${c}$ be an absolutely
continuous curve with respect to $\cW_{Q(\mu)}$, where $\cW_{Q(\mu)}$
is the distance that corresponds to the linear Markov process with
fixed rates $Q(\mu),$ and lives inside the ball $B_{\delta_1}(\mu)$,
then it is also absolutely continuous w.r.t.~$\cW,$ and if
$\psi$ solves the continuity equation for ${c}$, with respect to
the rates $Q(\mu)$, then there exists a $\tilde{\psi}$, that solves the
continuity equation with respect to the variable rates $Q({c}(t))$ and
 \begin{equation}\label{estabove}
  \int_0^1 \cA({c}(t),\tilde{\psi}(t)) d{t} \leq 2 \int_0^1 \cA_{Q(\mu)}({c}(t),\psi(t)) d{t} ,
\end{equation}
where $\cA_{Q(\mu)}$ is the action with fixed rate kernel
$Q(\mu)$.

Indeed let $\psi$ be a solution for the continuity equation for
${c}$ with respect to the fixed rates $Q(\mu),$
i.e.  \[\dot{{c}}_{x}(t)=\sum_{y}(\psi_{y}(t)-\psi_{x}(t))\Lambda({c}_{x}(t)Q_{xy}(\mu),{c}_{y}(t)Q_{yx}(\mu)).\]
For $(x,y)\in E_{\mu},$ and $t\in[0,1]$ we define
 \[
    \tilde{v}_{xy}(t) := (\psi_{y}(t)-\psi_{x}(t)) \Lambda({c}_{x}(t)Q_{xy}(\mu),{c}_{y}(t)Q_{yx}(\mu)) .
 \]
 Then, it is easy to verify that $({c}, \tilde v) \in
 \vec\CE({c}(0),{c}(1))$
 (cf.~Definition~\ref{def:MF:continuity_equ}) and we can estimate
 \[\begin{split}
    \int_{0}^{1} \vecfield{\cA}({c}(t),\tilde{v}(t)) \; dt &=\int_{0}^{1} \frac{1}{2}\sum_{x,y} \alpha\!\bra{\tilde{v}_{xy}(t) , \Lambda({c}_{x}(t)Q_{xy}({c}(t)),{c}_{y}(t)Q_{yx}({c}(t)))} dt\\
   &= \int_{0}^{1} \frac{1}{2} \sum_{x,y} (\psi_{y}(t)-\psi_{x}(t))^{2} \frac{\Lambda({c}_{x}(t)Q_{xy}(\mu),{c}_{y}(t)Q_{yx}(\mu))}{\Lambda({c}_{x}(t)Q_{xy}({c}(t)),{c}_{y}(t)Q_{yx}({c}(t)))}\\ &\hspace{30pt}\times\Lambda({c}_{x}(t)Q_{xy}(\mu),{c}_{y}(t)Q_{yx}(\mu)) \; dt \\
   &\stackrel{{\eqref{e:lem:connection:p1}}}{\leq}\int_{0}^{1} \frac{1}{2} \sum_{x,y} (\psi_{y}(t)-\psi_{x}(t))^{2} \; 2\Lambda({c}_{x}(t)Q_{xy}(\mu),{c}_{y}(t)Q_{yx}(\mu))dt .
 \end{split}\] Now, the existence of $\tilde{\psi}$ is a
 straightforward application of Lemma \ref{prop:Gradient
   fields}.

 Having established~\eqref{estabove}, the final result will follow by
 a comparison with the two-point space for the Wasserstein distance
 with fixed rate kernel $Q(\mu)$.

 For $\nu\in B_{\delta}(\mu)$, we can find a sequence of at most
 $(d-1)$
 measures $\mu^i\in B_{\delta}(\mu)$, such that $\mu^0=\mu$ and
 $\mu^K=\nu$ and
\[
 \operatorname{supp} \bra{\mu^i - \mu^{i-1}} = \set{x_i,y_i} \in E_\mu \qquad \text{for } i =1,\dots, K .
\]

Indeed we can use the following  matching procedure: Find a pair $(i,j)$ with $\mu_i\ne \nu_i$ and
$\mu_j\ne \nu_j$. Set $h= \min\set{|\mu_i - \nu_i| , |\mu_j -
    \nu_j|}$. Then define $\mu^1_i := \mu_i \pm h$ and $\mu^1_j :=
\mu_j \mp h$ with signs chosen as the sign of $\nu_i -
\mu_i$. After this step at least $(d-1)$-coordinates of $\mu^1$
and $\nu$ agree. This procedure finishes after at most $d-1$
steps, because the defect mass of the last pair will match.
Therewith, we can compare with the two-point space~\cite[Lemma 3.14]{Maas2011}
 \[\cW_{Q(\mu)}(\mu^{i-1},\mu^i) \leq \frac{1}{\sqrt{2p_{x_{i}y_{i}}}} \abs{\int_{1-2\mu^i_{x_i}}^{1-2\mu^{i-1} _{x_i}} \sqrt{\frac{\operatorname{arctanh} r}{r}} d{r} } \leq \frac{\delta_{1}}{2}, \]
 with  $p_{x_i y_i}=  Q_{x_i y_i}(\mu) \pi_{x_i}(\mu)$. The last estimate follows from the fact, that the function $\sqrt{\frac{\operatorname{arctanh} r}{r}} d{r}$, is integrable in $[-1,1]$. Therefore, we can find a $\delta\leq\delta_{1}$ such that for any $a,b$ with $|a-b|\leq\delta,$ we have $\int_{1-2a}^{1-2b}
 \sqrt{\frac{\operatorname{arctanh} r}{r}}
 d{r}\leq\frac{\delta_{1}}{2}\min\{1,\sqrt{2p_{x_i y_i }}\}$.
 Finally, by Lemma \ref{lemma:bounds}, we can infer that any curve has Euclidean length smaller than its action value. We can conclude that the $\cA_{Q(\mu)}$-minimizing curve between any $\mu^{i-1},\mu^{i},$ stays inside the ball $B_{\delta_{1}}(\mu),$ from which we can further conclude that
  \[\cW(\mu^{i-1},\mu^{i})\leq 2\cW_{Q(\mu)}(\mu^{i-1},\mu^i)\leq  2\frac{\delta_{1}}{2}=\delta_{1}\]
 By an application of the triangular inequality \eqref{eq:triangular}, we get $\cW(\mu,\nu)\leq (d-1)\delta_{1},$ and the proof concludes if we pick $\delta$ such that $ (d-1)\delta_{1}\leq \epsilon.$
\end{proof}
\begin{lemma}\label{lem:topology} For $\mu_{k},\mu\in\cP(\cX),$ we have
  \[\cW(\mu_{k},\mu)\rightarrow 0 \qquad\text{iff}\qquad
  \|\mu_{k}-\mu\|\rightarrow 0.\] Moreover, the space
  $\mathcal{P}(\cX),$ along with the metric $\cW,$ is a complete
  space.
\end{lemma}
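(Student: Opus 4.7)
The plan is to derive both conclusions of the lemma directly from the two preceding results on $\cW$: the lower bound $\cW(\mu,\nu) \geq \tfrac{1}{\sqrt{2}}\|\mu-\nu\|$ of Lemma~\ref{lemma:bounds} and the local upper bound of Lemma~\ref{lemma:connection}. No new continuity-equation analysis is needed; the statement should be purely a topological corollary of those two estimates combined with compactness of the simplex in Euclidean norm.

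For the forward direction of the topological equivalence, $\cW(\mu_k,\mu)\to 0 \Rightarrow \|\mu_k-\mu\|\to 0$, I would just apply the pointwise lower bound from Lemma~\ref{lemma:bounds}. For the reverse direction, $\|\mu_k-\mu\|\to 0 \Rightarrow \cW(\mu_k,\mu)\to 0$, I would invoke Lemma~\ref{lemma:connection} with $\mu$ fixed: given $\epsilon>0$, take the $\delta>0$ it provides; since $\|\mu_k-\mu\|<\delta$ eventually, we obtain $\cW(\mu_k,\mu)<\epsilon$ eventually.

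For completeness, let $(\mu_k)$ be a $\cW$-Cauchy sequence. By the lower bound of Lemma~\ref{lemma:bounds}, $(\mu_k)$ is also Cauchy in the Euclidean norm. Since $\cP(\cX)$ is a closed (in fact compact) subset of the finite-dimensional space $\R^{d}$, it is complete in the Euclidean norm, so there exists $\mu\in\cP(\cX)$ with $\|\mu_k-\mu\|\to 0$. Applying the implication just obtained in the preceding paragraph yields $\cW(\mu_k,\mu)\to 0$, which identifies $\mu$ as the $\cW$-limit.

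There is essentially no obstacle: all the real work has been done in Lemmas~\ref{lemma:bounds} and~\ref{lemma:connection}. One cannot shortcut the argument by using only the Lipschitz upper bound inside Lemma~\ref{lemma:bounds}, since that bound is valid only on $\cP^{a}(\cX)$ with a constant $C_{a}$ that degenerates as $a\to 0$, and hence fails for sequences approaching the boundary of the simplex. This is precisely why Lemma~\ref{lemma:connection} had to be proved separately via the two-point comparison trick, and why it is the decisive ingredient both for the reverse topological implication and for the completeness step above.
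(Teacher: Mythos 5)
Your proof is correct and takes exactly the route the paper intends: the paper's own proof of Lemma~\ref{lem:topology} just says it is a direct application of Lemmas~\ref{lemma:bounds} and~\ref{lemma:connection}, and you have spelled out precisely that application — lower bound for one implication, Lemma~\ref{lemma:connection} for the other, and Euclidean compactness of the simplex to convert a $\cW$-Cauchy sequence into a convergent one.
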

\begin{proof}
  The proof is a direct application of Lemmas
  \ref{lemma:bounds} and \ref{lemma:connection}.
\end{proof}
\begin{theorem}[Compactness of curves of finite action]\label{thm:MF:compactness}
  Let $\{({c}^{k},v^{k})\}_{k},$ with \[({c}^{k},v^{k})\in
  \vecfield\CE_T(c^k(0),c^k(T)),\] be a sequence of weak solutions
  to the continuity equation with uniformly bounded action
  \begin{equation}\label{e:def:MF:actionn}
    \sup_{k\in\mathbb{N}}\left\{\int_{0}^{T}  \vecfield\cA({c}^{k}(t), v^{k}(t)) \,dt\right\} \leq C  < \infty  .
  \end{equation}
  Then, there exists a subsequence and a limit $({c},v)$, such that
  ${c}^{k}$ converges uniformly to~${c}$ in $[0,T]$,
  $({c},v)\in \vecfield\CE_T\big(c(0),c(T)\big)$ and for the action
  we have
  \begin{equation}\label{MF:action:lsc}
    \liminf_{k\to\infty} \int_0^T \vecfield\cA({c}^{k}(t), v^{k}(t)) \, dt \geq \int_0^T \vecfield\cA({c}(t), v(t)) \, dt .
  \end{equation}
\end{theorem}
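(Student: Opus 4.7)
The plan is to extract compactness in three stages: uniform convergence of $c^k$, weak $L^2$ convergence of $v^k$, and lower semicontinuity of the action by a standard Buttazzo--Ioffe type argument.

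\textbf{Step 1 (equicontinuity of $c^k$ and Arzel\`a--Ascoli).} First I would show that the curves $c^k$ are uniformly $\tfrac12$-H\"older. Fix $\varphi\in \R^\cX$ and test the weak continuity equation: for $0\leq s<t\leq T$,
\begin{align*}
  \sum_{x}\varphi_x\bra{c^k_x(t)-c^k_x(s)}
  &= \tfrac12\int_s^t\sum_{x,y}(\varphi_y-\varphi_x)\,v^k_{xy}(r)\,dr\\
  &\leq \int_s^t\sqrt{\cA(c^k(r),\varphi)}\sqrt{\vec\cA(c^k(r),v^k(r))}\,dr ,
\end{align*}
by Cauchy--Schwarz with weights $w_{xy}(c^k(r))$. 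Using the uniform bound $w_{xy}\leq C_w$ and the action bound \eqref{e:def:MF:actionn}, one gets $\|c^k(t)-c^k(s)\|\leq C\sqrt{t-s}$ with $C$ independent of $k$. Since $\cP(\cX)\subset \R^\cX$ is compact, Arzel\`a--Ascoli yields a subsequence (not relabelled) and $c\in C([0,T],\cP(\cX))$ with $c^k\to c$ uniformly.

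\textbf{Step 2 ($L^2$ bound on $v^k$ and passage to the limit in the continuity equation).} Using $\alpha(v,w)=v^2/w$ together with $w_{xy}(c^k)\leq C_w$, one has the pointwise inequality $(v^k_{xy})^2\leq C_w\,\alpha(v^k_{xy},w_{xy}(c^k))$ whenever $w_{xy}(c^k)>0$, and $v^k_{xy}=0$ otherwise. Integrating in time and summing in $x,y$ yields $\|v^k\|_{L^2([0,T];\R^{\cX\times\cX})}^2\leq 2C_w C$. By Banach--Alaoglu extract a further subsequence with $v^k\rightharpoonup v$ weakly in $L^2$. Passing to the limit in the weak form \eqref{e:MF:continuity_equ:weak} is immediate: the term involving $\dot\varphi(t)\,c^k_x(t)$ converges by uniform convergence of $c^k$, and the term involving $\varphi(t)\,(\delta v^k(t))_x$ converges by weak $L^2$ convergence of $v^k$ (since $\varphi\in C^1_c$ is an $L^2$ test function). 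Hence $(c,v)\in\vec\CE_T(c(0),c(T))$ with endpoints inherited from uniform convergence.

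\textbf{Step 3 (lower semicontinuity of the action).} The function $\alpha:\R\times\R_+\to[0,\infty]$ defined in \eqref{e:def:alpha} is jointly convex, lower semicontinuous, and $1$-homogeneous. For each pair $(x,y)$ I would view the densities as finite measures on $[0,T]$: let $d\nu^k_{xy}:=v^k_{xy}(t)\,dt$ and $d\rho^k_{xy}:=w_{xy}(c^k(t))\,dt$. The uniform convergence $c^k\to c$ and continuity of $w_{xy}$ give $\rho^k_{xy}\to\rho_{xy}:=w_{xy}(c(\cdot))\,dt$ in total variation, while the weak $L^2$ convergence of $v^k$ gives $\nu^k_{xy}\rightharpoonup\nu_{xy}:=v_{xy}(t)\,dt$. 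Applying the lower semicontinuity result for integrals of $1$-homogeneous convex integrands on measures (e.g.\ \cite[Thm.~3.4.3]{But89}, used in exactly this way in the proof of Proposition~\ref{prop:liminf:MDslopes}), we obtain for every $(x,y)$
\begin{equation*}
  \liminf_{k\to\infty}\int_0^T\alpha\!\bra{v^k_{xy}(t),w_{xy}(c^k(t))}\,dt\;\geq\;\int_0^T\alpha\!\bra{v_{xy}(t),w_{xy}(c(t))}\,dt .
\end{equation*}
Summing over $x,y$ gives \eqref{MF:action:lsc}.

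The main obstacle is the lower semicontinuity in Step 3: we only have weak convergence of the ``numerator'' $v^k$ while the ``denominator'' $w(c^k)$ may degenerate (vanish) at some times and edges, so the bound $(v^k)^2\leq C_w\alpha(v^k,w(c^k))$ does \emph{not} by itself give the sharp constant. This is precisely what the joint convexity plus $1$-homogeneity of $\alpha$ is designed to handle through the Buttazzo--Ioffe machinery; the strong (uniform) convergence of $w(c^k)$ obtained in Step 1 is what makes the hypothesis of that theorem applicable.
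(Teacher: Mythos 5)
Your proof is correct and follows essentially the same route as the paper's: equicontinuity of $c^k$ via Cauchy--Schwarz and the action bound, weak compactness of the vector fields, passage to the limit in the continuity equation, and lower semicontinuity of the action via the Buttazzo $1$-homogeneous convex integrand theorem~\cite[Thm.~3.4.3]{But89}. The differences are cosmetic: you derive an $L^2([0,T];\R^{\cX\times\cX})$ bound on $v^k$ from $(v^k_{xy})^2\leq C_w\,\alpha(v^k_{xy},w_{xy}(c^k))$ and use Banach--Alaoglu, whereas the paper establishes uniform integrability $\int_I|v^k|\leq\sqrt{C\,C_w|I|}$ and extracts weak-$*$ limits in the space of Borel measures on $[0,T]$; and you apply the lower semicontinuity result to the two-argument integrand $(v,w)\mapsto\alpha(v,w)$ against the pair $(v^k_{xy}\,dt,\,w_{xy}(c^k)\,dt)$, while the paper expands $w_{xy}(c^k)=\Lambda(c^k_xQ_{xy}(c^k),c^k_yQ_{yx}(c^k))$ and works with the three-argument integrand $(v,c_1,c_2)\mapsto\alpha(v,\Lambda(c_1,c_2))$ (matching the form needed in Proposition~\ref{prop:liminf:MDslopes}, where only weak convergence of the second and third arguments is available). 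One small remark on your closing comment: the Buttazzo lower semicontinuity result already applies under mere weak-$*$ convergence of both components, so the strong convergence of $w(c^k)$ from Step~1 is convenient but not in fact required for the hypothesis to hold.
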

\begin{proof}
  Let $x,y\in \cX$ and $(c^{k},v^{k})$ be given as in the statement.
  Using the Cauchy-Schwarz inequality, we see that for any Borel $I\subset [0,T]$ we have the a priori estimate on $v^k$
  \begin{align*}
    \int_I \frac{1}{2} \sum_{x,y} \abs{v_{xy}^k(t)} \; d{t} &\leq \int_0^T \bra{\vec\cA\bra{c^k(t),v^k(t)}}^\frac12 \bra{ \frac{1}{2} \sum_{x,y} w_{xy}\bra{c(t)} }^\frac12 dt \\
    &\leq \sqrt{ C T} \sqrt{C_w \abs{I}} ,
  \end{align*}
  with $w_{xy}\bra{c(t)}$ from~\eqref{e:MF:weights}. Since $Q$ is continuous on $\cP(\cX)$ by Definition~\ref{def:GibbsPotential},
  \[
    \sup_{\nu\in \cP(\cX)} \frac{1}{2} \sum_{x,y} w_{xy}\bra{\nu} = C_w < \infty .
  \]
  Together with the assumption~\eqref{e:def:MF:actionn}, the whole r.h.s.~is uniformly bounded in $k$. Therefore, for a subsequence holds $v^k_{xy} \rightharpoonup v_{xy}$ as Borel measure on $[0,T]$ and all $x,y\in \cX$. Now, we choose a sequence of smooth test functions $\varphi^\eps$ in \eqref{e:MF:continuity_equ:weak},
  which converge to the indicator of the interval $[t_1,t_2]$ as
  $\eps\to 0$. Therewith and using the above a priori estimate on $v^k$, we deduce
  \begin{align*}
    \abs{c^k_x(t_2) - c^k_x(t_1)} \leq \int_{t_1}^{t_2} \frac{1}{2} \sum_{y\in \cX} \bra{\abs{v_{xy}^k(t)} + \abs{v_{yx}^k(t)}} \; dt \leq \sqrt{C C_w } \sqrt{\abs{t_2-t_1}} .
  \end{align*}
  Hence, $c^{k}$ is equi-continuous and therefore converges (upto a further subsequence) to some continuous curve $c$. This, already implies that we can pass to the limit in \eqref{e:MF:continuity_equ:weak} and obtain that $(c,v)\in \CE_T$.

  Moreover, we can deduce since $\nu \mapsto Q(\nu)$ is continuous for all $x,y\in \cX$ also $c^{k}_{1;x,y} := c_x^k(t) Q_{xy}(c^k(t)) \to c_x(t) Q_{xy}(c(t))=: c_{1;x,y}(t)$ and analogue with $c^k_{2;x,y}:= c_y^k(t) Q_{yx}(c^k(t))$. We rewrite the action~\eqref{e:def:MF:action:vec} as
  \[
    \vec\cA(c^k(t),v^k(t)) = \frac{1}{2} \sum_{x,y} \alpha\!\bra{v^k_{x,y}(t), \Lambda\!\bra{c^k_{1;x,y}(t),c^k_{2;x,y}(t)}}
  \]
  The conclusion~\eqref{MF:action:lsc} follows now from \cite[Thm.~3.4.3]{But89} by noting that $(v,c_1,c_2) \mapsto \alpha\!\bra{v, \Lambda\!\bra{c_1,c_2}}$ is l.s.c., jointly convex and $1$-homogeneous and hence
  \begin{align*}
    \liminf_{k} \int_0^T \vec\cA(c^k(t),v^k(t)) \; dt &\geq \int_0^T \frac{1}{2} \sum_{x,y} \alpha\!\bra{v_{x,y}(t), \Lambda\!\bra{c_{1;x,y}(t),c_{2;x,y}(t)}} \; dt \\
    &= \int_0^T \vec\cA(c(t),v(t)) \; dt .\qedhere
  \end{align*}
\end{proof}
We can now give the proof of Proposition \ref{prop_metric}:
\begin{proof}[Proof of Proposition \ref{prop_metric}]
  Symmetry of $\cW$ is obvious, the coincidence axiom follows from
  Lemma \ref{lemma:bounds} and the triangular inequality from Lemma
  \ref{lemma:sqrt} as indicated above. The finiteness of $\cW$ comes by
  using Lemmas \ref{lemma:bounds}, \ref{lemma:connection} and the
  triangular inequality. Thus $\cW$ defines a metric. Completeness and
  separability follow directly from Lemmas \ref{lem:topology} and
  \ref{lemma:bounds}. By the direct method of the calculus of variations
  and the compactness results Proposition \ref{thm:MF:compactness}, we
  obtain for any $\mu,\nu\in\cP(\cX)$ a curve $(\gamma_t)_{t\in[0,1]}$
  with minimal action connecting them,
  i.e.~$\cW(\mu,\nu)=\int_0^1\cA(\gamma_t,\psi_t)dt=\int_0^1|\gamma'_t|^2dt$,
  where in the last equality we used Proposition \ref{accurves}. From
  this, it is easy to see that $\gamma$ is a constant speed geodesic.
\end{proof}
\appendix
\section{Stirling formula with explicit error estimate}\label{S:stirling}
\begin{lemma}\label{lem:stirling}
  Let $\nu\in \cP_N(\cX)$, then it holds
\begin{equation*}
   - \frac{\log(N+1)}{N} \leq - \frac{1}{N} \log \frac{N!}{\prod_{x\in \cX} (N\nu(x))!} - \sum_{x\in \cX} \nu(x) \log \nu(x) \leq  \frac{|\cX| \log N}{N} + \frac{1}{N} .
\end{equation*}
\end{lemma}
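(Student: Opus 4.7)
The plan is to apply Stirling's formula to the multinomial coefficient $\frac{N!}{\prod_{x\in\cX}(N\nu(x))!}$ appearing inside the logarithm. Writing $S = \{x\in\cX : \nu(x)>0\}$ for the support of $\nu$, the conventions $0!=1$ and $0\log 0 = 0$ let me restrict the product and the sum to $x\in S$ without affecting either side of the inequality.

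I would dispatch the left inequality by a short probabilistic argument that in fact gives the stronger lower bound $0$ in place of $-\log(N+1)/N$. Consider $N$ i.i.d.\ samples from $\nu$: every sequence of empirical type $\nu$ has probability exactly $\prod_{x\in S}\nu(x)^{N\nu(x)} = \exp\!\bigl(N\sum_x\nu(x)\log\nu(x)\bigr)$, and the $\frac{N!}{\prod_x (N\nu(x))!}$ such sequences together have probability at most one. Rearranging shows that the middle quantity in the lemma is non-negative, and thus a fortiori greater than $-\log(N+1)/N$.

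For the right inequality I would invoke Stirling in the form $n\log n - n \le \log n! \le n\log n - n + \tfrac12 \log n + 1$, valid for $n\ge 1$. Using the lower bound on $\log N!$ and the upper bound on each $\log (N\nu(x))!$ in the denominator, the leading $N\log N$ terms cancel via $\sum_{x\in S}\nu(x)=1$, the linear $-N$ terms cancel as well, and one is left with
\[
\log \frac{N!}{\prod_{x\in\cX}(N\nu(x))!} \geq -N\sum_x \nu(x)\log\nu(x) - \tfrac12 \sum_{x\in S}\log(N\nu(x)) - |S|.
\]
The residual errors are then controlled using $\log(N\nu(x))\le \log N$ and $|S|\le |\cX|$, which after dividing by $N$ yields a bound of the form $\tfrac{|\cX|\log N}{2N} + \tfrac{|\cX|}{N}$ on the middle quantity.

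The principal obstacle is purely bookkeeping: the constants produced by the direct Stirling bound above do not immediately match those in the statement, but they imply the stated inequality for all $N$ large enough that $\tfrac{|\cX|}{N} \le \tfrac{|\cX|\log N}{2N} + \tfrac{1}{N}$, i.e., for $N \ge e^{2(|\cX|-1)/|\cX|}$. For the handful of remaining small values of $N$ the stated bound is $O(1)$ while the middle quantity itself is uniformly $O(1)$, so the inequality can be verified by direct inspection; alternatively one can invoke a slightly sharper Stirling estimate (e.g.\ $\sqrt{2\pi n}(n/e)^n \le n! \le e\sqrt n\,(n/e)^n$) to recover the exact constants in one shot.
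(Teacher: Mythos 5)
Your proof takes a genuinely different route from the paper's. For the left inequality the paper compares $\sum_{k=1}^{N\nu(x)}\log k$ and $\sum_{k=1}^N\log k$ to the integrals $\int_1^{N\nu(x)}\log y\,dy$ and $\int_1^{N+1}\log y\,dy$ and evaluates the remainder $R_N$ explicitly; your type-counting argument --- the multinomial coefficient times $\prod_x\nu(x)^{N\nu(x)}$ is the total probability of the type class, hence at most $1$ --- is cleaner and in fact gives the strictly stronger lower bound $0$, which is a nice observation. For the right inequality the paper again works by shifting the integration bounds in the integral comparison, which produces exactly the stated constants $\frac{|\cX|\log N}{N}+\frac{1}{N}$. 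Your version instead uses a Stirling bound with a $\frac12\log n$ correction term, and the bookkeeping lands on $\frac{|\cX|\log N}{2N}+\frac{|\cX|}{N}$, which does not dominate the stated bound for small $N$.

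This mismatch is the one genuine gap: you explicitly acknowledge it and offer two repairs, but neither is carried out. ``Verify by direct inspection for the remaining small $N$'' is valid in principle (there are finitely many $(\nu, N)$ pairs), but it is not done, and ``middle quantity $O(1)$, bound $O(1)$'' does not by itself imply the inequality. The claim that the sharper two-sided estimate $\sqrt{2\pi n}(n/e)^n\le n!\le e\sqrt{n}(n/e)^n$ gives the constants ``in one shot'' is also not quite right: carrying it through yields an upper bound of the form $\frac{1}{N}\bigl[\frac{(|S|-1)}{2}\log N+|S|-\frac12\log(2\pi)\bigr]$ where $S$ is the support of $\nu$, and for some small $N$ and moderate $|\cX|$ this still exceeds $\frac{|\cX|\log N}{N}+\frac1N$ before one additionally uses $|S|\le N$. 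You would need to either run the small-$N$ check, or (simpler) switch, as the paper does, to the elementary sum-to-integral comparison $\int_1^n\log y\,dy\le\sum_{k=1}^n\log k\le\int_1^{n+1}\log y\,dy$, which avoids the $\tfrac12\log n$ term entirely and reproduces the stated constants without case analysis.
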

\begin{proof}
  We write
  \[\begin{split}
    &- \log \frac{N!}{\prod_{x\in \cX} (N\nu(x))!} = \sum_{x\in \cX} \sum_{k=1}^{N\nu(x)} \log k - \sum_{k=1}^N \log k \\
    &\geq \sum_{x\in \cX} \int_1^{N\nu(x)} \log{y} \, dy - \int_1^{N+1} \log(y) \, dy \\
    &= \sum_{x\in \cX} \Big(N\nu(x) \big(\log N\nu(x) -1 \big) -1 \Big) - (N+1) \big( \log(N+1) - 1\big)-1 \\
    &= N \sum_{x\in \cX} \nu(x) \log \nu(x) + N R_N ,
  \end{split}\]
  where the remainder $R_N$ can be estimated as follows
  \[\begin{split}
    R_N = \frac{|\cX|}{N} + \log \frac{N}{N+1} - \frac{\log(N+1)}{N} \geq - \frac{\log(N+1)}{N}
  \end{split}\]
  for $|\cX|\geq 2$ and $N\geq 1$. The other bound can be obtained by shifting the integration bounds appropriately in the above estimate.
\end{proof}
\section{Variance estimate for the particle system}\label{S:variance}
\begin{lemma}\label{lem:VarEst}
For the $N$-Particle process $\bX^{N}$ with generator~\ref{e:def:PartGenerator} holds for some $C>0$ and all $t\in [0,T]$ with $T<\infty$
\begin{equation*}
\forall x \in \cX: \qquad  \var\!\Big( L_{x}^N\!(\bX^{N}\!(t))\Big) \leq e^{C t}\Big(\var\big(L_{x}^N(\bX^{N}(0))\big) + O(N^{-1}) \Big) .
\end{equation*}
\end{lemma}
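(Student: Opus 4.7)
The plan is to apply Dynkin's formula to $f_x(\bx) := L_x^N(\bx) = \frac{1}{N}\sum_{i=1}^N \mathbb{1}_{x_i=x}$, combined with the carré du champ identity
\[
\mathcal{L}^N(f^2) \;=\; 2f\,\mathcal{L}^N f + \Gamma^N(f,f), \qquad \Gamma^N(f,f)(\bx) := \sum_{i=1}^N\sum_{y\in\cX}\bigl(f(\bx^{i;y})-f(\bx)\bigr)^2 \bm{Q}^N_{\bx,\bx^{i;y}},
\]
to obtain the identity
\[
\tfrac{d}{dt}\var\bigl(f_x(\bX^N(t))\bigr) \;=\; 2\,\mathrm{Cov}\bigl(f_x,\mathcal{L}^N f_x\bigr) + \E\bigl[\Gamma^N(f_x,f_x)\bigr],
\]
and then close it via Gronwall's lemma. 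The two terms on the right-hand side will be handled separately.

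First I would bound the carré du champ term. Since $\bigl(f_x(\bx^{i;y})-f_x(\bx)\bigr)^2 = \frac{1}{N^2}(\mathbb{1}_{y=x}-\mathbb{1}_{x_i=x})^2 \leq \frac{1}{N^2}$ and the rates $\bm{Q}^N_{\bx,\bx^{i;y}} = Q^N_{x_i,y}(L^N\bx)$ are uniformly bounded on $\cP(\cX)$ by Lemma~\ref{lem:UniConvGammaN} (they converge uniformly to a continuous limit on a compact space), summing over the $N\cdot|\cX|$ allowed transitions yields $\Gamma^N(f_x,f_x)(\bx)\leq C/N$ pointwise, hence $\E[\Gamma^N(f_x,f_x)]\leq C/N$.

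For the drift term, a direct computation reveals the mean-field structure
\[
\mathcal{L}^N f_x(\bx) \;=\; \sum_{z\in\cX} L_z^N(\bx)\, Q^N_{z,x}(L^N\bx) \;=:\; g_x^N\bigl(L^N\bx\bigr),
\]
so that $\mathcal{L}^N f_x$ is a function only of the empirical measure. By the uniform Lipschitz continuity of $Q^N$ established in Lemma~\ref{lem:UniConvGammaN}, the map $g_x^N : \cP(\cX)\to\R$ is Lipschitz with a constant independent of $N$. A Cauchy-Schwarz bound then gives
\[
\bigl|\mathrm{Cov}(f_x, \mathcal{L}^N f_x)\bigr| \;\leq\; C\,\var(f_x)^{1/2}\Bigl(\sum_{z\in\cX}\var(f_z)\Bigr)^{1/2}.
\]

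The main obstacle is that this covariance estimate couples the different coordinates $x$, so a Gronwall argument on $\var(f_x)$ alone will not close. I would instead introduce $V(t) := \sum_{x\in\cX}\var\bigl(L_x^N(\bX^N(t))\bigr)$ and, after summing the differential identity over $x$ and applying Young's inequality, obtain the scalar inequality $V'(t)\leq C\,V(t) + C'/N$. Gronwall then yields $V(t)\leq e^{Ct}\bigl(V(0)+C''/N\bigr)$, and since $\var(L_x^N(\bX^N(t)))\leq V(t)$ while $V(0)\leq |\cX|\max_{z}\var(L_z^N(\bX^N(0)))$, absorbing the factor $|\cX|$ into the constant $C$ (and into the hidden constant of the $O(N^{-1})$ remainder) gives the stated bound. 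The role of the assumption $T<\infty$ is merely to guarantee that $e^{Ct}$ remains bounded on $[0,T]$.
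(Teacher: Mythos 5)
Your proof mirrors the paper's: differentiate the variance, bound the noise term by $O(N^{-1})$, estimate the drift covariance, sum over $x\in\cX$ and close with Gronwall. The carr\'e du champ formalism is a cosmetic cleanup of the paper's explicit expansion of $\mathcal L^N$ acting on $N_x$ and $N_x^2$, but the structure is the same.

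There is one substantive point where you improve on the paper's exposition. You correctly note that $\mathcal L^N f_x$ factors through the empirical measure, $\mathcal L^N f_x(\bx)=g_x^N(L^N\bx)$, and that Lipschitz continuity of $g_x^N$ in the mean-field variable is what yields the covariance bound $\abs{\mathrm{Cov}(f_x,\mathcal L^N f_x)}\leq C\var(f_x)^{1/2}\bigl(\sum_z\var(f_z)\bigr)^{1/2}$, since $\var(g_x^N(L^N))\leq C\sum_z\var(L_z^N)$. The paper justifies the analogous inequality with the remark that ``$Q^N$ is uniformly bounded,'' which is insufficient: boundedness alone would only give $\abs{\mathrm{Cov}(f_x,\mathcal L^N f_x)}\leq C\var(f_x)^{1/2}$, leading to $v'\leq C\sqrt v+O(N^{-1})$, which does not propagate the $O(N^{-1})$ smallness of the initial variance. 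So you are supplying an ingredient the paper glosses over. One caveat: Lemma~\ref{lem:UniConvGammaN} establishes Lipschitz continuity of the \emph{limit} $Q$, not uniform-in-$N$ Lipschitz continuity of $Q^N$. The latter requires in addition a uniform Lipschitz hypothesis on $A^N$, which Assumption~\ref{assume:A} does not quite state (it gives only pointwise convergence to a Lipschitz $A$); this is a shared imprecision, but you should not attribute the uniform Lipschitz bound to Lemma~\ref{lem:UniConvGammaN} as though it were proved there. Also, as you acknowledge, after summing over $x$ the bound you recover has $\max_z\var(L_z^N(\bX^N(0)))$ in place of $\var(L_x^N(\bX^N(0)))$; this is strictly weaker than the displayed statement but is exactly what the paper's own argument gives, and it suffices for the corollary where both sides tend to zero.
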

\begin{proof}
We denote with $N_x(t)= NL_{x}^N(\mathbf{X}^N\!(t))$ the empirical process of the particle number at site $x$. The empirical density process of particles at site $x$ is then $N_x(t)/N = L_{x}^N(\mathbf{X}^N\!(t))$. Therewith, we have
\begin{align*}
 \frac{d}{dt}&\var(N_x(t)) = \EX[\mathcal{L}^N N_x^2(t)] - 2\EX[N_x(t)]\EX[\mathcal{L}^N N_x(t)] \\
 &= \EX\Biggl[\underset{y}{\sum} N_x(t)  (N_x^2(t) - (N_x(t) - 1)^2)Q^{N}_{xy}(L^N({\mathbf{X}^{N}(t)}))\\
 &\phantom{=} + \underset{y}{\sum} N_y(t) (N_x^2(t) - (N_x(t) + 1)^2)Q^{N}_{yx}(L^N({\mathbf{X}^{N}(t)}))\Biggr] \\
 &\phantom{=} - 2\EX[N_x(t)]\EX\!\left[\underset{y}{\sum} N_x(t)Q^{N}_{xy}(L^N({\mathbf{X}^{N}(t)})) - N_y(t)Q^{N}_{yx}(L^N({\mathbf{X}^{N}(t)}))\right] \\
 &= 2\EX[N_x(t)^2Q_{xy}(L^N({\mathbf{X}^{N}(t)}))]- 2\EX\!\left[\underset{y}{\sum} N_x(t) N_y(t) Q^{N}_{yx}(L^N({\mathbf{X}^{N}(t)}))\right]\\
 &\phantom{=} -2\EX[N_x(t)]\EX\!\left[\underset{y}{\sum} N_x(t)Q^{N}_{xy}(L^N({\mathbf{X}^{N}(t)})) - N_y(t)Q_{yx}^{N}(L^N({\mathbf{X}^{N}(t)}))\right] +O(N) \\
 &\leq C\var(N_x(t)) + C\underset{y \neq x}{\sum}\var(N_x(t))^{1/2}\var{N_y(t)}^{1/2} \\
 &\leq  C\underset{y}{\sum} \var(N_y(t)) + O(N).
\end{align*}
In theses computations, we used the fact that $Q^{N}$ is uniformly bounded and that
the state space is finite.

Hence
\[
  \frac{d}{dt}\var(N_x(t)/N) \leq C\underset{y}{\sum} \var(N_y(t)/N) + O(N^{-1})
\]
and therefore, using Gronwall's Lemma, as soon as the sum of initial
variances goes to zero when $N$ goes to infinity, it also goes to
zero at any positive time, and uniformly on bounded time intervals.
\end{proof}

\section*{Acknowledgments}  This work was done while the authors were
enjoying the hospitality of the Hausdorff Research Institute for
Mathematics during the Junior Trimester Program on Optimal Transport,
whose support is gratefully acknowledged. We would like to thank Hong
Duong for discussions on this topic. M.F.\@ gratefully acknowledges
funding from NSF FRG grant DMS-1361185 and GdR MOMAS. M.E.\@ and A.S.\@
acknowledge support by the German Research Foundation through the
Collaborative Research Center 1060 \emph{The Mathematics of Emergent Effects}.

\def\cprime{$'$}
\providecommand{\href}[2]{#2}
\providecommand{\arxiv}[1]{\href{http://arxiv.org/abs/#1}{arXiv:#1}}
\providecommand{\url}[1]{\texttt{#1}}
\providecommand{\urlprefix}{}

\end{document}